\providecommand{\U}[1]{\protect\rule{.1in}{.1in}}
\newtheorem{theorem}{Theorem}[section]
\newtheorem{definition}[theorem]{Definition}
\newtheorem{lemma}[theorem]{Lemma}
\newtheorem{proposition}[theorem]{Proposition}
\newtheorem{remark}[theorem]{Remark}
\newenvironment{proof}[1][Proof]{\noindent \textbf{#1.} }{\  $\Box$}
\numberwithin{equation}{section}
\begin{document}

\title{Maximum principle for optimal control of stochastic evolution equations with
recursive utilities}
\author{Guomin Liu \thanks{School of Mathematical Sciences, Nankai University,
Tianjin, China. gmliu@nankai.edu.cn. Research supported by National Natural Science Foundation of
China (No. 12201315 and No. 12071256), China Postdoctoral
Science Foundation (No. 2020M670960)  and Natural Science Foundation of Shandong Province for
Excellent Youth Scholars (No. ZR2021YQ01).}
\and and \quad\quad Shanjian Tang\thanks{School of Mathematical Sciences, Fudan
University, Shanghai, China. sjtang@fudan.edu.cn. Research supported by
National Key R\&D Program of China (No. 2018YFA0703900) and National Natural
Science Foundation of China (No. 12031009). }}
\date{}
\maketitle

\begin{abstract}
We consider the optimal control problem of stochastic evolution equations in a
Hilbert space under a recursive utility, which is described as the solution of
a backward stochastic differential equation (BSDE). A very general maximum
principle is given for the optimal control, allowing the control domain not to
be convex and the generator of the BSDE to vary with the second unknown
variable $z$. The associated second-order adjoint process is characterized as
a unique solution of a conditionally expected operator-valued backward
stochastic integral equation.

\medskip\noindent\textbf{Keywords. } Stochastic evolution equations, nonconvex
control domain, recursive optimal control, maximum principle, operator-valued
backward stochastic integral equations. \smallskip

\noindent\textbf{AMS 2020 Subject Classifications.} 93E20, 60H15, 60G07,
49K27, 60H20.

\end{abstract}

\vspace{-10pt}

\section{Introduction}

In this paper, we consider the optimal control problem of stochastic evolution
equations (SEEs)
\begin{equation}%
\begin{cases}
{d}x(t) & =[A(t)x(t)+a(t,x(t),u(t))]{d}t+[B(t)x(t)+b(t,x(t),u(t))]dw({t}%
),\quad t\in[0,T],\\
x(0) & =x_{0}\in H:\text{\textrm{a Hilbert space}}%
\end{cases}
\label{eq0-1}%
\end{equation}
with a recursive utility which solves the backward stochastic differential
equation (BSDE)
\begin{equation}
y(t)=h(x(T))+\int_{t}^{T}k(s,x(s),y(s),z(s),u(s))ds-\int_{t}^{T}z(s){d}w(s).
\label{eq0-22}%
\end{equation}
Here, $w(\cdot)$ is a Brownian motion, $(A(t),B(t))$ are random
linear unbounded operators for $t\in\lbrack0,T]$, $(a,b,h,k)$ are nonlinear
functions and $u(\cdot)$ is a control process, and taking values in a given
metric space. The objective is to minimize the initial value $y(0)$ as a
functional of the control:
\begin{equation}
J(u(\cdot)):=y(0). \label{Myeq0-1}%
\end{equation}
The notion of a recursive utility in continuous time was introduced by Duffie
and Epstein \cite{DE92} and generalized to the form of (\ref{eq0-22}) in Peng
\cite{Pe93} and El Karoui, Peng and Quenez \cite{KPQ97}. When $k$ is invariant
with $(y,z)$, by taking expectation on both sides of (\ref{eq0-22}), we get
\[
J(u(\cdot))=\mathbb{E}[h(x(T))+\int_{0}^{T}k(t,x(t),u(t))dt],
\]
and the stochastic optimal control problem is reduced to the conventional one,
which has been addressed in \cite{DM13,FHT13,LZ14}.

Pontryagin's maximum principle for optimally controlled ordinary differential
equations is a milestone in the modern optimal control theory. By now, the
maximum principle for optimally controlled finite-dimensional systems is quite
complete. The maximum principle for a general stochastic optimal control
problem was finally given by Peng \cite{Pe90}, by introducing a second-order
adjoint process which solves a matrix-valued BSDE. In the extension to
incorporate the recursive utility, an essential difficulty is how to derive
the second-order variational equation of the recursive BSDE (\ref{eq0-2}). It
was listed as an open problem by Peng \cite{Pe98}. Until recently, Hu
\cite{Hu-17} completely solved this problem by developing a clever Taylor's
expansion, so to reduce the order of the variation of the recursive BSDEs.

To formulate the counterpart of the infinite-dimensional stochastic optimal
control system, a crucial issue is the characterization of the second-order
adjoint process $P$, which takes values in the space $\mathfrak{L}(H)$ of all
bounded linear operators from $H$ to $H$. Since the operator space
$\mathfrak{L}(H)$ is not a (separable) Hilbert space, the dynamics of the
second adjoint process could not be described by a conventional BSDE as in the
finite-dimensional case. In the existing maximum principles for the
conventional stochastic optimal control problem, the second-order process $P$
is given in various ways. L\"{u} and Zhang \cite{LZ14,LZ18} utilize the notion of
transposition solutions in the context of real-valued equations, assuming the coefficients, such as the terminal condition and the generator of the equation, to be strongly measurable (hence separably valued; see  \cite[Theorem 2.1]{KR81}) and the  space $L^{2}(\mathcal{F}_T)$ to be separable. Derived from the limit of the quadratic terms
in the variational calculation of the maximum principle, Du and Meng
\cite{DM13} and Fuhrman, Hu and Tessitore \cite{FHT13} define $P$ through a
stochastic bilinear form. In both approaches, no dynamics of
the second adjoint process are given. On the other hand, Guatteri and Tessitore
\cite{GT05,GT14} characterize $P$ using the mild solution of an operator-valued BSDE. They impose either the Hilbert-Schmidt assumption on the coefficients (which can be relaxed only for a suitable limit of solutions with such data, referred to as a generalized solution) 
or a rather restrictive
regularity condition on the unbounded operators. Similarly, Stannat and Wessels
\cite{SW21} employ a function-valued backward SPDE when the coefficients (of the
system and the cost functional) depend on the state variables in a Nemytskii
manner. However, their diffusion coefficient contains no unbounded operator
(this also happens in \cite{FHT13,GT05,GT14,LZ14,LZ18}) and is further
required to have a very high regularity when the space dimension is greater
than one (see   \cite[Remark 4.3]{SW21}).

The aim of this paper is to study the maximum principle for the optimal
control problem (\ref{Myeq0-1}) of infinite-dimensional stochastic system with
recursive utilities. To characterize the dynamics of the second-order adjoint
process $P,$ we propose a notion of conditionally expected operator-valued
backward stochastic integral equations (BSIEs in short) to serve as the
second-order adjoint equations. The formulation of our BSIEs is very naturally
inspired by the variation of constants method for operator-valued SPDEs (see
Remark \ref{Rm2-2} (i)). Under mild conditions, the existence and uniqueness
of solutions to the operator-valued BSIEs is obtained in virtue of a concept
of aggregated-defined operator-valued conditional expectation and a
contraction mechanism, without imposing additional separability assumption on
the coefficients.

On the other hand, the It\^{o}'s formulas (or the duality formulas) for $\langle
P({t})x({t}),x({t})\rangle$ in the above mentioned works of characterizing $P$
require that the homogeneous terms in both equations of $P$ and $x$ are dual
(in a proper sense) so that they can cancel out in the final duality formula,
which are not satisfied for our recursive utility context. In this paper, to
obtain the maximum condition, we shall derive a more general It\^{o}'s formula
in which some homogeneous terms in both the equations of $P$ and $x$ remain to
appear (see Theorem \ref{Myth3-7} and Remark \ref{Re2-1} (iii)), by using the
explicit formula of linear BSDEs and an approximation argument. Furthermore, unlike the finite-dimensional or non-recursive case, the variational equations of utility BSDE (\ref{eq0-2}) involve additional terms $\langle p(\cdot),{B}(\cdot)x^{1,\rho}(\cdot)\rangle$ and $\langle p(\cdot),{B}(\cdot)x^{2,\rho}(\cdot)\rangle$, which incorporate  the unbounded operator $B$ and thus cannot be handled using the usual estimates for $p$ and $x^{1,\rho},x^{2,\rho}$ in $H$. Here, $p$ is the first-order adjoint process, $x^{1,\rho}$ and $x^{2,\rho}$ are the solutions of the first- and second-order variational equations for the state equation (\ref{eq0-1}), respectively. To overcome this difficulty, we deduce and utilize an $L^\beta$-estimate of $p$ in the space $V$  (see  (\ref{Myeq4-22}), the proof of Proposition \ref{Myth4-9} and Remark \ref{Rm-B}).

The rest of this paper is organized as follows. In Section 2, we introduce a
conditionally expected operator-valued BSIE and further give its It\^{o}'s
formula. We formulate our infinite-dimensional optimal control problem under a
recursive utility and derive the maximum principle in Section 3. The appendix
includes the proofs of some important technical results used in the paper.

\section{Conditionally expected operator-valued BSIEs}

In this section, we give an existence and uniqueness result for a
conditionally expected operator-valued backward stochastic integral equation
(BSIE). It will be used to characterize the dynamics of the second-order
adjoint process in the maximum principle for optimally controlled stochastic
evolution equations (SEEs).

Let $(\Omega,\mathcal{F},\mathbb{P})$ be a probability space. Fix a terminal
time $T>0,$ let $\mathbb{F}:=\{\mathcal{F}_{t}\}_{0\leq t\leq T}$ be a
filtration on $(\Omega,\mathcal{F},\mathbb{P})$ satisfying the usual
conditions. We denote by $\Vert\cdot\Vert_{X}$ the norm on a Banach space $X$.
By $\mathfrak{L}(X;Y)$, we denote the space of all bounded linear operators
from $X$ to another Banach space $Y$, equipped with the operator norm. We
write $\mathfrak{L}(X)$ for $\mathfrak{L}(X;X).$

Let $H$ be a separable Hilbert space with inner product $\langle\cdot
,\cdot\rangle$. We adopt the standard identification viewpoint of
$\mathfrak{L}(H;\mathbb{R})=H.$ By $M^{\ast}$, we denote the adjoint of an
operator $M.$ We denote by $I_{d}$ the identity operator on $H.$

Given a sub-$\sigma$-algebra $\mathcal{G}$ of $\mathcal{F}$. For $\alpha
\geq1,$ we denote by $L^{\alpha}(\mathcal{G},H)$ the space of $H$-valued
$\mathcal{G}$-measurable mapping $y$ with norm $\Vert y\Vert_{L^{\alpha
}(\mathcal{G},H)}=\{\mathbb{\mathbb{E}}[\Vert y\Vert_{H}^{\alpha}]\}^{\frac
{1}{\alpha}}$, and by $L_{\mathbb{F}}^{\alpha}(0,T;H)$ (resp. $L_{\mathbb{F}%
}^{2,\alpha}(0,T;H)$) the space of $H$-valued progressively measurable
processes $y(\cdot)$ with norm $\Vert y\Vert_{L_{\mathbb{F}}^{\alpha}%
(0,T;H)}=\{\mathbb{\mathbb{E}}[\int_{0}^{T}\Vert y(t)\Vert_{H}^{\alpha}%
{d}t]\}^{\frac{1}{\alpha}}$ (resp. $\Vert y\Vert_{L_{\mathbb{F}}^{2,\alpha
}(0,T;H)}=\{\mathbb{\mathbb{E}}[(\int_{0}^{T}\Vert y(t)\Vert_{H}^{2}%
{d}t)^{\frac{\alpha}{2}}]\}^{\frac{1}{\alpha}}$). We write $L^{\alpha
}(\mathcal{G})$, $L_{\mathbb{F}}^{\alpha}(0,T)$ and $L_{\mathbb{F}}^{2,\alpha
}(0,T)$ for $L^{\alpha}(\mathcal{G},\mathbb{R})$, $L_{\mathbb{F}}^{\alpha
}(0,T;\mathbb{R})$ and $L_{\mathbb{F}}^{2,\alpha}(0,T;\mathbb{R})$, respectively.

We say a mapping $Z:\Omega\rightarrow\mathfrak{L}(H)$ is weakly $\mathcal{G}%
$-measurable if for each $(u,v)\in H\times H,$ $\langle Zu,v\rangle
:\Omega\rightarrow\mathbb{R}$ is $\mathcal{G}$-measurable. A process
$Y:\Omega\times\lbrack0,T]\rightarrow\mathfrak{L}(H)$ is said to be weakly
progressively measurable (weakly adapted, resp.) if for each $(u,v)\in H\times
H,$ the process $\langle Yu,v\rangle:\Omega\times\lbrack0,T]\rightarrow$
$\mathbb{R}$ is progressively measurable (adapted, resp.).

By $L_{w}^{\alpha}(\mathcal{G},\mathfrak{L}(H)),$ we denote the space of
$\mathfrak{L}(H)$-valued weakly $\mathcal{G}$-measurable mapping $F$ with norm
$\Vert F\Vert_{L_{w}^{\alpha}(\mathcal{G},\mathfrak{L}(H))}%
=\{\mathbb{\mathbb{E}}[\Vert F\Vert_{\mathfrak{L}(H)}^{\alpha}]\}^{\frac
{1}{\alpha}}$. Since there is a countable dense subset $V$ of $H$ such that%
\[
\Vert F(\omega)\Vert_{\mathfrak{L}(H)}=\sup_{\substack{(u,v)\in V\times
V,\\\Vert u\Vert_{H},\Vert v\Vert_{H}\leq1}}|\langle F(\omega)u,v\rangle|
,\quad\omega\in\Omega,
\]
the real-valued function $\omega\mapsto\Vert F(\omega)\Vert_{\mathfrak{L}(H)}%
$\ is $\mathcal{G}$-measurable and the norm $\Vert F\Vert_{L_{w}^{\alpha
}(\mathcal{G},\mathfrak{L}(H))}$ is well-defined. Similarly, we denote by
$L_{\mathbb{F},w}^{\alpha}(0,T;\mathfrak{L}(H))$ (resp. $L_{\mathbb{F}%
,w}^{2,\alpha}(0,T;\mathfrak{L}(H))$) the space of $\mathfrak{L}(H)$-valued
weakly progressively measurable processes $F(\cdot)$ with norm $\Vert
F\Vert_{L_{\mathbb{F},w}^{\alpha}(0,T;\mathfrak{L}(H))}=\{\mathbb{\mathbb{E}%
}[\int_{0}^{T}\Vert F(t)\Vert_{\mathfrak{L}(H)}^{\alpha}{d}t]\}^{\frac
{1}{\alpha}}$ (resp. $\Vert F\Vert_{L_{\mathbb{F},w}^{2,\alpha}%
(0,T;\mathfrak{L}(H))}=\{\mathbb{\mathbb{E}}[(\int_{0}^{T}\Vert F(t)\Vert
_{\mathfrak{L}(H)}^{2}{d}t)^{\frac{\alpha}{2}}]\}^{\frac{1}{\alpha}}$). From
standard arguments, we can see that $L_{w}^{\alpha}(\mathcal{G},\mathfrak{L}%
(H))$, $L_{\mathbb{F},w}^{\alpha}(0,T;\mathfrak{L}(H))$ and $L_{\mathbb{F}%
,w}^{2,\alpha}(0,T;\mathfrak{L}(H))$ are all Banach spaces. In the following,
we shall not distinguish two random variables if they coincide $P$-a.s$.$ and
two processes if one is a modification of the other, unless other stated.

\begin{remark}
\upshape{In general, there are mainly three kinds of measurability notions for Banach space-valued random
variables: strongly measurable (can be approximated by a sequence of simple measurable
functions), measurable (the preimage of each Borel set is measurable) and
weakly measurable (the composition with any element in the dual space or in a
proper subspace (called a norming
subspace; see \cite[p. 2]{JN07}) of the dual space
is a real-valued measurable function). These three notions are equivalent in a separable Banach space (see  \cite[Theorem 1.5 and Prop.
1.8]{JN07}) and it is not necessary to indicate the notion of measurability in the above for $H$-valued random mappings.  Moreover, the notion of \textquotedblleft measurable"
does not work well in the non-separable case since even the sum of two
measurable functions may not be measurable (see \cite{JN57}). The operator space $\mathfrak{L}(H)$ is not separable in general (even when $H$ is; see
\cite[Solution 99]{Ha82}), so these notions are quite different for it. We adopt the above
weak measurability notion for $\mathfrak{L}(H)$-valued mappings in which the
test functions are from $H\times H.$ Note that $H\times H$ can be regarded as
a  subset of the dual of $\mathfrak{L}(H)$ by taking $f_{u,v}(z)=\langle z(u),v\rangle$, for
$z\in\mathfrak{L}(H)$ and $(u,v)\in H\times H$ and $\text{span}(H\times H)$ is a norming subspace of $\mathfrak{L}(H)$. Thus this weak
measurability notion is still one kind of  standard forms.}
\end{remark}

Denote by $L_{w}$ the weak $\sigma$-algebra generated by all the sets in the
form of
\[
\{z\in\mathfrak{L}(H):\langle zu,v\rangle\in A \},\quad u,v\in H,\ A\in
\mathcal{B}(\mathbb{R}).
\]
Then it is straightforward to verify that $Z:\Omega\rightarrow\mathfrak{L}(H)$
is weakly $\mathcal{G}$-measurable if and only if it is measurable from
$(\Omega,\mathcal{G})$ to $(\mathfrak{L}(H),L_{w})$ (see also \cite{Ed77}).
Similarly, $Y:\Omega\times\lbrack0,T]\rightarrow\mathfrak{L}(H)$ is weakly
progressively measurable if and only if it is measurable from $(\Omega
\times[0,T],\mathcal{P})$ to $(\mathfrak{L}(H),L_{w})$, where $\mathcal{P}$ is
the progressive $\sigma$-algebra on $\Omega\times[0,T].$

\subsection{Conditional expectation for operator-valued random variables}

The operator-valued BSIE is based on a notion of conditional expectations for
random variables taking values in the operator space $\mathfrak{L}(H)$. As is
well known, the classical theory on the conditional expectations for Banach or
Hilbert space-valued random variables requires the separability of the value
spaces (see, e.g., \cite{DZ92,Ro18}). But in general the operator space
$\mathfrak{L}(H)$ is not separable and thus the above-mentioned result does
not apply. In this subsection we shall construct a new kind of conditional
expectations for operator-valued random variables by exploring the
separability of $H$, rather than that of $\mathfrak{L}(H)$ (which is the case
when the classical Banach or Hilbert space-valued conditional expectation
theory applies to this situation).

Recall that for any Banach space $X,$ we have the identity (see \cite{Co12}
for more details)
\[
\mathfrak{L}_{2}(H\times H;X)=\mathfrak{L}(H;\mathfrak{L}(H;X))
\]
by identifying $\tilde{\varphi}\in\mathfrak{L}_{2}(H\times H;X)$ with
$\varphi\in\mathfrak{L}(H;\mathfrak{L}(H;X))$ through
\[
\tilde{\varphi}(u,v):=\varphi(u)v,\quad\forall(u,v)\in H\times H,
\]
where $\mathfrak{L}_{2}(H\times H;X)$ is the space of all bounded bilinear
operators from $H\times H$ to $X$, equipped with the operator norm. Thus,
\[
\mathfrak{L}(H;\mathfrak{L}(H;L^{1}(\mathcal{G})))=\mathfrak{L}_{2}(H\times
H;L^{1}(\mathcal{G})).
\]
From $\mathfrak{L}(H;\mathbb{R})=H,$ we also have the isometry
\[
\mathfrak{L}_{2}(H\times H):=\mathfrak{L}_{2}(H\times H;\mathbb{R}%
)=\mathfrak{L}(H;\mathfrak{L}(H;\mathbb{R}))=\mathfrak{L}(H;H)=\mathfrak{L}%
(H).
\]
This new space $\mathfrak{L}_{2}(H\times H)$ is easier to work with than the
previous $\mathfrak{L}(H)$ and is more essential for us to construct the
conditional expectations. So we shall state the construction (of the
conditional expectations) for $\mathfrak{L}_{2}(H\times H)$-valued random
variables, and the original $\mathfrak{L}(H)$ form can be obtained directly
via the above isometry after this procedure completes.

\subsubsection{Existence of the conditional expectation}

We adopt the same weak measurability meaning for $\mathfrak{L}_{2}(H\times
H)$-valued random variables according to the isometry $\mathfrak{L}%
_{2}(H\times H)=\mathfrak{L}(H).$ That is, a mapping $Z:\Omega\rightarrow
\mathfrak{L}_{2}(H\times H)$ is called $\mathcal{G}$-weakly measurable if for
each $(u,v)\in H\times H,$ $Z(u,v):\Omega\rightarrow\mathbb{R}$ is
$\mathcal{G}$-measurable. The definition of weakly progressive measurability
and weakly adaptedness for $\mathfrak{L}_{2}(H\times H)$-valued processes is
similar. In the same manner, we define $L_{w}^{\alpha}(\mathcal{G}%
,\mathfrak{L}_{2}(H\times H))$ as the space of $\mathfrak{L}_{2}(H\times
H)$-valued weakly $\mathcal{G}$-measurable mapping $F$ with norm $\Vert
F\Vert_{L_{w}^{\alpha}(\mathcal{G},\mathfrak{L}_{2}(H\times H))}%
=\{\mathbb{\mathbb{E}}[\Vert F\Vert_{\mathfrak{L}_{2}(H\times H)}^{\alpha
}]\}^{\frac{1}{\alpha}}$ and have $L_{w}^{\alpha}(\mathcal{G},\mathfrak{L}%
_{2}(H\times H))=L_{w}^{\alpha}(\mathcal{G},\mathfrak{L}(H))$.

It is very natural to define the conditional expectation for $\mathfrak{L}%
_{2}(H\times H)$-valued random variables, i.e., for an $\mathfrak{L}%
_{2}(H\times H)$-valued $Y$, to find an $\mathfrak{L}_{2}(H\times H)$-valued
$\mathbb{E}[Y|\mathcal{G}]$ as its conditional expectation. But in the
infinite-dimensional case, the quantity to be conditionally expected in the
formulation of the BSIEs later lies in a larger space $\mathfrak{L}%
_{2}(H\times H;L^{1}(\mathcal{F}))$ (see (\ref{Myeq2-12})). So in the
following we shall define conditional expectations for this larger class (the
conditional expectation is still $\mathfrak{L}_{2}(H\times H)$-valued), and
the $\mathfrak{L}_{2}(H\times H)$-valued situation can be regarded as a
special case.

We first verifies that
\begin{equation}
\label{Myeq2-7}L_{w}^{1}(\mathcal{G},\mathfrak{L}_{2}(H\times H))\subset
\mathfrak{L}_{2}(H\times H;L^{1}(\mathcal{G})).
\end{equation}
Indeed, for any $Y\in L_{w}^{1}(\mathcal{G},\mathfrak{L}_{2}(H\times H))$ and
$(u,v)\in H\times H,$ from the definition of weak measurability, we see that
$Y(u,v)\in\mathcal{G}$. Moreover,%
\[
\mathbb{\mathbb{E}}[|Y(u,v)|]\leq\mathbb{\mathbb{E}}[\Vert Y\Vert
_{\mathfrak{L}_{2}(H\times H)}\Vert u\Vert_{H}\Vert v\Vert_{H}%
]=\mathbb{\mathbb{E}}[\Vert Y\Vert_{\mathfrak{L}_{2}(H\times H)}]\Vert
u\Vert_{H}\Vert v\Vert_{H}<\infty,
\]
and thus the mapping $(u,v)\longmapsto Y(u,v)$ is bounded bilinear from
$H\times H$ to $L^{1}(\mathcal{G}).$ So $Y\in\mathfrak{L}(H\times
H;L^{1}(\mathcal{G})).$

The main difference between the elements in $L_{w}^{1}(\mathcal{G}%
,\mathfrak{L}_{2}(H\times H))$ and $\mathfrak{L}_{2}(H\times H;L^{1}%
(\mathcal{G}))$ is that the definition and bilinearity for the one in the
first space is pointwise or say, independent the effect arguments $(u,v)\in
H\times H$, but the definition and bilinearity for the one in the second space
is only in a rough way and may depend on its effect arguments $(u,v)\in
H\times H.$ To be more detailed, given any $Y\in L_{w}^{1}(\mathcal{G}%
,\mathfrak{L}_{2}(H\times H)),$ for each (or at least $P$-a.s.) $\omega$, we
have $Y(\omega)\in\mathfrak{L}_{2}(H\times H),$ which is also
\[
Y(\alpha_{1}u_{1}+u_{2},v_{1})(\omega)=\alpha_{1}Y(u_{1},v_{1})(\omega
)+Y(u_{2},v_{1})(\omega),\ Y(u_{1},\alpha_{2}v_{1}+v_{2})(\omega)=\alpha
_{2}Y(u_{1},v_{1})(\omega)+Y(u_{1},v_{2})(\omega),
\]
for every $(u_{1},v_{1}),(u_{2},v_{2})\in H\times H$ and $\alpha_{1}%
,\alpha_{2}\in\mathbb{R}$ (The negligible set is universal for all
$(u_{1},v_{1}),(u_{2},v_{2})\in H\times H$ and $\alpha_{1},\alpha_{2}%
\in\mathbb{R)}$. Whereas for $Y\in\mathfrak{L}_{2}(H\times H;L^{1}%
(\mathcal{G})),$ since we do not distinguish the $P$-a.s. equal elements
in\ $L^{1}(\mathcal{G})$, we can only have that, for any $(u_{1},v_{1}%
),(u_{2},v_{2})\in H\times H$ and $\alpha_{1},\alpha_{2}\in\mathbb{R}$,
\[
Y(\alpha_{1}u_{1}+u_{2},v_{1})=\alpha_{1}Y(u_{1},v_{1})+Y(u_{2},v_{1}%
)\ \ \text{and}\ \ Y(u_{1},\alpha_{2}v_{1}+v_{2})=\alpha_{2}Y(u_{1}%
,v_{1})+Y(u_{1},v_{2}),\quad P\text{-a.s.}%
\]
(The negligible set depends on $(u_{1},v_{1}),(u_{2},v_{2})\in H\times H$ and
$\alpha_{1},\alpha_{2}\in\mathbb{R)}$.

For $Y\in\mathfrak{L}_{2}(H\times H;L^{1}(\mathcal{F})),$ we call an
$\mathfrak{L}_{2}(H\times H)$-valued weakly $\mathcal{G}$-measurable mapping
$Z$ the conditional expectation of $Y$ with respect to $\mathcal{G}$, denoted
by $\mathbb{E}[Y|\mathcal{G}]$, if for each $(u,v)\in H\times H$,
\begin{equation}
\label{Myeq2-15}Z(u,v)=\mathbb{E}[Y(u,v)|\mathcal{G}],\quad P\text{-a.s.}%
\end{equation}
meaning that $Z$ coincides with the classical conditional expectation at all
the test points $(u,v)$.

In general, for $Y\in\mathfrak{L}_{2}(H\times H;L^{1}(\mathcal{F})),$ we
always have that the mapping defined by $H\times H\ni(u,v)\longmapsto
\mathbb{E}[Y(u,v)|\mathcal{G}]$ (we can still denote it $\mathbb{E}%
[Y|\mathcal{G}]$ by a slight abuse of the notations) belongs to $\mathfrak{L}%
_{2}(H\times H;L^{1}(\mathcal{G})).$ Indeed,%
\[
\mathbb{\mathbb{E[}}|\mathbb{E}[Y(u,v)|\mathcal{G}]|]\leq\mathbb{\mathbb{E}%
}[|Y(u,v)|]\leq C\Vert u\Vert_{H}\Vert v\Vert_{H},
\]
where the last inequality is due to $Y\in\mathfrak{L}_{2}(H\times
H;L^{1}(\mathcal{F})).$ But whether some of its versions can be operator
$\mathfrak{L}_{2}(H\times H)$-valued so that it is the conditional expectation
we are searching for, is not known. To find such a version can be regarded as
an aggregation problem of constructing a better version among all the
equivalent admissible rough classes, which will be discussed in the next subsection.

We generally have the following existence and uniqueness theorem on the
conditional expectation of an operator-valued random variable.

\begin{theorem}
\label{Myth2-17} Let $Y\in\mathfrak{L}_{2}(H\times H;L^{1}(\mathcal{F}))$.
Then the conditional expectation $\mathbb{E}[Y|\mathcal{G}]$ exists and is
integrable (i.e., $\mathbb{E}[Y|\mathcal{G}]\in L_{w}^{1}(\mathcal{G}%
,\mathfrak{L}_{2}(H\times H))$) if and only if the mapping $(u,v)\longmapsto
\mathbb{E}[Y(u,v)|\mathcal{G}]\in\mathfrak{L}_{2}(H\times H;L^{1}%
(\mathcal{G}))$ satisfies the domination condition
\begin{equation}
|\mathbb{E}[Y(u,v)|\mathcal{G}]|\leq g\Vert u\Vert_{H}\Vert v\Vert_{H},\quad
P\text{-a.s.},\ \forall(u,v)\in H\times H, \label{Myeq2-29}%
\end{equation}
for some $0\leq g\in L^{1}(\mathcal{G})$. Moreover, such an $\mathbb{E}%
[Y|\mathcal{G}]$ is unique (up to $P$-a.s. equality) and satisfies
\begin{equation}
\Vert\mathbb{E}[Y|\mathcal{G}]\Vert_{\mathfrak{L}_{2}(H\times H)}\leq g,\quad
P\text{-a.s.} \label{Myeq2-11}%
\end{equation}

\end{theorem}

Before going to the proof, we present the following remarks.

\begin{remark}
\upshape{In the above definition of conditional expectations, we make use of a similar idea of test as the one for $H$-valued random variables (see, e.g., \cite[Definition 2.4]{KR81} and  \cite[Definition 2.1]{Ro18}), but  apply it to a more general bilinear
situation. By similar
arguments (see the proofs of Theorems \ref{Myth1-1} and \ref{Myth2-17}), this $\mathfrak{L}_{2}(H\times H)$-valued conditional expectation
holds for the more general $k$-linear operator (i.e., $\mathfrak{L}_{k}(H_{1}\times H_{2}\times\cdots\times H_{k})$-valued) case with different
separable Hilbert spaces $H_{j},j\leq k$, for $k=1,2,3,\cdots,$  and when $k=1$, it constructs the conditional expectation for $H$-valued
random variables in a slightly new way. Indeed, at this case, from
$H=\mathfrak{L}(H;\mathbb{R}),$ the relationship (\ref{Myeq2-7}) becomes
$L^{1}(\mathcal{G},H)=L^{1}(\mathcal{G},\mathfrak{L}(H;\mathbb{R}))\subset\mathfrak{L}(H;L^{1}(\mathcal{G}))$ (we delete the subscript $w$ (for the first and second spaces)
since the measurability and weak measurability are the same now due to the
separability of $H$); the conditional expectation for $Y\in\mathfrak{L}(H;L^{1}(\mathcal{F}))$ is a $H$-valued  $\mathcal{G}$-measurable mapping $Z$
satisfying $\langle Z,u\rangle=Z(u)=\mathbb{E}[Y(u)|\mathcal{G}]$ $P$-a.s.,
for all $u\in H;$ the above theorem reads: for $Y\in\mathfrak{L}(H;L^{1}(\mathcal{F})),$ the conditional expectation $\mathbb{E}[Y|\mathcal{G}]\in L^{1}(\mathcal{G},H)$ exists iff\[
|\mathbb{E}[Y(u)|\mathcal{G}]|\leq g\Vert u\Vert_{H},\quad P\text{-a.s.},\ \forall u\in H,
\]
for some $0\leq g\in L^{1}(\mathcal{G}),$ $\mathbb{E}[Y|\mathcal{G}]$ is
unique and satisfies $\Vert\mathbb{E}[Y|\mathcal{G}]\Vert_{H}\leq g,\
P$-a.s. This generalizes the classical result for the conditional expectation
of $H$-valued random variables since $Y$ does not need to be true
$H$-valued.
}
\end{remark}

\begin{remark}
\upshape{
From the proofs latter, the condition $Y\in\mathfrak{L}_{2}(H\times H;L^{1}(\mathcal{F}))$ in the definition of the conditional expectation and  in Theorem \ref{Myth2-17}
can be weaken to $(u,v)\longmapsto\mathbb{E}[Y(u,v)|\mathcal{G}]\in
\mathfrak{L}_{2}(H\times H;L^{1}(\mathcal{G}))$. Note that, if the conditional expectation $\mathbb{E}[Y|\mathcal{G}]\in L_{w}^{1}(\mathcal{G},\mathfrak{L}_{2}(H\times H))$ exists, this
new condition also holds (see (\ref{Myeq2-7})), so it (plus the domination condition) is the
weakest condition to guarantee the existence of integrable $\mathfrak{L}_{2}(H\times H)$-valued
conditional expectations. This generalization also holds for the
$k$-linear operator case, and in particular, when $k=1,$ it provides a necessary and sufficient characterization for the
existence of integrable $H$-valued conditional expectations.}
\end{remark}

\subsubsection{An aggregation theorem and proof of Theorem \ref{Myth2-17}}

For a mapping $G\in\mathfrak{L}_{2}(H\times H;L^{1}(\mathcal{G})),$ by a
version of $G,$ we mean another $G^{\prime}:H\times H\longmapsto
L^{1}(\mathcal{G})$ satisfying $G(u,v)=G^{\prime}(u,v)$ in $L^{1}%
(\mathcal{G})$ (which is also, $P$-a.s.), for each $(u,v)\in H\times H$. It is
easy to check that $G^{\prime}\in\mathfrak{L}_{2}(H\times H;L^{1}%
(\mathcal{G})).$

The construction of the conditional expectation is based on the following
aggregation theorem for operator-valued random variables in the space of
bilinear mappings.

\begin{theorem}
\label{Myth1-1} The mapping $G\in\mathfrak{L}_{2}(H\times H;L^{1}%
(\mathcal{G}))$ admits a version $\bar{G}\in L_{w}^{1}(\mathcal{G}%
,\mathfrak{L}_{2}(H\times H))$ if and only if the following the domination
condition holds: there exists some $0\leq g\in L^{1}(\mathcal{G})$ such that
\begin{equation}
|G(u,v)|\leq g\Vert u\Vert_{H}\Vert v\Vert_{H},\quad P\text{-a.s.}%
,\ \forall(u,v)\in H\times H. \label{Myeq1-8}%
\end{equation}
Moreover, such an $\mathfrak{L}_{2}(H\times H)$-valued version is unique (up
to $P$-a.s. equality) and satisfies
\begin{equation}
\Vert\bar{G}\Vert_{\mathfrak{L}_{2}(H\times H)}\leq g,\quad P\text{-a.s.}
\label{Myeq2-10}%
\end{equation}

\end{theorem}

\begin{remark}
\upshape{The proof is based on an  idea of extension from a countable
dense subset of indexes,
which is motivated from \cite{El79}, see also \cite{DM13,FHT13,Tang15}.}
\end{remark}

\begin{proof}
Let $\{e_{i}\}_{i=1}^{\infty}$ be a countable basis of $H$.

\textit{Step 1: an auxiliary deterministic result.} For any given real values
$\{a_{ij}\}_{i,j=1}^{\infty},$ define
\[
F(e_{i},e_{j}):=a_{ij}, \quad\text{for}\ i,j\geq1.
\]
Then $F$ can be extended uniquely to be an element in $\mathfrak{L}%
_{2}(H\times H),$ which we still denote by $F$, if and only if there exists
some constant $C>0$ such that
\[
|\sum_{i=1}^{n}\sum_{j=1}^{m}\alpha_{i}\beta_{j}a_{ij}|\leq C\Vert\sum
_{i=1}^{n}\alpha_{i}e_{i}\Vert_{H}\Vert\sum_{j=1}^{m}\beta_{j}e_{j}\Vert
_{H},\quad\text{for all} \ \alpha_{i},\beta_{j}\in\mathbb{Q}\text{, and
integers }n,m\geq1.
\]
Moreover, this extension satisfies $\Vert F\Vert_{\mathfrak{L}_{2}(H\times
H)}\leq C. $

Indeed, we take a dense linear subspace with field $\mathbb{Q}$ of $H$
\[
V:=\{\sum_{i=1}^{n}\alpha_{i}e_{i}:\alpha_{i}\in\mathbb{Q},\ n\geq1\}.
\]
We define on $V\times V$
\[
F(\sum_{i=1}^{n}\alpha_{i}e_{i},\sum_{j=1}^{m}\beta_{j}e_{j}):=\sum_{i=1}%
^{n}\sum_{j=1}^{m}\alpha_{i}\beta_{j}F(e_{i},e_{j}).
\]
It is easy to check that $F$ is a well-defined bilinear mapping with field
$\mathbb{Q}$ on $V\times V$ and $|F(u,v)|\leq C\Vert u\Vert_{H}\Vert
v\Vert_{H}$, for all $(u,v)\in V\times V.$ Then by the continuous extension
theorem (see, e.g., \cite[Lemma 2.4]{Ku06}), $F$ can be extended to be an
element in $\mathfrak{L}_{2}(H\times H)$ satisfying $|F(u,v)|\leq C\Vert
u\Vert_{H}\Vert v\Vert_{H}$, for all $(u,v)\in H\times H,$ which is also
$\Vert F\Vert_{\mathfrak{L}_{2}(H\times H)}\leq C.$

Now we show that such an extension from basis $\{e_{i}\}_{i=1}^{\infty}$ is
unique. Let $F^{1}$, $F^{2}$ be two such extensions. Then $F^{1}(e_{i}%
,e_{j})=F^{2}(e_{i},e_{j})$ for each $i,j,$ which implies $F^{1}%
(u,v)=F^{2}(u,v)$ for all $(u,v)\in V\times V$ by the bilinearity. Thus from
the continuity of the extension, we have $F^{1}(u,v)=F^{2}(u,v)$ for all
$(u,v)\in H\times H.$ That is, $F^{1}=F^{2}$.

The converse of the assertion is trivial.

\textit{Step 2: proof of the theorem.} We fix any versions of $G(e_{i},e_{j})$
for $i,j\geq1.$ For each given $\omega$, we define the effect of $\bar
{G}(\cdot,\cdot)(\omega)$ on the basis:
\begin{equation}
\bar{G}(e_{i},e_{j})(\omega):=a_{ij}^{\omega}:=G(e_{i},e_{j})(\omega),\quad
i,j\geq1. \label{Myeq1-1}%
\end{equation}
Since the elements in $V\times V$ is countable, we have from (\ref{Myeq1-8})
that, for $P$-a.s. $\omega,$
\begin{equation}%
\begin{split}
|\sum_{i=1}^{n}\sum_{j=1}^{m}\alpha_{i}\beta_{j}a_{ij}^{\omega}| =  &  |
G(\sum_{i=1}^{n}\alpha_{i}e_{i},\sum_{j=1}^{m}\beta_{j}e_{j})(\omega)|\leq
g(\omega)\Vert\sum_{i=1}^{n}\alpha_{i}e_{i}\Vert_{H}\Vert\sum_{j=1}^{m}%
\beta_{j}e_{j}\Vert_{H},\\
&  \ \ \ \ \ \ \ \ \ \ \ \ \ \ \ \ \ \ \ \ \ \ \ \ \ \text{for all }(u,v)
=(\sum_{i=1}^{n}\alpha_{i}e_{i},\sum_{j=1}^{m}\beta_{j}e_{j})\in V\times V.
\end{split}
\label{Myeq1-2}%
\end{equation}
We denote by $\Omega_{0}$ the $\mathcal{G}$-measurable set of full measure in
which the inequality (\ref{Myeq1-2}) holds. For each fixed $\omega\in
\Omega_{0},$ we can apply Step 1 to extend $\bar{G}$ to be an element in
$\mathfrak{L}_{2}(H\times H),$ which satisfies $\Vert\bar{G}(\omega
)\Vert_{\mathfrak{L}_{2}(H\times H)}\leq g(\omega).$ On the exception set
$\Omega\setminus\Omega^{0}$, let $\bar{G}$ take the zero element in
$\mathfrak{L}_{2}(H\times H)$. Thus we obtain a $\bar{G}\in L_{w}%
^{1}(\mathcal{G},\mathfrak{L}_{2}(H\times H))$ such that (\ref{Myeq2-10}) holds.

Now we prove $\bar{G}$ is a version of $G$. From the construction of $\bar
{G},$ we have $\bar{G}(e_{i},e_{j})=G(e_{i},e_{j})$ $P\text{-a.s.}$, for each
$i,j$. Assume $(u,v)=(\sum_{i=1}^{\infty}\alpha_{i}e_{i},\sum_{j=1}^{\infty
}\beta_{j}e_{j}),$ for $\alpha_{i},\beta_{j}\in\mathbb{R}$, $i,j\geq1.$ Then%
\[
\bar{G}(\sum_{i=1}^{n}\alpha_{i}e_{i},\sum_{j=1}^{m}\beta_{j}e_{j})=\sum
_{i=1}^{n}\sum_{j=1}^{m}\alpha_{i}\beta_{j}\bar{G}(e_{i},e_{j})=\sum_{i=1}%
^{n}\sum_{j=1}^{m}\alpha_{i}\beta_{j}G(e_{i},e_{j})=G(\sum_{i=1}^{n}\alpha
_{i}e_{i},\sum_{j=1}^{m}\beta_{j}e_{j}),\quad P\text{-a.s.}%
\]
Letting $n,m\rightarrow\infty$ (on a subsequence if necessary), from the
continuity of $\bar{G}$ and $G$ ($\bar{G}$ is continuous from $H\times H$ to
$\mathbb{R}$ pointwise, and $G$ is continuous from $H\times H$ to
$L^{1}(\mathcal{G})$ by (\ref{Myeq1-8}))$,$ we obtain
\[
\bar{G}(u,v)=G(u,v),\quad P\text{-a.s.}%
\]

To see the uniqueness, consider two $\mathfrak{L}_{2}(H\times H)$-valued
versions $\bar{G}^{1}$ and $\bar{G}^{2}$ of $G.$ For each $(u,v),$ we have
$\bar{G}^{1}(u,v)=G(u,v)=\bar{G}^{2}(u,v)$, $P\text{-a.s.}$ Thus, $\bar{G}%
^{1}(e_{i},e_{j})=\bar{G}^{2}(e_{i},e_{j})$ for all $i,j$, $P\text{-a.s}$.
From the uniqueness result in Step 1, we obtain that $\bar{G}^{1}=\bar{G}^{2}%
$, $P\text{-a.s.}$

Taking $g(\omega)=\Vert\bar{G}(\omega)\Vert_{\mathfrak{L}_{2}(H\times H)}$ for
$\omega\in\Omega$, we have the converse of the theorem.
\end{proof}

\begin{proof}
[Proof of Theorem \ref{Myth2-17}]We define $G(u,v):=\mathbb{E}%
[Y(u,v)|\mathcal{G}],$ for $(u,v)\in H\times H$. In view of Theorem
\ref{Myth1-1}, there is a $Z\in L_{w}^{1}(\mathcal{G},\mathfrak{L}_{2}(H\times
H))$ such that for each $(u,v)\in H\times H,$
\begin{equation}
Z(u,v)=G(u,v)=\mathbb{E}[Y(u,v)|\mathcal{G}],\quad P\text{-a.s.,}
\label{Myeq2-18}%
\end{equation}
and satisfies (\ref{Myeq2-11}) according to (\ref{Myeq2-10}). It is the
expectation of $Y$ conditioned on $\mathcal{G}$ and is unique by the
uniqueness result of $\mathfrak{L}_{2}(H\times H)$-valued versions in Theorem
\ref{Myth1-1}. On the contrary, assume there exists such a conditional
expectation $\mathbb{E}[Y|\mathcal{G}]\in L_{w}^{1}(\mathcal{G},\mathfrak{L}%
_{2}(H\times H)).$ Then for any $(u,v)\in H\times H$, from the definition of
the conditional expectation that
\[
\mathbb{E}[Y|\mathcal{G}](u,v)=\mathbb{E}[Y(u,v)|\mathcal{G}],\quad
P\text{-a.s.},
\]
we have%
\[
|\mathbb{E}[Y(u,v)|\mathcal{G}]|=|\mathbb{E}[Y|\mathcal{G}](u,v)|\leq
\Vert\mathbb{E}[Y|\mathcal{G}]\Vert_{\mathfrak{L}_{2}(H\times H)}\Vert
u\Vert_{H}\Vert v\Vert_{H},\quad P\text{-a.s.}
\]
By taking $g=\Vert\mathbb{E}[Y|\mathcal{G}]\Vert_{\mathfrak{L}_{2}(H\times
H)},$ we obtain the domination condition.
\end{proof}

\begin{remark}
\label{rm2-2} \upshape{In  Theorem \ref{Myth2-17}, it is not necessary that $Y$ itself can be aggregated, for $\mathbb{E}[Y|\mathcal{G}]$ (referred to the mapping defined by $(u,v)\mapsto\mathbb{E}[Y(u,v)|\mathcal{G}]\in\mathfrak{L}_{2}(H\times H;L^{1}	(\mathcal{G}))$) to have an aggregated version. This may not be true in the subsequent applications; see Remark \ref{Myrm2-2}. Thus, we take $Y$ to be in the larger space $\mathfrak{L}_{2}(H\times H;L^{1}(\mathcal{F}))$ than $L_{w}^{1}(\mathcal{F},\mathfrak{L}_{2}(H\times H))$.}
\end{remark}

From $\mathfrak{L}_{2}(H\times H)=\mathfrak{L}(H)$, we can also write
(\ref{Myeq2-15}) as, for weakly $\mathcal{G}$-measurable $Z$ taking values in
$\mathfrak{L}(H)=\mathfrak{L}_{2}(H\times H)$ and $Y\in\mathfrak{L}%
_{2}(H\times H;L^{1}(\mathcal{F}))$,
\begin{equation}
\langle Zu,v\rangle=Z(u,v)=\mathbb{E}[Y(u,v)|\mathcal{G}],\quad P\text{-a.s.,}%
\ \forall(u,v)\in H\times H. \label{Myeq2-8}%
\end{equation}

\subsection{Formulation of the BSIE}

By a stochastic evolution operator on $H$, we mean a family of mappings
\[
\{L(t,s)\in\mathfrak{L}(L^{2}(\mathcal{F}_{t},H);L^{2}(\mathcal{F}%
_{s},H)):(t,s)\in\Delta\}
\]
with $\Delta=\{(t,s):0\leq t\leq s\leq T\}$. We adopt a definition of the
following formal adjoint $L^{\ast}$ for $L$: For any fixed $(t,s)\in\Delta$
and $u\in L^{1}(\mathcal{F}_{s},H),$ define $L^{\ast}(t,s)u$ by%
\[
(L^{\ast}(t,s)u)(v):=\langle u,L(t,s)v\rangle\ \ P\text{-a.s.},\quad\text{for
each}\ v\in L^{2}(\mathcal{F}_{t},H).
\]

Motivated by the constants of variation method for operator-valued SPDEs (see
(i) of Remark \ref{Rm2-2}), we shall consider a conditionally expected
$\mathfrak{L}(H)$-valued BSIE (i.e., $\mathfrak{L}(H)$-valued BSIE in the
conditional expectation form):
\begin{equation}
P(t)=\mathbb{E}[L^{\ast}(t,T)\xi L(t,T)+\int_{t}^{T}L^{\ast}%
(t,s)f(s,P(s))L(t,s)ds|\mathcal{F}_{t}],\quad t\in\lbrack0,T], \label{Myeq2-1}%
\end{equation}
where the coefficients $\xi$, $f$ and $L$ are given and subject to the
following assumptions:

\begin{description}
\item[$(H1)$] There exists some constant $\Lambda\geq0$ such that for each
$(t,s)\in\Delta$ and $u\in L^{4}(\mathcal{F}_{t},H),$ it holds that
$L(t,s)u\in L^{4}(\mathcal{F}_{s},H)$,
\[
\mathbb{E}[\left\Vert L(t,s)u\right\Vert _{H}^{4}|\mathcal{F}_{t}]\leq
\Lambda\Vert u\Vert_{H}^{4},\quad P\text{-a.s}.,
\]
and $(\omega,t,s)\mapsto(L(t,s)u)(\omega)$ admits a jointly measurable version.

\item[$(H2)$] $\xi\in L_{w}^{2}(\mathcal{F}_{T},\mathfrak{L}(H))$; the
function $f(w,t,p):\Omega\times\lbrack0,T]\times\mathfrak{L}(H)\rightarrow
\mathfrak{L}(H)$ is $\mathcal{P}\otimes L_{w}/L_{w}$-measurable and satisfies
the Lipschitz condition in $p$ with constant $\lambda\geq0$; $f(\cdot
,\cdot,0)\in L_{\mathbb{F},w}^{2}(0,T;\mathfrak{L}(H)).$
\end{description}

\begin{remark}\upshape{
 Fix
	any $u\in L^{1}(\mathcal{F}_{s},H).$ For each $v\in L^{2}(\mathcal{F}%
	_{t},H),$ $L^{\ast}(t,s)u$ maps $v$ to a real-valued $\mathcal{F}_{s}$-measurable  random variable $\langle u,L(t,s)v\rangle$. But the quantity  $\langle u,L(t,s)v\rangle$ is not necessarily integrable. It is integrable if, according to the H\"{o}lder inequality, one of the following is imposed: (i) $u\in L^{2}(\mathcal{F}_{s},H)$; (ii) $u\in L^{\frac43}(\mathcal{F}_{s},H)$, $v\in L^{4}(\mathcal{F}_{t},H)$ and $(H1)$ holds.}
\end{remark}

We first show that the operator-valued conditional expectation on the right
hand side of the equation is meaningful. To apply the result in Theorem
\ref{Myth2-17}, we begin with assigning a rigorous meaning to the term
$L^{\ast}(t,T)\xi L(t,T)+\int_{t}^{T}L^{\ast}(t,s)f(s,P(s))L(t,s)ds$ inside
the conditional expectation and demonstrate that it belongs to $\mathfrak{L}%
_{2}(H\times H;L^{1}(\mathcal{F}_{T}))$.

\begin{remark}
\label{Myrm2-2} \upshape{From the settings for $L$, we know that
$L(t,s)$ is not $\mathfrak{L}(H)$-valued for pointwise $\omega$ (see also subsection 2.4 for the explanations on this setting), and so
$L^{\ast}(t,T)\xi L(t,T)+\int_{t}^{T}L^{\ast}(t,s)f(s,P(s))L(t,s)ds$ is not. That is, we cannot expect that this term belongs to $L_{w}^{1}(\mathcal{F}_T,\mathfrak{L}_{2}(H\times H))$, but rather, as we shall see later,
is an element in
$\mathfrak{L}_{2}(H\times H;L^{1}(\mathcal{F}_{T}))$.}
\end{remark}

\begin{remark}
\upshape{
	\textrm{{ (i)}} For any sub-$\sigma$-algebra
$\mathcal{G}$ of $\mathcal{F}$ and a mapping $\eta:$%
 $\Omega\rightarrow\mathfrak{L}(H)$, the following four statements are equivalent:

(a) $\eta$ is weakly $\mathcal{G}$-measurable;

(b) For any $u\in H,$ $\eta u:\Omega\rightarrow H$ is (strongly)
$\mathcal{G}$-measurable  (note that since $H$ is separable, the notions of
\textit{measurable}, \textit{weakly measurable} and \textit{strongly
	measurable} are the same);

(c) For any (strongly) $\mathcal{G}$-measurable $u,v:\Omega\rightarrow H,$
the real-valued function $\langle\eta u,v\rangle$ is $\mathcal{G}$-measurable;

(d) For any (strongly) $\mathcal{G}$-measurable $u:\Omega\rightarrow H,$ the function
$\eta u:\Omega\rightarrow H$ is (strongly) $\mathcal{G}$-measurable.

Indeed, it can be proved as follows:

\noindent(a)$\Longrightarrow$(b): The real-valued function $\langle\eta
u,v\rangle$ is $\mathcal{F}_{s}$-measurable for each $v\in H$. This means that
$\eta u:\Omega\rightarrow H$ is weakly $\mathcal{F}_{s}$-measurable. Noting
that $H$ is separable, this is equivalent to stating that $\eta u:\Omega
\rightarrow H$ is (strongly) $\mathcal{F}_{s}$-measurable.

\noindent(b)$\Longrightarrow$(a): Since $\eta u:\Omega\rightarrow H$ is (strongly)
$\mathcal{G}$-measurable, then it is weakly measurable, i.e., for any $v\in
H,$ the real-valued function $\langle\eta u,v\rangle$ is $\mathcal{G}$-measurable.

\noindent Surely, (c)$\Longrightarrow$(a) and (d)$\Longrightarrow$(bi).

\noindent Now we only prove (b)$\Longrightarrow$(d), and the proof of
(a)$\Longrightarrow$(c) is similar. 
First for any simple%
\[
u=\sum_{i=1}^{N}u_{i}I_{A_{i}},\quad\text{with}\ u_{i}\in H,\ A_{i}%
\in\mathcal{F}_{s},
\]
we have that%
\[
\eta u=\sum_{i=1}^{N}(\eta u_{i})I_{A_{i}}%
\]
is (strongly) $\mathcal{G}$-measurable. Finally, for any $H$-valued (strongly)
$\mathcal{G}$-measurable $u$, we can take a simple sequence%
\[
u_{k}\rightarrow u\quad\text{pointwise},\ \text{as}\ k\rightarrow\infty.
\]
Then%
\[
\eta u=\eta(\lim_{k\rightarrow\infty}u_{k})=\lim_{k\rightarrow\infty}\eta
u_{k}%
\]
is (strongly) $\mathcal{G}$-measurable. The proof is complete.

\textrm{{ (ii)}} From (i), we know that, the \textit{weakly measurability} notion used in this paper is coincide with the notion of \textit{strongly measurability} used in \cite{DZ92}. But we prefer to call it weak measurability since it is weak than the usual (norm-) measurability.
 According to (i), we know that for any $\eta\in L_{w}^{2}(\mathcal{F}_{s},\mathfrak{L}(H))$ and $u\in H$, the
random mapping $\eta L(t,s)u:\Omega\rightarrow H$ is (strongly) $\mathcal{F}%
_{s}$-measurable. 

It is easy to see that similar results hold for weakly adapted and progressively measurable processes. 
Moreover, by a similar proof, the above equivalence relationship also holds for different separable Hilbert spaces $H_1,H_2$ and mappings taking values in $\mathfrak{L}(H_1,H_2)$.
}
\end{remark}

Under the assumption $(H1)$, given any $\eta\in L_{w}^{2}(\mathcal{F}%
_{s},\mathfrak{L}(H))$ and $(u,v)\in H\times H,$  from the H\"{o}lder inequality and the condition $(H1)$, it is
straightforward to check that%
\[
\mathbb{\mathbb{E}}[\Vert\eta L(t,s)u\Vert_{H}^{\frac{4}{3}}]\leq
(\mathbb{\mathbb{E}}[\Vert\eta\Vert_{H}^{2}])^{\frac{2}{3}}(\mathbb{\mathbb{E}%
}[\Vert L(t,s)u\Vert_{H}^{4}])^{\frac{1}{3}}\leq\Lambda^{\frac{1}{3}%
}(\mathbb{\mathbb{E}}[\Vert\eta\Vert_{H}^{2}])^{\frac{2}{3}}\Vert u\Vert
_{H}^{\frac{4}{3}}<\infty.
\]
Thus the random function $\eta L(t,s)u\in L^{\frac{4}{3}%
}(\mathcal{F}_{s},H)$.

Moreover,
\begin{align*}
\mathbb{E}[|(L^{\ast}(t,s)\eta L(t,s)u)(v)|]  &  =\mathbb{E}[|\langle\eta
L(t,s)u,L(t,s)v\rangle|]\\
&  \leq(\mathbb{E}[\Vert L(t,s)u\Vert_{H}^{4}])^{\frac{1}{4}}(\mathbb{E}%
[\Vert\eta\Vert_{\mathfrak{L}(H)}^{2}])^{\frac{1}{2}}(\mathbb{E}[\Vert
L(t,s)v\Vert_{H}^{4}])^{\frac{1}{4}}\\
&  \leq\Lambda^{\frac{1}{2}}(\mathbb{E}[\Vert\eta\Vert_{\mathfrak{L}(H)}%
^{2}])^{\frac{1}{2}}\Vert u\Vert_{H}\Vert v\Vert_{H}.
\end{align*}
Thus, we have $L^{\ast}(t,s)\eta L(t,s)\in\mathfrak{L}(H;\mathfrak{L}(H;L^{1}
(\mathcal{F}_{s})))=\mathfrak{L}_{2}(H\times H;L^{1}(\mathcal{F}_{s}))$ and we
can also write that $(L^{\ast}(t,s)\eta L(t,s)u)(v)=L^{\ast}(t,s)\eta L(t,s)(u,v)$.
In particular, $L^{\ast}(t,T)\xi L(t,T)\in\mathfrak{L}_{2}(H\times
H;L^{1}(\mathcal{F}_{T})).$

Now we consider the integral term. In general, for a $g\in\mathfrak{L}%
_{2}(H\times H;L_{\mathbb{F}}^{1}(t,T)),$ following the idea of Pettis
integration (see, e.g., \cite{Pe38}), we define its integral with respect to
time $\int_{t}^{T}g(s)ds$ in a weak sense by%
\[
(\int_{t}^{T}g(s)ds)(u,v):=\int_{t}^{T}g(s)(u,v)ds \ \ P\text{-a.s}%
,\quad\forall(u,v)\in H\times H.
\]
Then $\int_{t}^{T}g(s)ds\in\mathfrak{L}_{2}(H\times H;L^{1}(\mathcal{F}_{T}))$
by the observation that
\[
\mathbb{E}[|(\int_{t}^{T}g(s)ds)(u,v)|] \leq\mathbb{E}[\int_{t}^{T}%
|g(s)(u,v)|ds] \leq C\Vert u\Vert_{H}\Vert v\Vert_{H}.
\]
Note that for any $h\in L_{\mathbb{F},w}^{2}(t,T;\mathfrak{L}(H))$ and
$(u,v)\in H\times H,$
\begin{align*}
& \mathbb{E}[\int_{t}^{T}|L^{\ast}(t,s)h(s)L(t,s)(u,v)|ds]   =\mathbb{E}%
[\int_{t}^{T}|\langle h(s)L(t,s)u,L(t,s)v\rangle|ds]\\
& \ \ \ \ \ \ \ \ \ \ \ \ \leq(\int_{t}^{T}\mathbb{E}[\Vert L(t,s)u\Vert_{H}^{4}]ds)^{\frac{1}{4}%
}(\mathbb{E}[\int_{t}^{T}\Vert h(s)\Vert_{\mathfrak{L}(H)}^{2}ds])^{\frac
{1}{2}}(\int_{t}^{T}\mathbb{E}[\Vert L(t,s)v\Vert_{H}^{4}]ds)^{\frac{1}{4}}\\
& \ \ \ \ \ \ \ \ \ \ \ \  \leq\Lambda^{\frac{1}{2}}T^{\frac{1}{2}}(\mathbb{E}[\int_{t}^{T}\Vert
h(s)\Vert_{\mathfrak{L}(H)}^{2}ds])^{\frac{1}{2}}\Vert u\Vert_{H}\Vert
v\Vert_{H}.
\end{align*}
Thus $[t,T]\ni s\mapsto L^{\ast}(t,s)h(s)L(t,s)\in\mathfrak{L}_{2}(H\times
H;L_{\mathbb{F}}^{1}(t,T))$ and the integral $\int_{t}^{T}L^{\ast
}(t,s)h(s)L(t,s)ds\in\mathfrak{L}_{2}(H\times H;L^{1}(\mathcal{F}_{T}))$ is defined.

The BSIE is considered as an equation in the space $\mathfrak{L}(H)$ as follows.

\begin{definition}
\label{Def1-1} A process $P\in L_{\mathbb{F},w}^{2}(0,T;\mathfrak{L}(H))$ is
called a solution of (\ref{Myeq2-1}) if for each $0\leq t\leq T,$
\begin{equation}
P(t)=\mathbb{E}[L^{\ast}(t,T)\xi L(t,T)+\int_{t}^{T}L^{\ast}
(t,s)f(s,P(s))L(t,s)ds|\mathcal{F}_{t}],\quad P\text{-a.s}. \label{Myeq2-6}%
\end{equation}

\end{definition}

Given any $P\in L_{\mathbb{F},w}^{2}(0,T;\mathfrak{L}(H))$. Since it is
$\mathcal{P}/L_{w}$-measurable, we deduce by $(H2)$ and the measurability of
composition that $f(\cdot,P(\cdot))$ is $\mathcal{P}/L_{w}$-measurable, i.e.,
weakly progressively measurable. From this and the Lipschitz continuity of
$f$, we obtain that $f(\cdot,P(\cdot))\in L_{\mathbb{F},w}^{2}%
(0,T;\mathfrak{L}(H))$. Thus,
\begin{equation}
\label{Myeq2-12}L^{\ast}(t,T)\xi L(t,T)+\int_{t}^{T}L^{\ast}%
(t,s)f(s,P(s))L(t,s)ds\in\mathfrak{L}_{2}(H\times H;L^{1}(\mathcal{F}_{T})).
\end{equation}
Then the conditional expectation on right hand side of BSIE (\ref{Myeq2-6}) is
a well-defined operator-valued random variable as long as we check the
domination condition (\ref{Myeq2-11}), which shall be done in the next
subsection. In what follows, $C>0$ will denote a constant which may vary from
line to line.

\subsection{Existence and uniqueness of solutions}

We have the following well-posedness result on BSIEs.

\begin{theorem}
\label{Mainthm-1} Let Assumptions $(H1)$ and $(H2)$ be satisfied. Then there
exists a unique (up to modification) solution $P$ to BSIE (\ref{Myeq2-1}).
Moreover, for each $t\in\lbrack0,T],$
\begin{equation}
\Vert P(t)\Vert_{\mathfrak{L}(H)}^{2}\leq C\mathbb{E}[\Vert\xi\Vert
_{\mathfrak{L}(H)}^{2}+\int_{t}^{T}\Vert f(s,0)\Vert_{\mathfrak{L}(H)}%
^{2}ds|\mathcal{F}_{t}],\quad P\text{-a.s}., \label{Myeq3-23}%
\end{equation}
for some constant $C$ depending on $\Lambda$ and $\lambda$.
\end{theorem}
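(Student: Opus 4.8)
The plan is to obtain both assertions from a single contraction argument followed by a conditional Gronwall iteration. Define the solution map $\Phi$ on the space $L_{\mathbb{F},w}^{2}(0,T;\mathfrak{L}(H))$ by
\[
\Phi(P)(t):=\mathbb{E}[L^{\ast}(t,T)\xi L(t,T)+\int_{t}^{T}L^{\ast}(t,s)f(s,P(s))L(t,s)ds|\mathcal{F}_{t}],\quad t\in[0,T].
\]
The computations preceding Definition \ref{Def1-1} already show that the argument of $\mathbb{E}[\cdot|\mathcal{F}_{t}]$ belongs to $\mathfrak{L}(H;\mathfrak{L}(H;L^{1}(\mathcal{F}_{T})))$, so $\Phi(P)$ is a well-defined weakly $\mathcal{F}_{t}$-measurable operator; the joint-measurability clause in (H1) will be used to select a weakly progressively measurable version, and a routine version of the estimate below shows $\Phi(P)\in L_{\mathbb{F},w}^{2}(0,T;\mathfrak{L}(H))$ whenever $P$ is. A solution of (\ref{Myeq2-6}) is precisely a fixed point of $\Phi$, so existence and uniqueness will follow from the Banach fixed point theorem, once we equip $L_{\mathbb{F},w}^{2}(0,T;\mathfrak{L}(H))$ (which is complete; this is checked test-pointwise, each $\langle\cdot\,u,v\rangle$ living in the complete space $L_{\mathbb{F}}^{2}(0,T)$) with an exponentially weighted norm and show $\Phi$ is a contraction for a large enough weight.

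The heart of the matter is a single pointwise-in-$t$ estimate, obtained at the level of test points. For $(u,v)\in H\times H$, writing out $\langle(\Phi(P_{1})(t)-\Phi(P_{2})(t))u,v\rangle$ and using the Lipschitz property of $f$ from (H2), the integrand is bounded by $\lambda\|P_{1}(s)-P_{2}(s)\|_{\mathfrak{L}(H)}\|L(t,s)u\|_{H}\|L(t,s)v\|_{H}$. The decisive step is to apply the conditional Hölder inequality with the three exponents $(2,4,4)$: this matches exactly the fourth-moment control $\mathbb{E}[\|L(t,s)u\|_{H}^{4}|\mathcal{F}_{t}]\le\Lambda\|u\|_{H}^{4}$ granted by (H1) against the $L^{2}$-norm carried by $P_{1}-P_{2}$, producing the factor $\Lambda^{1/2}\|u\|_{H}\|v\|_{H}(\mathbb{E}[\|P_{1}(s)-P_{2}(s)\|_{\mathfrak{L}(H)}^{2}|\mathcal{F}_{t}])^{1/2}$. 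Taking the supremum over a countable dense family of unit test points (legitimate since $H$ is separable, which also keeps the operator norm measurable) yields
\[
\|\Phi(P_{1})(t)-\Phi(P_{2})(t)\|_{\mathfrak{L}(H)}\le\lambda\Lambda^{1/2}\int_{t}^{T}(\mathbb{E}[\|P_{1}(s)-P_{2}(s)\|_{\mathfrak{L}(H)}^{2}|\mathcal{F}_{t}])^{1/2}ds.
\]
Squaring, using Cauchy--Schwarz in $s$, taking expectation, and then multiplying by $e^{\beta t}$ and integrating via Fubini turns the backward time integral into the factor $(e^{\beta s}-1)/\beta$; hence $\|\Phi(P_{1})-\Phi(P_{2})\|_{\beta}^{2}\le\frac{\lambda^{2}\Lambda T}{\beta}\|P_{1}-P_{2}\|_{\beta}^{2}$, a contraction once $\beta>\lambda^{2}\Lambda T$.

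For the a priori bound (\ref{Myeq3-23}), I would run the same test-point and conditional-Hölder estimate on the fixed point $P$ itself, now keeping the data, to reach
\[
\|P(t)\|_{\mathfrak{L}(H)}^{2}\le C_{1}\mathbb{E}[\|\xi\|_{\mathfrak{L}(H)}^{2}+\int_{t}^{T}\|f(s,0)\|_{\mathfrak{L}(H)}^{2}ds|\mathcal{F}_{t}]+C_{2}\int_{t}^{T}\mathbb{E}[\|P(s)\|_{\mathfrak{L}(H)}^{2}|\mathcal{F}_{t}]ds.
\]
Writing $\Xi_{t}:=\|\xi\|_{\mathfrak{L}(H)}^{2}+\int_{t}^{T}\|f(s,0)\|_{\mathfrak{L}(H)}^{2}ds$, the key observation is that $\Xi_{s}\le\Xi_{t}$ for $s\ge t$, so inserting the inequality for $\mathbb{E}[\|P(s)\|_{\mathfrak{L}(H)}^{2}|\mathcal{F}_{t}]$ (valid by the tower property, since $\mathcal{F}_{t}\subseteq\mathcal{F}_{s}$) into itself repeatedly bounds every data term by $C_{1}\mathbb{E}[\Xi_{t}|\mathcal{F}_{t}]$, while the iterated convolutions of the constant kernel $C_{2}$ generate the factorially convergent series $\sum_{n}(C_{2}(T-t))^{n}/n!$; the remainder term vanishes because $P\in L_{\mathbb{F},w}^{2}$ makes $\mathbb{E}[\|P(\cdot)\|_{\mathfrak{L}(H)}^{2}|\mathcal{F}_{t}]$ integrable. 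This gives $\|P(t)\|_{\mathfrak{L}(H)}^{2}\le C_{1}e^{C_{2}T}\mathbb{E}[\Xi_{t}|\mathcal{F}_{t}]$, which is exactly (\ref{Myeq3-23}).

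I expect the main obstacle to be the book-keeping forced by the non-separable, merely weakly measurable operator setting: one must carry every estimate through scalar test points $(u,v)$, justify the passage to the operator norm by a measurable countable supremum (relying on separability of $H$), and confirm both that $\Phi$ lands in and that the ambient space is closed under limits in $L_{\mathbb{F},w}^{2}(0,T;\mathfrak{L}(H))$. The conditional Gronwall iteration behind (\ref{Myeq3-23}) is the other delicate point, since the monotonicity $\Xi_{s}\le\Xi_{t}$ and the tower property must be combined precisely to keep the conditioning anchored at $\mathcal{F}_{t}$ throughout the iteration.
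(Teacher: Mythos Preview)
Your proposal is correct and follows the same overall strategy as the paper: both show that the right-hand side of (\ref{Myeq2-6}) defines a map on $L_{\mathbb{F},w}^{2}(0,T;\mathfrak{L}(H))$ via test-point estimates and the conditional H\"older inequality with exponents $(2,4,4)$, then obtain existence and uniqueness by a contraction argument, and the a priori bound by a conditional Gronwall step. The tactical choices differ in two places. First, you run a single contraction on $[0,T]$ under an exponentially weighted norm, whereas the paper contracts on a short interval $[T-\delta,T]$ and iterates backward; these are interchangeable standard devices. Second, for (\ref{Myeq3-23}) you iterate the conditional inequality directly (using the tower property and the monotonicity $\Xi_{s}\le\Xi_{t}$) to sum a factorial series, while the paper instead proves a comparison estimate (its Theorem~\ref{Myth2-10}) by multiplying the pointwise bound by $I_{A}$ for arbitrary $A\in\mathcal{F}_{r}$, taking expectations, applying classical Gronwall, and then recovering the conditional statement from the arbitrariness of $A$; your iteration is perhaps more direct, the paper's $I_{A}$ trick a bit slicker. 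The one place where the paper invests substantially more effort than your sketch is the aggregation machinery (its Theorem~\ref{Myth1-1}, Corollary~\ref{Myth2-17}, Lemmas~\ref{Myth2-5} and~\ref{Myle3-10}): showing that the conditional expectation really lands in $L_{w}^{2}(\mathcal{F}_{t},\mathfrak{L}(H))$, and that a weakly progressively measurable version exists, is handled there by an explicit construction on a countable basis rather than the informal ``countable dense supremum'' you invoke. Your acknowledgment of this as the main obstacle is apt; the paper's detailed treatment of it is where most of the work lies.
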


To prove this theorem, we need the following lemmas. First we see that the
conditional expectation on the right-hand side of (\ref{Myeq2-6}) is well-defined.

\begin{lemma}
\label{Myth2-5} Suppose $(H1)$ and $(H2)$ hold. For any $p\in L_{\mathbb{F}%
,w}^{2}(0,T;\mathfrak{L}(H))$ and $0\leq t\leq T$, we define
\[
Y_{t,T}^{p}:=L^{\ast}(t,T)\xi L(t,T)+\int_{t}^{T}L^{\ast}%
(t,s)f(s,p(s))L(t,s)ds.
\]
Then $\mathbb{E}[Y_{t,T}^{p}|\mathcal{F}_{t}]\in L_{w}^{2}(\mathcal{F}%
_{t},\mathfrak{L}(H))$, and there exists some constant $C>0$ depending on
$\Lambda$ and $\lambda$ such that
\begin{equation}
\Vert\mathbb{E}[Y_{t,T}^{p}|\mathcal{F}_{t}]\Vert_{\mathfrak{L}(H)}\leq
C(\mathbb{E}[\Vert\xi\Vert_{\mathfrak{L}(H)}^{2}+\int_{t}^{T}\Vert
p(s)\Vert_{\mathfrak{L}(H)}^{2}ds+\int_{t}^{T}\Vert f(s,0)\Vert_{\mathfrak{L}%
(H)}^{2}ds|\mathcal{F}_{t}])^{\frac{1}{2}},\quad P\text{-a.s.}
\label{Myeq3-17}%
\end{equation}
Moreover, $\{\mathbb{E}[Y_{t,T}^{p}|\mathcal{F}_{t}]\}_{t\in\lbrack0,T]}\in
L_{\mathbb{F},w}^{2}(0,T;\mathfrak{L}(H))$.
\end{lemma}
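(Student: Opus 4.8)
\noindent The plan is to obtain the pointwise‑in‑$t$ existence together with the estimate (\ref{Myeq3-17}) from Corollary \ref{Myth2-17}, and the process regularity (weak progressive measurability plus square integrability on $[0,T]$) from Assertion (ii) of Theorem \ref{Myth1-1}. First I would recall, as already observed before Definition \ref{Def1-1}, that $Y_{t,T}^{p}\in\mathfrak{L}(H;\mathfrak{L}(H;L^{1}(\mathcal{F}_{T})))$, so that Corollary \ref{Myth2-17} applies to $Y_{t,T}^{p}$ as soon as the bilinear form $G_{t}(u,v):=\mathbb{E}[\langle Y_{t,T}^{p}u,v\rangle|\mathcal{F}_{t}]$ is shown to satisfy the domination (\ref{Myeq1-8}) with an integrable kernel equal to the right-hand side of (\ref{Myeq3-17}).

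For that domination I would use the duality identity $\langle L^{\ast}(t,s)\eta L(t,s)u,v\rangle=\langle\eta L(t,s)u,L(t,s)v\rangle$ established above to write $\langle Y_{t,T}^{p}u,v\rangle=\langle\xi L(t,T)u,L(t,T)v\rangle+\int_{t}^{T}\langle f(s,p(s))L(t,s)u,L(t,s)v\rangle ds$. Bounding each bracket by the operator norm times the two $H$-norms, using the Lipschitz estimate $\|f(s,p(s))\|_{\mathfrak{L}(H)}\leq\|f(s,0)\|_{\mathfrak{L}(H)}+\lambda\|p(s)\|_{\mathfrak{L}(H)}$ from (H2), then taking $\mathbb{E}[\cdot|\mathcal{F}_{t}]$ and applying the conditional H\"{o}lder inequality with exponents $(2,4,4)$, the fourth-moment control $\mathbb{E}[\|L(t,s)u\|_{H}^{4}|\mathcal{F}_{t}]\leq\Lambda\|u\|_{H}^{4}$ from (H1) converts the two $L$-factors into $\Lambda^{1/2}\|u\|_{H}\|v\|_{H}$. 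A Cauchy--Schwarz step in the $ds$-integral (producing a factor $(T-t)^{1/2}$) and the inequality $(a+b)^{2}\leq2a^{2}+2b^{2}$ then yield $|G_{t}(u,v)|\leq g_{t}\|u\|_{H}\|v\|_{H}$ with $g_{t}:=C(\mathbb{E}[\|\xi\|_{\mathfrak{L}(H)}^{2}+\int_{t}^{T}\|p(s)\|_{\mathfrak{L}(H)}^{2}ds+\int_{t}^{T}\|f(s,0)\|_{\mathfrak{L}(H)}^{2}ds|\mathcal{F}_{t}])^{1/2}$ and $C=C(\Lambda,\lambda,T)$. Since $\xi\in L_{w}^{2}(\mathcal{F}_{T},\mathfrak{L}(H))$, $f(\cdot,\cdot,0)\in L_{\mathbb{F},w}^{2}$ and $p\in L_{\mathbb{F},w}^{2}$, the kernel $g_{t}$ is a nonnegative element of $L^{1}(\mathcal{F}_{t})$, indeed of $L^{2}(\mathcal{F}_{t})$; hence Corollary \ref{Myth2-17} gives the existence and uniqueness of $\mathbb{E}[Y_{t,T}^{p}|\mathcal{F}_{t}]$ and, via (\ref{Myeq2-11}), the bound $\|\mathbb{E}[Y_{t,T}^{p}|\mathcal{F}_{t}]\|_{\mathfrak{L}(H)}\leq g_{t}$, which is precisely (\ref{Myeq3-17}); the membership in $L_{w}^{2}(\mathcal{F}_{t},\mathfrak{L}(H))$ follows because $g_{t}\in L^{2}(\mathcal{F}_{t})$.

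For the final assertion, square integrability on $[0,T]$ is immediate: integrating the square of (\ref{Myeq3-17}) in $t$, taking expectations, and using the tower property bounds $\mathbb{E}\int_{0}^{T}\|\mathbb{E}[Y_{t,T}^{p}|\mathcal{F}_{t}]\|_{\mathfrak{L}(H)}^{2}dt$ by a constant multiple of $\mathbb{E}\|\xi\|^{2}+\mathbb{E}\int_{0}^{T}\|f(s,0)\|^{2}ds+\mathbb{E}\int_{0}^{T}\|p(s)\|^{2}ds<\infty$. For the weak progressive measurability I would apply Assertion (ii) of Theorem \ref{Myth1-1} with $J_{t}(u,v):=\mathbb{E}[\langle Y_{t,T}^{p}u,v\rangle|\mathcal{F}_{t}]$ and dominating kernel $h_{t}:=g_{t}$: bilinearity of $J$ is inherited from the inner product and the linearity of conditional expectation, the domination (\ref{Myeq2-22}) is exactly the bound of the previous paragraph, and the aggregated process $\bar{J}$ produced by the theorem is weakly progressively measurable and, by the uniqueness in Corollary \ref{Myth2-17}, coincides for each $t$ with $\mathbb{E}[Y_{t,T}^{p}|\mathcal{F}_{t}]$, so it is the desired modification.

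The main obstacle is the hypothesis in Theorem \ref{Myth1-1}(ii) that $J_{t}(u,v)$ and the kernel $h_{t}$ belong to $H^{0}(0,T)$, i.e. that the diagonal conditional expectations $t\mapsto\mathbb{E}[\langle Y_{t,T}^{p}u,v\rangle|\mathcal{F}_{t}]$ and $t\mapsto\mathbb{E}[\|\xi\|^{2}+\int_{t}^{T}(\cdots)ds|\mathcal{F}_{t}]$ admit progressively measurable versions. I would first verify that $\Phi(t,\omega):=\langle Y_{t,T}^{p}u,v\rangle(\omega)$ is $\mathcal{B}([0,T])\otimes\mathcal{F}_{T}$-measurable, which rests on the jointly measurable version of $(\omega,t,s)\mapsto L(t,s)u$ postulated in (H1), on the weak measurability of $\xi$ and of $s\mapsto f(s,p(s))$ (upgraded to joint measurability by separability), and on a Fubini argument for the $ds$-integral over the moving interval $[t,T]$. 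The passage from such a jointly measurable, integrable parameterized random variable to a progressively measurable version of its diagonal conditional expectation $t\mapsto\mathbb{E}[\Phi(t,\cdot)|\mathcal{F}_{t}]$ is the delicate point; I would handle it by the standard scheme in the spirit of \cite{El79,Tang15}: establish it first for product data $\Phi(t,\omega)=a(t)\zeta(\omega)$, where $\mathbb{E}[\Phi(t,\cdot)|\mathcal{F}_{t}]=a(t)\mathbb{E}[\zeta|\mathcal{F}_{t}]$ is the product of a Borel function with a c\`adl\`ag (hence progressively measurable) martingale, and then extend by linearity and a monotone-class argument to all bounded jointly measurable $\Phi$, and finally by truncation to integrable $\Phi$. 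Everything outside this measurability step is routine estimation.
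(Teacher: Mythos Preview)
Your argument is correct and matches the paper's proof almost verbatim: the same duality identity, the same conditional H\"older splitting with exponents $(4,2,4)$, the same appeal to Corollary~\ref{Myth2-17} for the pointwise existence and the bound~(\ref{Myeq3-17}), and the same reduction of the process regularity to Theorem~\ref{Myth1-1}(ii) (packaged in the paper as Lemma~\ref{Myle3-10}). The only difference is at what you rightly flag as the delicate point, namely that $t\mapsto\mathbb{E}[\langle Y_{t,T}^{p}u,v\rangle\,|\,\mathcal{F}_{t}]$ has a progressively measurable version: instead of your monotone-class argument, the paper simply invokes the optional projection of the jointly measurable process $(\langle Y_{t,T}^{p}u,v\rangle)_{t\in[0,T]}$ (Corollary~7.6.8 in~\cite{CE15}), which delivers exactly this in one stroke and spares you from reproving it by hand.
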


\begin{proof}
First we have $Y_{t,T}^{p}\in\mathfrak{L}_{2}(H\times H;L^{1}(\mathcal{F}%
_{T}))$ from the discussions in the last subsection. For any $(u,v)\in H\times
H,$ we directly calculate
\begin{align*}
|\mathbb{E}[Y_{t,T}^{p}(u,v)|\mathcal{F}_{t}]|  &  =|\mathbb{E}[\langle\xi
L(t,T)u,L(t,T)v\rangle+\int_{t}^{T}\langle f(s,p(s))L(t,s)u,L(t,s)v\rangle
ds|\mathcal{F}_{t}]|\\
&  \leq(\mathbb{E}[\Vert L(t,T)u\Vert_{H}^{4}|\mathcal{F}_{t}])^{\frac{1}{4}%
}(\mathbb{E}[\Vert\xi\Vert_{\mathfrak{L}(H)}^{2}|\mathcal{F}_{t}])^{\frac
{1}{2}}(\mathbb{E}[\Vert L(t,T)v\Vert_{H}^{4}|\mathcal{F}_{t}])^{\frac{1}{4}%
}\\
&  \quad+(\int_{t}^{T}\mathbb{E}[\Vert L(t,s)u\Vert_{H}^{4}|\mathcal{F}%
_{t}]ds)^{\frac{1}{4}}(\mathbb{E}[\int_{t}^{T}\Vert f(s,p(s))\Vert
_{\mathfrak{L}(H)}^{2}ds|\mathcal{F}_{t}])^{\frac{1}{2}}(\int_{t}%
^{T}\mathbb{E}[\Vert L(t,s)v\Vert_{H}^{4}|\mathcal{F}_{t}]ds)^{\frac{1}{4}}\\
&  \leq C\Vert u\Vert_{H}\Vert v\Vert_{H}\{(\mathbb{E}[\Vert\xi\Vert
_{\mathfrak{L}(H)}^{2}|\mathcal{F}_{t}])^{\frac{1}{2}}+(\mathbb{E}[\int%
_{t}^{T}\Vert f(s,p(s))\Vert_{\mathfrak{L}(H)}^{2}ds|\mathcal{F}_{t}%
])^{\frac{1}{2}}\}\\
&  \leq C\Vert u\Vert_{H}\Vert v\Vert_{H}(\mathbb{E}[\Vert\xi\Vert
_{\mathfrak{L}(H)}^{2}+\int_{t}^{T}\Vert p(s)\Vert_{\mathfrak{L}(H)}%
^{2}ds+\int_{t}^{T}\Vert f(s,0)\Vert_{\mathfrak{L}(H)}^{2}ds|\mathcal{F}%
_{t}])^{\frac{1}{2}},\quad P\text{-a.s.}%
\end{align*}
Then by Theorem \ref{Myth2-17}, $\mathbb{E}[Y_{t,T}^{p}|\mathcal{F}_{t}],$ the
expectation of $Y_{t,T}^{p}$ conditioned on $\mathcal{F}_{t},$ is a
well-defined $\mathfrak{L}(H)$-valued random variables (see (\ref{Myeq2-8})),
and (\ref{Myeq3-17}) follows from (\ref{Myeq2-11}). Thus, $\mathbb{E}%
[Y_{t,T}^{p}|\mathcal{F}_{t}]\in L_{w}^{2}(\mathcal{F}_{t},\mathfrak{L}(H))$

It remains to show that $\{\mathbb{E}[Y_{t,T}^{p}|\mathcal{F}_{t}%
]\}_{t\in\lbrack0,T]}$ has a weakly progressively measurable version. This is
obtained from the following Lemma \ref{Myle3-10} and the fact that, for each
$(u,v)\in H\times H,$ $\{\mathbb{E}[Y_{t,T}^{p}(u,v)|\mathcal{F}_{t}%
]\}_{t\in\lbrack0,T]}$ has a progressively measurable version by considering
its optional projection (see \cite[Corollary 7.6.8]{CE15}).
\end{proof}

\begin{lemma}
\label{Myle3-10} Let $Y$ be an $\mathfrak{L}(H)$-valued weakly adapted process
satisfying $Y_{t}\in L_{w}^{1}(\mathcal{F}_{t},\mathfrak{L}(H))$ for $0\leq
t\leq T$. Then $Y$ has an $\mathfrak{L}(H)$-valued weakly progressively
measurable modification $\bar{Y}$ if and only if for each $(u,v)\in H\times
H,$ $\{\langle Y_{t}u,v\rangle\}_{0\leq t\leq T}$ has a progressively
measurable modification.
\end{lemma}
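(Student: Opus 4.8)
The plan is to treat the two implications separately. The ``only if'' direction is immediate, and all the work lies in the converse, where I would feed a suitable bilinear map into the aggregation Theorem \ref{Myth1-1}(ii).

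For necessity, suppose $(Y_t)$ admits a weakly progressively measurable modification $\bar{Y}$. Then for every $(u,v)\in H\times H$ the real-valued process $(\langle \bar{Y}_t u,v\rangle)_{0\le t\le T}$ is progressively measurable by the very definition of weak progressive measurability, and since $\bar{Y}_t=Y_t$ $P$-a.s. for each $t$ we have $\langle \bar{Y}_t u,v\rangle=\langle Y_t u,v\rangle$ $P$-a.s. for each $t$; thus $(\langle \bar{Y}_t u,v\rangle)_t$ is the required progressively measurable modification of $(\langle Y_t u,v\rangle)_t$.

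For sufficiency, by hypothesis I may fix, for each $(u,v)\in H\times H$, a progressively measurable modification $J_t(u,v)$ of $(\langle Y_t u,v\rangle)_t$, thereby defining a map $J:H\times H\to H^0(0,T)$. I first verify that $J$ is bilinear in the paper's sense: for fixed $t$, linearity of the operator $Y_t$ gives the pointwise identity $\langle Y_t(\alpha_1 u_1+u_2),v\rangle=\alpha_1\langle Y_t u_1,v\rangle+\langle Y_t u_2,v\rangle$, so both $J_t(\alpha_1 u_1+u_2,v)$ and $\alpha_1 J_t(u_1,v)+J_t(u_2,v)$ are progressively measurable modifications of one and the same process and hence agree $P$-a.s. for each $t$; the same argument settles the second variable. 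Next I produce a dominating process: with $V_1:=\{(u,v)\in V:\|u\|_H\le 1,\ \|v\|_H\le 1\}$ (a countable set dense in the product of the closed unit balls) I set $h_t:=\sup_{(u,v)\in V_1}|J_t(u,v)|$, which lies in $H^0(0,T)$ as a countable supremum of progressively measurable processes. For each fixed $t$, outside the countable union over $V_1$ of the exceptional null sets one has $J_t(u,v)=\langle Y_t u,v\rangle$ simultaneously for all $(u,v)\in V_1$; since $(u,v)\mapsto\langle Y_t u,v\rangle$ is continuous, taking the supremum gives $h_t=\|Y_t\|_{\mathfrak{L}(H)}$ $P$-a.s. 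Consequently $0\le h_t\in L^1(\mathcal{F}_t)$ because $Y_t\in L_w^1(\mathcal{F}_t,\mathfrak{L}(H))$, and $|J_t(u,v)|=|\langle Y_t u,v\rangle|\le h_t\|u\|_H\|v\|_H$ $P$-a.s. for every $(u,v)$, which is precisely (\ref{Myeq2-22}).

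Now Theorem \ref{Myth1-1}(ii) applies and furnishes a weakly progressively measurable $\bar{J}$ with $\bar{J}_t\in L_w^1(\mathcal{F}_t,\mathfrak{L}_2(H\times H))=L_w^1(\mathcal{F}_t,\mathfrak{L}(H))$ and $\bar{J}_t(u,v)=J_t(u,v)$ $P$-a.s. for every $(u,v)$. Under the identification $\mathfrak{L}_2(H\times H)=\mathfrak{L}(H)$, let $\bar{Y}_t\in\mathfrak{L}(H)$ be the operator determined by $\langle \bar{Y}_t u,v\rangle=\bar{J}_t(u,v)$; then $\bar{Y}$ is weakly progressively measurable. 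It remains to upgrade the coordinatewise equality to an operator equality: for each $t$ and each $(u,v)$ we have $\langle \bar{Y}_t u,v\rangle=\bar{J}_t(u,v)=J_t(u,v)=\langle Y_t u,v\rangle$ $P$-a.s.; intersecting the countably many exceptional sets over $(u,v)\in V$ produces, for each $t$, a full-measure event on which these agree for all $(u,v)\in V$, whence by continuity and bilinearity for all $(u,v)\in H\times H$, so that $\bar{Y}_t=Y_t$ in $\mathfrak{L}(H)$ $P$-a.s. for each $t$, i.e. $\bar{Y}$ is a modification of $Y$. I expect the main obstacle to be exactly this bookkeeping — building a progressively measurable, $\mathcal{F}_t$-integrable dominating $h$, and then passing from ``for each $(u,v)$, $P$-a.s.'' to ``$P$-a.s., for all $(u,v)$'' — since that is what separates a weakly progressively measurable coordinate family from a genuine $\mathfrak{L}(H)$-valued modification.
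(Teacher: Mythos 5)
Your proof is correct and follows essentially the same route as the paper: both feed the coordinatewise progressively measurable modifications, viewed as a bilinear map into $H^{0}(0,T)$, into the aggregation Theorem \ref{Myth1-1}(ii), with a dominating process given by (a modification of) $\Vert Y_{t}\Vert_{\mathfrak{L}(H)}$. The only difference is that you spell out details the paper leaves implicit — constructing $h_{t}$ as a countable supremum over a dense subset of the unit balls, verifying bilinearity, and upgrading the coordinatewise equality to an operator-level modification at the end — so your write-up is somewhat more complete but not different in substance.
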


\begin{proof}
We look for the desired process by a variant of Step 2 in the proof of Theorem
\ref{Myth1-1} in the space $\mathfrak{L}_{2}(H\times H)$ of bilinear mapping,
and the result in the original form can be obtained via the isometry
$\mathfrak{L}_{2}(H\times H)=\mathfrak{L}(H)$. For any $(u,v)\in H\times H,$
we denote by $\{y_{t}(u,v)\}_{0\leq t\leq T}$ and $\{h_{t}\}_{0\leq t\leq T}$
the progressively measurable modifications of $\{\langle Y_{t}u,v\rangle
\}_{0\leq t\leq T}$ and $\{\Vert Y_{t}\Vert_{\mathfrak{L}(H)}\}_{0\leq t\leq
T},$ respectively. Then for any $t\in\lbrack0,T],$
\begin{equation}
|y_{t}(u,v)|=\langle Y_{t}u,v\rangle\leq\Vert Y_{t}\Vert_{\mathfrak{L}%
(H)}\Vert u\Vert_{H}\Vert v\Vert_{H}=h_{t}\Vert u\Vert_{H}\Vert v\Vert
_{H},\quad P\text{-a.s.} \label{Myeq2-27}%
\end{equation}
Thus, $y_{t}\in\mathfrak{L}_{2}(H\times H;L^{1}(\mathcal{F}_{t})).$ Adopt the
notions in the proof of Theorem \ref{Myth1-1} and fix any versions of process
$y(e_{i},e_{j})$ for $i,j\geq1$. For every $t$, we define
\[
\bar{Y}_{t}(e_{i},e_{j})(\omega):=a_{ij}^{t,\omega}:=y_{t}(e_{i},e_{j}%
)(\omega),\quad i,j\geq1,\text{ for each }\omega.
\]
For any fixed $t$, according to (\ref{Myeq2-27}), we have $P$-a.s. that
\begin{equation}%
\begin{split}
|\sum_{i=1}^{n}\sum_{j=1}^{m}\alpha_{i}\beta_{j}a_{ij}^{t,\omega}|  &
=|y_{t}(\sum_{i=1}^{n}\alpha_{i}e_{i},\sum_{j=1}^{m}\beta_{j}e_{j}%
)(\omega)|\leq h_{t}(\omega)\Vert\sum_{i=1}^{n}\alpha_{i}e_{i}\Vert_{H}%
\Vert\sum_{j=1}^{m}\beta_{j}e_{j}\Vert_{H},\\
&  \ \ \ \ \ \ \ \ \ \ \ \ \ \ \ \ \ \ \ \ \ \ \ \ \ \ \ \ \ \ \text{for all
}(u,v) =(\sum_{i=1}^{n}\alpha_{i}e_{i},\sum_{j=1}^{m}\beta_{j}e_{j})\in
V\times V,
\end{split}
\label{Myeq3-3}%
\end{equation}
and we denote the set (on $\Omega)$ in which the above relationship holds by
$\Omega_{t}.$ Similar to Step 2 in the proof of Theorem \ref{Myth1-1},
$\bar{Y}_{t}(\omega)$ has an extension in $\mathfrak{L}_{2}(H\times H)$ on
$\Omega_{t}$ and we set $\bar{Y}_{t}=0$ in $\Omega_{t}^{c}.$ Then $\bar{Y}%
_{t}\leq h_{t}$ $P$-a.s. Denote by $A$ the progressively measurable set of all
points $(t,\omega)$ in $\Omega\times\lbrack0,T]$ such that (\ref{Myeq3-3})
holds. Note that $\Omega_{t}$ is the section of $A$ for each $t$. Then the
$\mathfrak{L}_{2}(H\times H)$-valued process $\bar{Y}$ is automatically weakly
progressively measurable and $\bar{Y}_{t}(u,v)=y_{t}(u,v)$ $P$-a.s., for any
$(u,v)\in H\times H$ and $t\in\lbrack0,T],$ by a similar analysis as in the
proof of Theorem \ref{Myth1-1}. Since for each $t$, ${Y}_{t}$ and $\bar{Y}%
_{t}$ are both aggregated versions of $y_{t}$ in the sense of Theorem
\ref{Myth1-1}, we deduce from the uniqueness result in that theorem that
${Y}_{t}=\bar{Y}_{t}\ P\text{-a.s.}$ That is, $\bar{Y}$ is a modification of
$Y$.

The inversed assertion is trivial, by noting that for each $(u,v)\in H\times
H,$ $\{\langle\bar{Y}_{t}u,v\rangle\}_{0\leq t\leq T}$ is a progressively
measurable modification of $\{\langle Y_{t}u,v\rangle\}_{0\leq t\leq T}.$
\end{proof}

The following is the a priori estimate for the difference between two solutions.

\begin{theorem}
\label{Myth2-10} Let $L$ satisfy $(H1)$ and $(\xi, f)$ and $(\tilde{\xi},
\tilde{f})$ satisfy $(H2)$. Assume that $P,\tilde{P}\in L_{\mathbb{F},w}%
^{2}(0,T;\mathfrak{L}(H))$ are solutions to BSIEs
\[
P(t)=\mathbb{E}[L^{\ast}(t,T)\xi L(t,T)+\int_{t}^{T}L^{\ast}%
(t,s)f(s,P(s))L(t,s)ds|\mathcal{F}_{t}],\quad t\in\lbrack0,T]
\]
and%
\[
\tilde{P}(t)=\mathbb{E}[L^{\ast}(t,T)\tilde{\xi}L(t,T)+\int_{t}^{T}L^{\ast
}(t,s)\tilde{f}(s,\tilde{P}(s))L(t,s)ds|\mathcal{F}_{t}],\quad t\in
\lbrack0,T].
\]
Then there exists a constant $C>0$ which depends on $\Lambda$ and $\lambda$
such that, for each $t\in\lbrack0,T],$
\begin{equation}
\Vert P(t)-\tilde{P}(t)\Vert_{\mathfrak{L}(H)}^{2}\leq C\mathbb{E}[\Vert
\xi-\tilde{\xi}\Vert_{\mathfrak{L}(H)}^{2}+\int_{t}^{T}\Vert f(s,\tilde
{P}(s))-\tilde{f}(s,\tilde{P}(s))\Vert_{\mathfrak{L}(H)}^{2}ds|\mathcal{F}%
_{t}],\quad P\text{-a.s.} \label{Myeq2-19}%
\end{equation}

\end{theorem}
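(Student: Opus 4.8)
The plan is to reduce the estimate to the pathwise conditional-expectation bound (\ref{Myeq2-11}) applied to the difference process, and then to close a backward stochastic Gronwall inequality. Write $\Delta P:=P-\tilde{P}$ and $\Delta\xi:=\xi-\tilde{\xi}$. Since the conditional expectation of Corollary \ref{Myth2-17} is defined test-point-wise and is therefore linear, subtracting the two BSIEs gives, for $0\le t\le T$,
\[
\Delta P(t)=\mathbb{E}\Big[L^{\ast}(t,T)\Delta\xi\, L(t,T)+\int_{t}^{T}L^{\ast}(t,s)\big(f(s,P(s))-\tilde{f}(s,\tilde{P}(s))\big)L(t,s)\,ds\,\Big|\,\mathcal{F}_{t}\Big],
\]
whose right-hand side is well defined by Lemma \ref{Myth2-5} applied to the terminal datum $\Delta\xi$ and the combined driver $f(s,P(s))-\tilde{f}(s,\tilde{P}(s))$.

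First I would estimate the scalar bilinear form $G(u,v):=\mathbb{E}[\langle\Delta P(t)u,v\rangle|\mathcal{F}_{t}]$ for fixed $(u,v)\in H\times H$, repeating the chain of inequalities in the proof of Lemma \ref{Myth2-5}: expand $\langle L^{\ast}(t,s)\eta L(t,s)u,v\rangle=\langle\eta L(t,s)u,L(t,s)v\rangle$ and apply Cauchy--Schwarz and H\"older together with the fourth-moment control (H1). This gives $|G(u,v)|\le C\Vert u\Vert_{H}\Vert v\Vert_{H}\,g^{1/2}$ with $g:=\mathbb{E}[\Vert\Delta\xi\Vert_{\mathfrak{L}(H)}^{2}+\int_{t}^{T}\Vert f(s,P(s))-\tilde{f}(s,\tilde{P}(s))\Vert_{\mathfrak{L}(H)}^{2}ds\,|\,\mathcal{F}_{t}]$, so that (\ref{Myeq2-11}) yields $\Vert\Delta P(t)\Vert_{\mathfrak{L}(H)}^{2}\le Cg$. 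Splitting $f(s,P(s))-\tilde{f}(s,\tilde{P}(s))=[f(s,P(s))-f(s,\tilde{P}(s))]+[f(s,\tilde{P}(s))-\tilde{f}(s,\tilde{P}(s))]$ and using the Lipschitz hypothesis (H2) on the first bracket, I obtain, with $\phi(t):=\Vert\Delta P(t)\Vert_{\mathfrak{L}(H)}^{2}$ and $\psi(t):=\mathbb{E}[\Vert\Delta\xi\Vert_{\mathfrak{L}(H)}^{2}+\int_{t}^{T}\Vert f(s,\tilde{P}(s))-\tilde{f}(s,\tilde{P}(s))\Vert_{\mathfrak{L}(H)}^{2}ds\,|\,\mathcal{F}_{t}]$, the inequality
\[
\phi(t)\le C\psi(t)+C\,\mathbb{E}\Big[\int_{t}^{T}\phi(s)\,ds\,\Big|\,\mathcal{F}_{t}\Big],\quad P\text{-a.s.}
\]

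The main work is to close this backward stochastic Gronwall inequality, and the subtle point is that (\ref{Myeq2-19}) demands the bound $C'\psi(t)$ with the truncated integral $\int_{t}^{T}$ rather than the cruder $\int_{0}^{T}$. The device I would use is that $\psi$ is a nonnegative supermartingale: the term $\mathbb{E}[\Vert\Delta\xi\Vert_{\mathfrak{L}(H)}^{2}|\mathcal{F}_{t}]$ is a martingale, and $t\mapsto\mathbb{E}[\int_{t}^{T}\Vert f(s,\tilde{P}(s))-\tilde{f}(s,\tilde{P}(s))\Vert_{\mathfrak{L}(H)}^{2}ds|\mathcal{F}_{t}]$ decreases in conditional mean because its integrand is nonnegative, whence $\mathbb{E}[\psi(s)|\mathcal{F}_{t}]\le\psi(t)$ for $s\ge t$. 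Iterating the inequality and using the tower property together with Fubini to rewrite $\mathbb{E}[\int_{t}^{T}\mathbb{E}[\int_{s}^{T}\phi(r)dr|\mathcal{F}_{s}]ds|\mathcal{F}_{t}]=\mathbb{E}[\int_{t}^{T}(r-t)\phi(r)dr|\mathcal{F}_{t}]$, each pass trades one factor of $\phi$ for $\psi$ with coefficient controlled through $\mathbb{E}[\int_{t}^{T}(s-t)^{k}\psi(s)ds|\mathcal{F}_{t}]\le\psi(t)(T-t)^{k+1}/(k+1)$. Summing the resulting convergent series gives $\mathbb{E}[\int_{t}^{T}\phi(s)ds|\mathcal{F}_{t}]\le C'\psi(t)$; the remainder after $n$ steps is of order $(C(T-t))^{n}/n!$ times $\mathbb{E}[\int_{t}^{T}\phi(s)ds|\mathcal{F}_{t}]$, which vanishes since $\phi$ is integrable because $P,\tilde{P}\in L_{\mathbb{F},w}^{2}(0,T;\mathfrak{L}(H))$. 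Substituting back yields $\phi(t)\le C\psi(t)+CC'\psi(t)$, which is precisely (\ref{Myeq2-19}).

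I expect this last step — the conditional, backward Gronwall iteration — to be the main obstacle: not the algebra but the bookkeeping needed to keep the nested conditional expectations and the $\int_{t}^{T}$ form intact, where the supermartingale property of $\psi$ and the integrability of $\phi$ are exactly what force convergence to the stated bound. I note a shorter alternative, namely that $\Delta P$ solves the BSIE (\ref{Myeq2-6}) with terminal $\Delta\xi$ and shifted driver $\hat{f}(s,p):=f(s,p+\tilde{P}(s))-\tilde{f}(s,\tilde{P}(s))$, which again satisfies (H2) with $\hat f(s,0)=f(s,\tilde P(s))-\tilde f(s,\tilde P(s))$, so that the a priori bound (\ref{Myeq3-23}) applied to this BSIE is literally (\ref{Myeq2-19}); this route is, however, circular if (\ref{Myeq3-23}) is itself deduced from the present theorem, so I would present the direct argument above.
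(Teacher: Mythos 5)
Your proof is correct, and its first half (subtracting the two BSIEs test-point-wise, then running the conditional Cauchy--Schwarz chain of Lemma \ref{Myth2-5} with the Lipschitz splitting of the driver) coincides with the paper's reduction to the conditional integral inequality $\phi(t)\leq C\psi(t)+C\,\mathbb{E}[\int_{t}^{T}\phi(s)\,ds\,|\,\mathcal{F}_{t}]$. Where you genuinely diverge is in closing this inequality. The paper avoids any stochastic Gronwall machinery: it fixes $r\leq t$, multiplies the squared inequality by $I_{A}$ for an arbitrary $A\in\mathcal{F}_{r}$, takes expectations (enlarging the data integral from $\int_{t}^{T}$ to $\int_{r}^{T}$, which is legitimate since the integrand is nonnegative), applies the classical deterministic Gronwall lemma to the function $t\mapsto\mathbb{E}[\phi(t)I_{A}]$ on $[r,T]$, and then recovers the conditional bound from the arbitrariness of $A$, finally setting $t=r$; this also explains why the statement keeps the truncated integral $\int_{t}^{T}$. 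You instead prove a conditional (backward stochastic) Gronwall lemma from scratch: iterating the inequality, using the tower property and conditional Tonelli to generate the factors $(r-t)^{k}/k!$, the supermartingale property of $\psi$ to keep every term dominated by $\psi(t)$, and the a.s. finiteness of $\mathbb{E}[\int_{t}^{T}\phi(s)\,ds\,|\,\mathcal{F}_{t}]$ (from $P,\tilde{P}\in L_{\mathbb{F},w}^{2}$) to kill the remainder, summing to $e^{C(T-t)}$. Both routes are sound; the paper's is shorter and outsources the analysis to off-the-shelf Gronwall, while yours is self-contained but demands extra bookkeeping you should make explicit if writing it out in full -- namely that the inequality for $\phi(s)$ holds a.s. for each fixed $s$, hence $ds\times dP$-a.e. by Fubini, so it may be substituted inside the time integral, and that jointly measurable (e.g.\ optional projection) versions of $s\mapsto\psi(s)$ and of the nested conditional expectations are used; these are routine and the paper glosses over the analogous points. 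You also correctly flag and avoid the circularity of invoking the a priori bound (\ref{Myeq3-23}), which in the paper is indeed a consequence of this very theorem.
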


\begin{proof}
For any $t\in\lbrack0,T],$ we have $P\text{-a.s.}$ that
\[
P(t)-\tilde{P}(t)=\mathbb{E}[L^{\ast}(t,T)(\xi-\tilde{\xi})L(t,T)+\int_{t}%
^{T}L^{\ast}(t,s)(f(s,P(s)-\tilde{P}(s)+\tilde{P}(s))-\tilde{f}(s,\tilde
{P}(s)))L(t,s)ds|\mathcal{F}_{t}].
\]
Applying Lemma \ref{Myth2-5}, we obtain%
\begin{align*}
\Vert P(t)-\tilde{P}(t)\Vert_{\mathfrak{L}(H)}^{2}  &  \leq C\{\mathbb{E}%
[\Vert\xi-\tilde{\xi}\Vert_{\mathfrak{L}(H)}^{2}+\int_{t}^{T}\Vert
f(s,\tilde{P}(s))-\tilde{f}(s,\tilde{P}(s))\Vert_{\mathfrak{L}(H)}%
^{2}ds|\mathcal{F}_{t}]\\
&  \ \ \ \, +\mathbb{E}[\int_{t}^{T}\Vert P(s)-\tilde{P}(s)\Vert
_{\mathfrak{L}(H)}^{2}ds|\mathcal{F}_{t}]\},\quad P\text{-a.s.}%
\end{align*}
Fix any $r\leq T$ and any $A\in\mathcal{F}_{r}$. For $t\in\lbrack r,T],$
multiplying by $I_{A}$ and taking expectation on both sides, we obtain that
\begin{align*}
\mathbb{E}[\Vert P(t)-\tilde{P}(t)\Vert_{\mathfrak{L}(H)}^{2}I_{A}]  &  \leq
C\{\mathbb{E}[(\Vert\xi-\tilde{\xi}\Vert_{\mathfrak{L}(H)}^{2}+\int_{r}%
^{T}\Vert f(s,\tilde{P}(s))-\tilde{f}(s,\tilde{P}(s))\Vert_{\mathfrak{L}%
(H)}^{2}ds)I_{A}]\\
&  \ \ \ \, +\int_{t}^{T}\mathbb{E}[\Vert P(s)-\tilde{P}(s)\Vert
_{\mathfrak{L}(H)}^{2}I_{A}]ds\}.
\end{align*}
Then an application of Gronwall's inequality yields
\[
\mathbb{E}[\Vert P(t)-\tilde{P}(t)\Vert_{\mathfrak{L}(H)}^{2}I_{A}]\leq
C\mathbb{E}[(\Vert\xi-\tilde{\xi}\Vert_{\mathfrak{L}(H)}^{2}+\int_{r}^{T}\Vert
f(s,\tilde{P}(s))-\tilde{f}(s,\tilde{P}(s))\Vert_{\mathfrak{L}(H)}^{2}%
ds)I_{A}],\quad t\in\lbrack r,T].
\]
From the arbitrariness of $A,$ this implies for $t\in\lbrack r,T]$ that
\[
\mathbb{E}[\Vert P(t)-\tilde{P}(t)\Vert_{\mathfrak{L}(H)}^{2}|\mathcal{F}%
_{r}]\leq C\mathbb{E}[\Vert\xi-\tilde{\xi}\Vert_{\mathfrak{L}(H)}%
^{2}|\mathcal{F}_{r}]+\mathbb{E}[\int_{r}^{T}\Vert f(s,\tilde{P}(s))-\tilde
{f}(s,\tilde{P}(s))\Vert_{\mathfrak{L}(H)}^{2}ds|\mathcal{F}_{r}],\quad
P\text{-a.s.}%
\]
Letting $t= r$, we obtain (\ref{Myeq2-19}).
\end{proof}

Now we prove Theorem \ref{Mainthm-1}.

\begin{proof}
We define the solution mapping $I: L_{\mathbb{F},w}^{2}(0,T;\mathfrak{L}%
(H))\to\ L_{\mathbb{F},w} ^{2}(0,T;\mathfrak{L}(H))$ by $I(p):=P$ for $p\in
L_{\mathbb{F},w}^{2}(0,T;\mathfrak{L}(H))$ with
\[
P(t):=\mathbb{E}[L^{\ast}(t,T)\xi L(t,T)+\int_{t}^{T}L^{\ast}%
(t,s)f(s,p(s))L(t,s)ds|\mathcal{F}_{t}],\quad t\in\lbrack0,T].
\]
In view of Lemma \ref{Myth2-5}, we have $I(p)\in L_{\mathbb{F},w}%
^{2}(0,T;\mathfrak{L}(H)).$ Thus, the mapping $I$ is well-defined.

Now we show that the mapping $I$ is a contraction on the interval
$[T-\delta,T]$ when $\delta>0$ is sufficiently small. Set $P:=I(p)$ and
$\tilde{P}:=I(\tilde{p})$ for $p,\tilde{p}\in L_{\mathbb{F},w}^{2}%
(0,T;\mathfrak{L}(H))$. From Theorem \ref{Myth2-10}, we have
\begin{align*}
\Vert P(t)-\tilde{P}(t)\Vert_{\mathfrak{L}(H)}^{2}  &  \leq C\mathbb{E}%
[\int_{t}^{T}\Vert f(s,p(s))-f(s,\tilde{p}(s))\Vert_{\mathfrak{L}(H)}%
^{2}ds|\mathcal{F}_{t}]\\
&  \leq C\mathbb{E}[\int_{t}^{T}\Vert p(s)-\tilde{p}(s)\Vert_{\mathfrak{L}%
(H)}^{2}ds|\mathcal{F}_{t}],\quad t\in\lbrack0,T].
\end{align*}
For any $0<\delta<T$, taking expectation on both sides and integrating over
time on $[T-\delta,T]$, we get
\[
\mathbb{E}[\int_{T-\delta}^{T}\Vert P(t)-\tilde{P}(t)\Vert_{\mathfrak{L}%
(H)}^{2}]\leq C\delta\mathbb{E}[\int_{T-\delta}^{T}\Vert p(s)-\tilde
{p}(s)\Vert_{\mathfrak{L}(H)}^{2}ds].
\]
So for sufficiently small $\delta>0$, we obtain a unique $P\in L_{\mathbb{F}%
,w}^{2}(T-\delta,T;\mathfrak{L}(H))$ such that $P=I(P)$ in $L_{\mathbb{F}%
,w}^{2}(T-\delta,T;\mathfrak{L}(H))$, which is also
\begin{equation}
P(t)=\mathbb{E}[L^{\ast}(t,T)\xi L(t,T)+\int_{t}^{T}L^{\ast}
(t,s)f(s,P(s))L(t,s)ds|\mathcal{F}_{t}],\quad P\text{-a.s., a.e. on
$[T-\delta,T].$} \label{Myeq3-24}%
\end{equation}

We take
\[
\tilde{P}(t):=\mathbb{E}[L^{\ast}(t,T)\xi L(t,T)+\int_{t}^{T}L^{\ast
}(t,s)f(s,P(s))L(t,s)ds|\mathcal{F}_{t}],\quad t\in\lbrack T-\delta,T].
\]
Then $\tilde{P}$ satisfies%
\[
\tilde{P}(t)=\mathbb{E}[L^{\ast}(t,T)\xi L(t,T)+\int_{t}^{T}L^{\ast
}(t,s)f(s,\tilde{P}(s))L(t,s)ds|\mathcal{F}_{t}],\quad P\text{-a.s}.,\ \forall
t\in\lbrack T-\delta,T],
\]
and thus is a solution of BSIE (\ref{Myeq2-1}) in the meaning of Definition
\ref{Def1-1} on $[T-\delta,T]$. The uniqueness of $\tilde{P}$ in this sense
follows from that in the meaning of (\ref{Myeq3-24}). Indeed, on $[T-\delta,
T]$, if $\tilde{P}^{\prime}$ is another solution of BSIE (\ref{Myeq2-1}) in
the sense of Definition \ref{Def1-1}, then they are both solutions of
(\ref{Myeq2-1}) in the meaning of (\ref{Myeq3-24}). Thus $\tilde{P}=\tilde
{P}^{\prime}$ $P\text{-a.s., a.e.}$ on $[T-\delta, T]$. From the identity
(\ref{Myeq2-6}) on $[T-\delta, T]$, we then obtain that $\tilde{P}%
(t)=\tilde{P}^{\prime}(t)$ $P$-a.s$.,$ for all $t\in\lbrack T-\delta,T].$

Denoting $\tilde{P}$ by $P$, we can apply a backward iteration procedure to
obtain a $P\in L_{\mathbb{F},w}^{2}(0,T;\mathfrak{L}(H))$ such that
\begin{equation}
P(t)=\mathbb{E}[L^{\ast}(t,T)\xi L(t,T)+\int_{t}^{T}L^{\ast}%
(t,s)f(s,P(s))L(t,s)ds|\mathcal{F}_{t}], \quad P\text{-a.s}.,\ \forall
t\in\lbrack0,T], \label{Myeq3-25}%
\end{equation}
since the constant $\delta$ can be chosen to be independent of the terminal
time in each step. The uniqueness of $P$ follows from the one on each interval.
\end{proof}

We end this section with the following continuity of $P$, and the proof is
given in the appendix. We first note that, if $P$ is a solution of
(\ref{Myeq2-1}), then for each $(u,v)\in H\times H$,
\begin{equation}%
\begin{split}
\langle P(t)u,v\rangle &  =\langle\mathbb{E}[L^{\ast}(t,T)\xi L(t,T)+\int%
_{t}^{T}L^{\ast}(t,s)f(s,P(s))L(t,s)ds|\mathcal{F}_{t}]u,v\rangle\\
&  =\mathbb{E}[\langle\xi L(t,T)u,L(t,T)v\rangle+\int_{t}^{T}\langle
f(s,P(s))L(t,s)u,L(t,s)v\rangle ds|\mathcal{F}_{t}],\quad P\text{-a.s.}%
\end{split}
\label{Myeq2-25}%
\end{equation}
From this and an approximation of simple random variables, we can obtain that
(\ref{Myeq2-25}) holds for $(u,v)\in L^{4}(\mathcal{F}_{t},H)\times
L^{4}(\mathcal{F}_{t},H)$.

\begin{proposition}
\label{Myth2-7} For some $\alpha\geq1,$ suppose $(H1)$, $(H2)$ and

\begin{description}
\item[$(H3)$] $(\xi,f(\cdot,\cdot,0))\in L_{w}^{2\alpha}(\mathcal{F}
_{T},\mathfrak{L}(H))\times L_{\mathbb{F},w}^{2,2\alpha}(0,T;\mathfrak{L}(H))$
and there exists some constant $\Lambda_{\alpha}\geq0$ such that for each
$0\leq t\leq r\leq s\leq T$ and $u\in L^{4\alpha}(\mathcal{F}_{t},H),$ it
holds that $L(t,s)=L(t,r)L(r,s)$,
\[
\mathbb{E}[\left\Vert L(t,s)u\right\Vert _{H}^{4\alpha}|\mathcal{F}_{t}%
]\leq\Lambda_{\alpha}\Vert u\Vert_{H}^{4\alpha}\ P\text{-a.s.} \ \text{ and
}[t,T]\ni s\mapsto L(t,s)u\text{ is strongly continuous in }L^{4\alpha
}(\mathcal{F}_{T},H)\text{.}%
\]

\end{description}

Let $P$ be the solution of (\ref{Myeq2-1}). Then, for each $t\in\lbrack0,T)$
and $u,v\in L^{4\alpha}(\mathcal{F}_{t},H),$ we have
\[
\lim_{\delta\downarrow0} \mathbb{E}[|\langle P(t+\delta)u,v\rangle-\langle
P(t)u,v\rangle|^{\alpha}]=0.
\]

\end{proposition}

\begin{remark}
\label{rem1-10}%
\upshape{ According to similar proofs, the discussions and results in this section hold for
		a more general setting that the bilinear framework is replaced by the
		$k$-linear framework, for $k=1,2,3\cdots,$ for possibly different separable
		Hilbert spaces (and even more generally, Banach spaces with Schauder basis)
		$H_{j}$ and stochastic evolution operators $L_{j}(t,s)$ on $H_{j}$, for $1\leq
		j\leq k$. We only give a short description of BSIEs for convenience as follows.
		
		We make use of the similar weakly measurability meaning for $\mathfrak{L}_{k}		(H_{1}\times H_{2}\times\cdots\times H_{k})$-valued random variables and
		stochastic processes as in the bililnear case (with a direct modificastion
		from the case of $k=2$ to the general $k)$.\ Given terminal $\xi\in L_{w}		^{2}(\mathcal{F}_{T},\mathfrak{L}_{k}(H_{1}\times H_{2}\times\cdots\times
		H_{k}))$, generator $f(w,t,p):\Omega\times\lbrack0,T]\times\mathfrak{L}		_{k}(H_{1}\times H_{2}\times\cdots\times H_{k})\rightarrow\mathfrak{L}		_{k}(H_{1}\times H_{2}\times\cdots\times H_{k})\mathcal{\ }$satisfying the
		Lipschitz condition and $f(\cdot,\cdot,0)\in L_{\mathbb{F},w}^{2}		(0,T;\mathfrak{L}_{k}(H_{1}\times H_{2}\times\cdots\times H_{k})),$ and
		stochastic evolution operator $L_{j}(t,s)$\ satisfying $\mathbb{E}[\left\Vert
		L_{j}(t,s)u\right\Vert _{H}^{2k}|\mathcal{F}_{t}]\leq\Lambda\Vert u\Vert
		_{H}^{2k}$\ for some $\Lambda\geq0,$ for $j=1,2,\cdots,k;$ other measurability
		assumptions are imposed similarly as in $(H1)$ and $(H2)$ (with some possible
		direct modifications).
		
		For an $\eta\in L_{w}^{2}(\mathcal{F}_{s},\mathfrak{L}_{k}(H_{1}\times
		H_{2}\times\cdots\times H_{k})),$ we define the mapping
		\[
		\eta(L_{1}(t,s)\cdot,L_{2}(t,s)\cdot,\cdots,L_{k}(t,s)\cdot)\in\mathfrak{L}		_{k}(H_{1}\times H_{2}\times\cdots\times H_{k};L^{1}(\mathcal{F}_{s}))
		\]
		by
		\[
		(u_{1},u_{2},\cdots,u_{k})\mapsto\eta(L_{1}(t,s)u_{1},L_{2}(t,s)u_{2}		,\cdots,L_{k}(t,s)u_{k}),\quad\forall(u_{1},u_{2},\cdots,u_{k})\in
		H_{1}\times H_{2}\times\cdots\times H_{k}.
		\]
		For a mapping $g\in\mathfrak{L}_{k}(H_{1}\times H_{2}\times\cdots\times
		H_{k};L_{\mathbb{F}}^{1}(t,T)),$ we define $\int_{t}^{T}g(s)ds\in
		\mathfrak{L}_{k}(H_{1}\times H_{2}\times\cdots\times H_{k};L^{1}		(\mathcal{F}_{T}))$ by
		\[
		(\int_{t}^{T}g(s)ds)(u_{1},u_{2},\cdots,u_{k}):=\int_{t}^{T}g(s)(u_{1}		,u_{2},\cdots,u_{k})ds\ \ P\text{-a.s},\quad\forall(u_{1},u_{2},\cdots
		,u_{k})\in H_{1}\times H_{2}\times\cdots\times H_{k}.
		\]
		Then for any $h\in L_{\mathbb{F},w}^{2}(t,T;\mathfrak{L}_{k}(H_{1}\times
		H_{2}\times\cdots\times H_{k})),$
		\[
		h(s)(L_{1}(t,s)\cdot,L_{2}(t,s)\cdot,\cdots,L_{k}(t,s)\cdot)\in\mathfrak{L}		_{k}(H_{1}\times H_{2}\times\cdots\times H_{k};L_{\mathbb{F}}^{1}(t,T)).
		\]
		Therefore,
		\begin{align*}
			&  \xi(L_{1}(t,T)\cdot,L_{2}(t,T)\cdot,\cdots,L_{k}(t,T)\cdot)+\int_{t}			^{T}f(s,P(s))(L_{1}(t,T)\cdot,L_{2}(t,T)\cdot,\cdots,L_{k}(t,T)\cdot)ds\\
			&  \ \ \ \ \in\mathfrak{L}_{k}(H_{1}\times H_{2}\times\cdots\times
			H_{k};L^{1}(\mathcal{F}_{T})).
		\end{align*}
		We consider the $\mathfrak{L}_{k}(H_{1}\times H_{2}\times\cdots\times H_{k}		)$-valued conditionally expected BSIE
		\begin{align*}
			P(t) &  =\mathbb{E}[\xi(L_{1}(t,T)\cdot,L_{2}(t,T)\cdot,\cdots,L_{k}			(t,T)\cdot)+\int_{t}^{T}f(s,P(s))(L_{1}(t,T)\cdot,L_{2}(t,T)\cdot,\cdots
			,L_{k}(t,T)\cdot)ds|\mathcal{F}_{t}],\\
			&  \text{ }t\in\lbrack0,T].
		\end{align*}
		By a solution of it, we mean a process $P\in L_{\mathbb{F},w}^{2}		(0,T;\mathfrak{L}_{k}(H_{1}\times H_{2}\times\cdots\times H_{k}))$ satisfying:
		for $t\in\lbrack0,T],$ it hold $P$-a.s. that in $\mathfrak{L}_{k}(H_{1}\times
		H_{2}\times\cdots\times H_{k})$
		\[
		P(t)=\mathbb{E}[\xi(L_{1}(t,T)\cdot,L_{2}(t,T)\cdot,\cdots,L_{k}		(t,T)\cdot)+\int_{t}^{T}f(s,P(s))(L_{1}(t,T)\cdot,L_{2}(t,T)\cdot,\cdots
		,L_{k}(t,T)\cdot)ds|\mathcal{F}_{t}].
		\]
		This equation can be solved by firstly defining and constructing the
		$k$-linear operator $\mathfrak{L}_{k}(H_{1}\times H_{2}\times\cdots\times
		H_{k})$-valued conditional expectations, and then making use a contraction
		argument, similarly as in the bilinear situation (Whereas in the multilinear
		situation, it seems awkward to introduce the formal adjoint operators for $L,$
		which will make the notations complicated$)$.
		
		This is a multilinear operator-valued backward stochastic evolution equations.
}
\end{remark}
\begin{remark}	\upshape{If we strength the growth assumption for $L$ in (H1) to: for each $t$, $L(t,s)u$ is continuous in $s$ and
		\[
		\mathbb{E}[\sup_{t\leq s\leq T}\left\Vert L(t,s)u\right\Vert _{H}^{4}|\mathcal{F}_{t}]\leq
		\Lambda\Vert u\Vert_{H}^{4},\quad P\text{-a.s}.;
		\]
		or more generally,  for each $t$,
		\[
		\mathbb{E}[\underset{0\leq t\leq T}{ess\sup
		}
		\left\Vert L(t,s)u\right\Vert _{H}^{4}|\mathcal{F}_{t}]\leq
		\Lambda\Vert u\Vert_{H}^{4},\quad P\text{-a.s}.;
		\]
		By a standard modifications of the proofs, we can weaken the assumption $f(\cdot
		,\cdot,0)\in L_{\mathbb{F},w}^{2}(0,T;\mathfrak{L}(H))$ of $f$ to   $f(\cdot
		,\cdot,0)\in L_{\mathbb{F},w}^{1,2}(0,T;\mathfrak{L}(H))$ in (H2) for the well-posedness result and estimates of the solutions, as well as other results (For Proposition \ref{Myth2-7}, we also need a similar modifiction for the condition $(H3)$) obtained for the BSIEs in this paper. Here, $L_{\mathbb{F},w}^{1,2}(0,T;\mathfrak{L}(H))$ is the space of $\mathfrak{L}(H)$-valued
		weakly progressively measurable processes $F(\cdot)$ with norm  $\Vert F\Vert_{L_{\mathbb{F},w}^{1,2}%
			(0,T;\mathfrak{L}(H))}=\{\mathbb{\mathbb{E}}[(\int_{0}^{T}\Vert F(t)\Vert
		_{\mathfrak{L}(H)}{d}t)^{{2}}]\}^{\frac{1}{2}}$.
		
		We illustrate this change for the first condition in the proof of Lemma \ref{Myth2-5}:
		\begin{align*}
			&  |\mathbb{E}[\int_{t}^{T}\langle f(s,p(s))L(t,s)u,L(t,s)v\rangle
			ds|\mathcal{F}_{t}]|\\
			&  \leq|\mathbb{E}[\sup_{t\leq s\leq T}\Vert L(t,s)u\Vert_{H}\sup_{t\leq s\leq
				T}\Vert L(t,s)v\Vert_{H}\int_{t}^{T}\Vert f(s,p(s))\Vert_{\mathfrak{L}%
				(H)}ds|\mathcal{F}_{t}]|\\
			&  \leq(\mathbb{E}[\sup_{t\leq s\leq T}\Vert L(t,s)u\Vert_{H}^{4}%
			|\mathcal{F}_{t}])^{\frac{1}{4}}(\mathbb{E}[(\int_{t}^{T}\Vert f(s,p(s))\Vert
			_{\mathfrak{L}(H)}ds)^{2}|\mathcal{F}_{t}])^{\frac{1}{2}}(\mathbb{E}%
			[\sup_{t\leq s\leq T}\Vert L(t,s)v\Vert_{H}^{4}|\mathcal{F}_{t}])^{\frac{1}%
				{4}}\\
			&  \leq C\Vert u\Vert_{H}\Vert v\Vert_{H}\{(\mathbb{E}[(\int_{t}^{T}\Vert
			f(s,0)\Vert_{\mathfrak{L}(H)}ds)^{2}+\int_{t}^{T}\Vert p(s)\Vert
			_{\mathfrak{L}(H)}^{2}ds|\mathcal{F}_{t}])^{\frac{1}{2}}\},\quad P\text{-a.s.}%
		\end{align*}
		For reader's convenience, we  present the improved result for the well-posedness of the BSIEs: Under one of the above new conditions, there
		exists a unique solution $P$ to BSIE (\ref{Myeq2-1}).
		Moreover, for each $t\in\lbrack0,T],$
		\begin{equation}\label{Myeq5-1}
			\Vert P(t)\Vert_{\mathfrak{L}(H)}^{2}\leq C\mathbb{E}[\Vert\xi\Vert
			_{\mathfrak{L}(H)}^{2}+(\int_{t}^{T}\Vert f(s,0)\Vert_{\mathfrak{L}(H)}%
			ds)^2|\mathcal{F}_{t}],\quad P\text{-a.s}., 
		\end{equation}
		for some constant $C$ depending on $\Lambda$ and $\lambda$.
		
		Obviously, this change also holds for Remark \ref{rem1-10}.
		
	}
\end{remark}
\subsection{It\^{o}'s formula}

A typical example and main prototype of the stochastic evolution operator $L$
is the formal solution of forward operator-valued SDEs, which can be
rigorously defined as the solution map of forward vector-valued SEEs. In this
section, we shall derive an It\^{o}'s formula for the product of BSIEs with
two forward SEEs when $L$ takes this concrete form. It is needed in the
derivation of the maximum principle.

\subsubsection{Evolution operators associated to forward SEEs}

Let $V$ be a separable Hilbert space densely embedded in $H$. Denote $V^{\ast
}:=\mathfrak{L}(V;\mathbb{R}),$ then $V\subset H\subset V^{\ast}$ form a
Gelfand triple. We denote by $\langle\cdot,\cdot\rangle_{\ast}$ the duality
between $V^{\ast}$ and $V$.

Let $w:=\{w(t)\}_{t\geq0}$ be a one-dimensional standard Brownian motion with
respect to $\mathbb{F}$. Consider the following linear homogeneous SEE on $[t,T]$:%
\begin{equation}%
\begin{cases}
{d}u^{t,u_{0}}(s) & =A(s)u^{t,u_{0}}(s){d}s+B(s)u^{t,u_{0}}(s){d}w(s),\quad
s\in\lbrack t,T],\\
u^{t,u_{0}}(t) & =u_{0},
\end{cases}
\label{Myeq2-17}%
\end{equation}
where $u_{0}\in L^{2}(\mathcal{F}_{t},H)$ and $(A,B):[0,T]\times
\Omega\rightarrow\mathfrak{L}(V;V^{\ast}\times H).$

\begin{remark}
	\upshape{We only write the one-dimensional Brownian motion case for simplicity
	of presentation. With direct modifications, the results throughout this paper still hold for the more general case that $w$ is a Hilbert space $K$-valued cylindrical $Q$-Brownian motion (including multi-dimensional Brownian motion, finite-trace $Q$-Brownian motion, cylinderical Brownian motion as special cases) and  the integrands $f$ takes
	valued in the Hilbert-Schmidt space $\mathcal{L}_{2}(Q^{\frac{1}{2}
	}(K),H)$; see \cite{lsw23} and \cite{LZ21} for more discussions on this direction.}
\end{remark}

We make the following assumption.
\begin{description}
\item[$(H4)$] For each $u\in V,$ $A(t,\omega)u$ and $B(t,\omega)u$ are
progressively measurable and satisfying: There exist some constants $\delta>0$
and $K\geq0$ such that the following two assertions hold: for each $t,\omega$
and $u\in V$,

\begin{description}
\item[\rm{{{(i)}}}] coercivity condition:
\[
2\langle A(t,\omega)u,u\rangle_{\ast}+\Vert B(t,\omega)u\Vert_{H}^{2}%
\leq-\delta\Vert u\Vert_{V}^{2}+K\Vert u\Vert_{H}^{2}\quad\text{and}\quad\Vert
A(t,\omega)u\Vert_{V^{\ast}}\leq K\Vert u\Vert_{V};
\]

\item[{\rm{{(ii)}}}] quasi-skew-symmetry condition:
\[
|\langle B(t,\omega)u,u\rangle|\leq K\Vert u\Vert_{H}^{2}.
\]

\end{description}
\end{description}

From \cite{KR81}, Equation (\ref{Myeq2-17}) has a unique solution $u^{t,u_{0}%
}(\cdot)\in L_{\mathbb{F}}^{2}(t,T;V)\cap S_{\mathbb{F}}^{2}(t,T;H),$ where
$S_{\mathbb{F}}^{2}(t,T;H)$ is the space of adapted $H$-valued processes $y$
with continuous paths such that $\mathbb{\mathbb{E}}[\sup_{t\leq s\leq T}\Vert
y(s)\Vert_{H}^{2}]<\infty.$ Through this solution, we define a stochastic
evolution operator $L_{A,B}$ as follows:
\begin{equation}
L_{A,B}(t,s)(u_{0}):=u^{t,u_{0}}(s)\in L^{2}(\mathcal{F}_{s},H),\quad
\text{for}\ t\leq s\leq T\ \text{and}\ u_{0}\in L^{2}(\mathcal{F}%
_{t},H)\text{.} \label{Myeq1-5}%
\end{equation}
From the basic estimates for SEEs, it satisfies the
assumptions $(H1)$ and $(H3)$. In fact, in general, if $y$ is the solution to the SEE
\[%
\begin{cases}
	{d}y(s) & =[A(s)y(s)+a(s)]{d}s+[B(s)y(s)+b(s)]{d}w(s),\quad s\in\lbrack
	t,T],\\
	y(t) & =y_{0},
\end{cases}
\]
for $a,b\in L_{\mathbb{F}}^{1,2\alpha}(t,T;H)\times
L_{\mathbb{F}}^{2,2\alpha}(t,T;H)$ and $y_{0}\in L^{2\alpha}(\mathcal{F}%
_{t},H)$, with $\alpha\geq1$ and $L_{\mathbb{F}}^{1,2\alpha}(0,T;H)$ being the
space of $H$-valued progressively measurable processes $y(\cdot)$ with  norm
$\Vert y\Vert_{L_{\mathbb{F}}^{1,2\alpha}(0,T;H)}=\{\mathbb{\mathbb{E}}%
[(\int_{0}^{T}\Vert y(t)\Vert_{H}{d}t)^{2\alpha}]\}^{\frac{1}{2\alpha}},$ then
there exists a constant $C>0$ depending on $\delta$, $K$ and $\alpha$ (see
\cite[Lemma 3.1]{DM13}) such that
\begin{equation}
	\mathbb{E}[\sup_{s\in\lbrack t,T]}\left\Vert y(s)\right\Vert _{H}^{2\alpha
	}]\leq C\mathbb{E}[\Vert y_{0}\Vert_{H}^{2\alpha}+(\int_{t}^{T}\Vert
	a(s)\Vert_{H}ds)^{2\alpha}+(\int_{t}^{T}\Vert b(s)\Vert_{H}^{2}ds)^{\alpha
	}].\label{SEE-estimate}%
\end{equation}
This implies
\[
\mathbb{E}[\sup_{s\in\lbrack t,T]}\left\Vert y(s)\right\Vert _{H}^{2\alpha
}|\mathcal{F}_{t}]\leq C\{\Vert y_{0}\Vert_{H}^{2\alpha}+\mathbb{E}[(\int%
_{t}^{T}\Vert a(s)\Vert_{H}ds)^{2\alpha}+(\int_{t}^{T}\Vert b(s)\Vert_{H}%
^{2}ds)^{\alpha}|\mathcal{F}_{t}]\},
\] 
by noting that, with  $y$ denoted by $y^{t,y_{0};a,b},$  for any $D\in\mathcal{F}_{t},$
\begin{align*}
	\mathbb{E}[I_{D}\cdot\sup_{s\in\lbrack t,T]}\left\Vert y^{t,y_{0};a,b}%
	(s)\right\Vert _{H}^{2\alpha}] &  =\mathbb{E}[\sup_{s\in\lbrack t,T]}%
	\left\Vert y^{t,I_{D}\cdot y_{0};I_{D}\cdot a,I_{D}\cdot b}(s)\right\Vert _{H}^{2\alpha}]\\
	&  \leq C\mathbb{E}[\Vert I_{D}\cdot y_{0}\Vert_{H}^{2\alpha}+(\int_{t}^{T}\Vert
	I_{D}\cdot a(s)\Vert_{H}ds)^{2\alpha}+(\int_{t}^{T}\Vert I_{D}\cdot b(s)\Vert_{H}%
	^{2}ds)^{\alpha}]\\
	&  =C\mathbb{E}[I_{D}\cdot(\Vert y_{0}\Vert_{H}^{2\alpha}+\mathbb{E}[(\int_{t}%
	^{T}\Vert a(s)\Vert_{H}ds)^{2\alpha}+(\int_{t}^{T}\Vert b(s)\Vert_{H}%
	^{2}ds)^{\alpha}|\mathcal{F}_{t}])].
\end{align*}
Furthermore, the continuity  in $(H3)$ for  $L_{A,B}$ follows from the continuity property of solutions for SEEs.

\begin{remark}
\upshape{
The operator $L_{A,B}$ can be regarded as the formal solution of the following
$\mathfrak{L}(H)$-valued SDEs
\begin{equation}\begin{cases}
{d}L_{A,B}(t,s) & =A(s)L_{A,B}(t,s){d}t+B(s)L_{A,B}(t,s){d}w(s),\quad
s\in [t,T],\\
L_{A,B}(t,t) & =I_{d}.
\end{cases}
\label{Myeq3-7}\end{equation}
When $H$ is finite dimensional (i.e., $H=\mathbb{R}^{n}$ for some integer $n\geq 1$, then $\mathfrak{L}(H)=\mathfrak{L}(\mathbb{R}^{n})=\mathbb{R}^{n\times n}$), it is indeed the
classical (matrix-valued) solution of  (\ref{Myeq3-7}). In the
infinite-dimensional situation, such an equation is far from being well understood (it is not known that it admits an $\mathfrak{L}(H)$-valued solution).}
\end{remark}

Now, in virtue of Theorem \ref{Mainthm-1}, the $\mathfrak{L}(H)$-valued BSIE
\begin{equation}
P(t)=\mathbb{E}[L_{A,B}^{\ast}(t,T)\xi L_{A,B}(t,T)+\int_{t}^{T}L_{A,B}^{\ast
}(t,s)f(s,P(s))L_{A,B}(t,s)ds|\mathcal{F}_{t}],\quad t\in\lbrack0,T],
\label{Myeq3-2}%
\end{equation}
has a unique solution $P\in L_{\mathbb{F},w}^{2}(0,T;\mathfrak{L}(H))$.

In the following, we shall always assume that the filtration $\mathbb{F}%
=\{\mathcal{F}_{t}\}_{0\leq t\leq T}$ is the augmented natural filtration of
Brownian motion $\{w(t)\}_{t\geq0}$.

\begin{remark}
\label{Rm2-2}{\upshape Let $H$ be finite dimensional. }

{\upshape
\textrm{{ (i)}} BSIE (\ref{Myeq3-2}) is equivalent to the following
matrix-valued BSDE
\begin{equation}%
\begin{split}
P(t)  &  =\xi+\int_{t}^{T}[A^{\ast}(s)P(s)+P(s)A(s)+B^{\ast}%
(s)P(s)B(s)+B^{\ast}(s)Q(s)+Q(s)B(s)\\
&  \ \ \ +f(s,P(s))]ds-\int_{t}^{T}Q(s)dw(s).
\end{split}
\label{Myeq1-10}%
\end{equation}
In fact, recall that in the matrix case,  $L_{A,B}(t,s)$ is the solution of matrix-valued SDE
(\ref{Myeq3-7}) and $L^*_{A,B}(t,s)$ is its transpose which satisfies
\[\begin{cases}
	{d}L^*_{A,B}(t,s) & =L^*_{A,B}(t,s)A^*(s){d}s+L^*_{A,B}(t,s)B^*(s){d}w(s),\quad
	s\in [t,T],\\
	L^*_{A,B}(t,t) & =I_{d}.
\end{cases}
\] Then using It\^{o}'s formula to $L_{A,B}^{\ast}(t,s)P(s)L_{A,B}(t,s)$
 on $[t,T]$, we get
\begin{align*}
P(t)  &  =L_{A,B}^{\ast}(t,T)\xi L_{A,B}(t,T)+\int_{t}^{T}L_{A,B}^{\ast
}(t,s)f(s,P(s))L_{A,B}(t,s)ds\\
&  \ \ \ -\int_{t}^{T}L_{A,B}^{\ast}(t,s)(P(s)B(s)+Q(s)+B^{\ast}%
(s)P(s))L_{A,B}(t,s)dw(s).
\end{align*}
Taking conditional expectation on both sides, we obtain
\begin{equation}
P(t)=\mathbb{E}[L_{A,B}^{\ast}(t,T)\xi L_{A,B}(t,T)+\int_{t}^{T}L_{A,B}^{\ast
}(t,s)f(s,P(s))L_{A,B}(t,s)ds|\mathcal{F}_{t}]. \label{Myeq3-09}%
\end{equation}
}

{\upshape Naturally, BSDE is preferred in the characterization of the adjoint
process. Unfortunately, in an infinite-dimensional space without separability,
the stochastic integral and unbounded operators in BSDE (\ref{Myeq1-10}) find
difficult to be well defined. This is why we appeal to a conditionally
expected BSIE to characterize the adjoint process.}

\upshape{
\textrm{{ (ii)}} We can also give the integral equation of the following
matrix-valued BSDEs in a more general form, which will be used in the
recursive optimal control problem latter. Consider
\begin{align*}
P(t)  &  =\xi+\int_{t}^{T}[A^{\ast}(s)P(s)+P(s)A(s)+B^{\ast}(s)P(s)B(s)+B^{\ast}(s)Q(s)+Q(s)B(s)+\beta(s)Q(s)\\
&  \ \ \ +f(s,P(s))]ds-\int_{t}^{T}Q(s)dw({s}),
\end{align*}
where $\beta\in L_{\mathbb{F}}^{\infty}(0,T)$. We can write it into
\begin{align*}
P(t)  &  =\xi+\int_{t}^{T}[(A(s)-\frac{\beta(s)}{2}B(s)-\frac{\beta^{2}(s)}{8}I_{d})^{\ast}P(s)+P(s)(A(s)-\frac{\beta(s)}{2}B(s)-\frac{\beta^{2}(s)}{8}I_{d})\\
&  \ \ \ +(B(s)+\frac{\beta(s)}{2}I_{d})^{\ast}P(s)(B(s)+\frac{\beta(s)}{2}I_{d})+(B(s)+\frac{\beta(s)}{2}I_{d})^{\ast}Q(s)+Q(s)(B(s)\\
&  \ \ \ +\frac{\beta(s)}{2}I_{d})+f(s,P(s))]ds-\int_{t}^{T}Q(s)dw({s}).
\end{align*}
Then from  (i), we have
\[
P(t)=\mathbb{E}[\tilde{L}^{\ast}(t,T)\xi\tilde{L}(t,T)+\int_{t}^{T}\tilde
{L}^{\ast}(t,s)f(s,P(s))\tilde{L}^{\ast}(t,s)ds|\mathcal{F}_{t}]
\]
with\[
\tilde{L}(t,s):=L_{\tilde{A},\tilde{B}}(t,s),\quad\text{for}\ \ \tilde
{A}(s):=A(s)-\frac{\beta(s)}{2}B(s)-\frac{\beta^{2}(s)}{8}I_{d}\ \ \text{and}\ \ \tilde{B}(s):=B(s)+\frac{\beta(s)}{2}I_{d}.
\]
}
\end{remark}

\begin{remark}
\upshape{
	The above (\ref{Myeq2-17}) is the solution of SEEs under the variational solution framework. Also as  examples, in the same way, the solution of vector-valued SEEs under other framework (or conditions, settings) may also generates  such kind of stochastic evolution operator $L$  that satisfies $(H1)$, and then the corresponding conditionally expected BSIE is well-posed. 
	
	We give a detailed mathematical description on mild solution (semigroup solution) case.
	Consider the SEEs
\begin{equation*}%
	\begin{cases}
		{d}u^{t,u_{0}}(s) & =Au^{t,u_{0}}(s){d}s+\bar{A}(s)u^{t,u_{0}}(s){d}s+\bar{B}(s)u^{t,u_{0}}(s){d}w(s),\quad
		s\in\lbrack t,T],\\
		u^{t,u_{0}}(t) & =u_{0},
	\end{cases}
\end{equation*}
where $u_{0}\in L^{2}(\mathcal{F}_{t},H)$, the operator $A:D(A)\subset H\rightarrow H$ is the infinitesimal
generator of a $C_{0}$-semigroup $\{\mathrm{e}^{tA}\in \mathfrak{L}(H);t\geq0\}$, and $\bar{A},\bar{B}:[0,T]\times
\Omega\rightarrow\mathfrak{L}(H)$ are bounded and satisfying: for each $u\in H$, $\bar{A}u,\bar{B}u$ are progressively measurable.
This SEE have a unique solution $u^{t,u_{0}}(\cdot)\in S_{\mathbb{F}}^{2}(t,T;H)$  (see  \cite{DZ92}).  Using the same approach as in (\ref{Myeq1-5}), it also defines
a stochastic evolution operator $L$ satisfying the assumption $(H1)$. Then the corresponding conditionally expected BSIE is also well-posed.
In this kind of concrete mild solution situation, in \cite{GT05} the authors also describe a variation of constant formula characterization for their \textit{generalized solutions}. Compared with that, our BSIE is an operator-valued equation (i.e., the equation itself is operator-valued) and has a fully nonlinear generator for $P$.
	}
\end{remark}

\subsubsection{It\^{o}'s formula in a weak formulation}

Now we derive an It\^{o}'s formula by an approximation argument for the
product of the operator-valued BSIE
\begin{equation}
P(t)=\mathbb{E}[\tilde{L}^{\ast}(t,T)\xi\tilde{L}(t,T)+\int_{t}^{T}\tilde
{L}^{\ast}(t,s)f(s,P(s))\tilde{L}(t,s)ds|\mathcal{F}_{t}],\quad t\in
\lbrack0,T], \label{Myeq4-11}%
\end{equation}
and two forward SEEs in the form of
\begin{equation}%
\begin{cases}
{d}x(t) & =A(t)x(t){d}t+[B(t)x(t)+\zeta(t)I_{E_{\rho}}(t)]{d}w(t),\quad
t\in[0,T],\\
x(0) & =0,
\end{cases}
\label{Myeq3-9}%
\end{equation}
where, for some $\beta\in L_{\mathbb{F} }^{\infty}(0,T)$,
\[
\tilde{L}(t,s):=L_{\tilde{A},\tilde{B}}(t,s)\quad\text{with}\ \ \tilde
{A}=A(s)+\frac{\beta(s)}{2}B(s)-\frac{\beta^{2}(s)}{8}I_{d}\ \ \text{and}%
\ \ \tilde{B}=B(s)+\frac{\beta(s)}{2}I_{d},
\]
and $\zeta$ is an $H$-valued process, $E_{\rho}=[t_{0},t_{0}+\rho)$ for some
$t_{0}\in\lbrack0,T)$ and $\rho\in\lbrack0,T-t_{0}].$

Then we have the following It\^{o}'s formula. The proof is lengthy and
technical, and is thus put in the appendix.

\begin{theorem}
\label{Myth3-7} Let Assumptions $(H2)$ and $(H4)$ be satisfied and for some
$\alpha>1$,
\begin{equation}
\label{Myeq3-13}(\xi,f(\cdot,\cdot,0),\zeta)\in L_{w}^{2\alpha}(\mathcal{F}%
_{T},\mathfrak{L}(H))\times L_{\mathbb{F},w}^{2,2\alpha}(0,T;\mathfrak{L}%
(H))\times L_{\mathbb{F}}^{4\alpha}(0,T;H).
\end{equation}
Then
\begin{equation}%
\begin{split}
\langle P(t)x(t),x(t)\rangle+\sigma(t) =  &  \langle\xi x(T),x(T)\rangle
+\int_{t}^{T}[\langle f(s,P(s))x(s),x(s)\rangle+ \beta(s)\mathcal{Z}(s)\\
&  -\langle P(s)\zeta(s),\zeta(s)\rangle I_{E_{\rho}}(s)]ds-\int_{t}%
^{T}\mathcal{Z}(s)dw({s}),\quad t\in\lbrack0,T],
\end{split}
\label{Myeq3-5}%
\end{equation}
for a unique couple of processes $(\sigma,\mathcal{Z})\in L_{\mathbb{F}%
}^{\alpha}(0,T)\times L_{\mathbb{F}}^{2,\alpha}(0,T)$ satisfying%
\begin{align}
&  \sup_{t\in\lbrack0,T]}\mathbb{E}[|\sigma(t)|^{\alpha}]=o(\rho^{\alpha
}),\label{Myeq2-16}\\
&  \mathbb{E}[(\int_{0}^{T}|\mathcal{Z}(t)|^{2}dt)^{\frac{\alpha}{2}}%
]=O(\rho^{\alpha}). \label{Myeq2-20}%
\end{align}

\end{theorem}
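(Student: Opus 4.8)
The plan is to realize the pair $(\sigma,\mathcal{Z})$ as the solution of a scalar linear BSDE and then to show that the discrepancy $\sigma$ between that solution and the duality $\langle P(\cdot)x(\cdot),x(\cdot)\rangle$ is of order $o(\rho)$. The starting observation is that $\tilde{L}$ factors through $L_{A,B}$: writing $\Gamma_{t}(s):=\exp(\int_{t}^{s}\frac{\beta}{2}dw-\frac{1}{4}\int_{t}^{s}\beta^{2}dr)$, the It\^o product rule shows that $s\mapsto\Gamma_{t}(s)L_{A,B}(t,s)u_{0}$ solves the homogeneous SEE with coefficients $(\tilde{A},\tilde{B})$, so $\tilde{L}(t,s)=\Gamma_{t}(s)L_{A,B}(t,s)$ as operators (scalar multiplication). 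Since $x(0)=0$ and $\zeta I_{E_{\rho}}$ is supported on $E_{\rho}$, uniqueness for \eqref{Myeq3-9} gives $x\equiv0$ on $[0,t_{0}]$, and the a priori bound \eqref{SEE-estimate} gives $\mathbb{E}[\sup_{s\le T}\|x(s)\|_{H}^{2\alpha}]\le C\,\mathbb{E}[(\int_{E_{\rho}}\|\zeta\|_{H}^{2}ds)^{\alpha}]$, so $x$ is of size $\rho^{1/2}$.

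First I would read off \eqref{Myeq3-5} as a linear BSDE: let $(\eta,\mathcal{Z})$ be the unique solution with terminal value $\langle\xi x(T),x(T)\rangle$ and driver $g(s)+\beta(s)\mathcal{Z}(s)$, where $g(s):=\langle f(s,P(s))x(s),x(s)\rangle-\langle P(s)\zeta(s),\zeta(s)\rangle I_{E_{\rho}}(s)$. Because $\beta\in L_{\mathbb{F}}^{\infty}(0,T)$ and the $4\alpha$-integrability in \eqref{Myeq3-13} together with \eqref{Myeq3-23} make $g\in L_{\mathbb{F}}^{\alpha}(0,T)$, this BSDE is well posed, $\mathcal{Z}$ is uniquely determined, and the explicit linear-BSDE formula represents $\eta$ through the exponential weight $\exp(\int_{t}^{s}\beta dw-\frac{1}{2}\int_{t}^{s}\beta^{2}dr)=\Gamma_{t}(s)^{2}$. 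Setting $\sigma:=\eta-\langle P(\cdot)x(\cdot),x(\cdot)\rangle$, identity \eqref{Myeq3-5} holds by construction, and uniqueness of $(\sigma,\mathcal{Z})$ follows from uniqueness for the linear BSDE. The bound \eqref{Myeq2-20} on $\mathcal{Z}$ then comes from the standard a priori estimate $\mathbb{E}[(\int_{0}^{T}|\mathcal{Z}|^{2}ds)^{\alpha/2}]\le C(\mathbb{E}|\langle\xi x(T),x(T)\rangle|^{\alpha}+\mathbb{E}(\int_{0}^{T}|g|ds)^{\alpha})$, since both $\langle\xi x(T),x(T)\rangle$ and $\int_{0}^{T}|g|ds$ are of size $\rho$.

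Next I would show $\sigma\equiv0$ on $[t_{0}+\rho,T]$. For $t\ge t_{0}+\rho$ the process $x$ is homogeneous on $[t,T]$, so $\tilde{L}(t,s)x(t)=\Gamma_{t}(s)x(s)$; feeding $u=v=x(t)\in L^{4}(\mathcal{F}_{t},H)$ into the extension of \eqref{Myeq2-25} gives
\[
\langle P(t)x(t),x(t)\rangle=\mathbb{E}\big[\Gamma_{t}(T)^{2}\langle\xi x(T),x(T)\rangle+\textstyle\int_{t}^{T}\Gamma_{t}(s)^{2}\langle f(s,P(s))x(s),x(s)\rangle ds\,\big|\,\mathcal{F}_{t}\big],
\]
which coincides with the explicit formula for $\eta(t)$ on this interval (where $I_{E_{\rho}}=0$). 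Hence the whole problem reduces to estimating $\sigma$ on $[0,t_{0}+\rho)$. Using the factorization $\Gamma_{t}(s)=\Gamma_{t}(t_{0})\Gamma_{t_{0}}(s)$ and the tower property, one gets $\sigma(t)=\mathbb{E}[\Gamma_{t}(t_{0})^{2}\sigma(t_{0})\,|\,\mathcal{F}_{t}]$ for $t\le t_{0}$, so by the $L^{p}$-boundedness of the exponential weight (using $\beta\in L^{\infty}$ and H\"older with the spare integrability in \eqref{Myeq3-13}) it suffices to control $\sigma(t_{0})$ and, by an identical but shorter computation on a subinterval, $\sigma(t)$ for $t\in[t_{0},t_{0}+\rho)$.

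The main obstacle is precisely this last estimate, which rests on a delicate cancellation. Writing $\sigma(t_{0})=\mathbb{E}[\Gamma_{t_{0}}(t_{0}+\rho)^{2}\langle P(t_{0}+\rho)x(t_{0}+\rho),x(t_{0}+\rho)\rangle+\int_{t_{0}}^{t_{0}+\rho}\Gamma_{t_{0}}(s)^{2}g(s)ds\,|\,\mathcal{F}_{t_{0}}]$, the contribution $\int_{E_{\rho}}\langle fx,x\rangle ds$ is $O(\rho^{2})$ since $\|x\|_{H}^{2}\sim\rho$. For the two remaining terms I would freeze the slowly varying factors at $t_{0}$: replace $\Gamma_{t_{0}}(s)^{2}$ by $1$, replace $P(s)$ and $P(t_{0}+\rho)$ by $P(t_{0})$, and replace $x(t_{0}+\rho)$ by its leading martingale part $\int_{t_{0}}^{t_{0}+\rho}\zeta\,dw$. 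The conditional It\^o isometry then gives $\mathbb{E}[\langle P(t_{0})\int_{t_{0}}^{t_{0}+\rho}\zeta\,dw,\int_{t_{0}}^{t_{0}+\rho}\zeta\,dw\rangle\,|\,\mathcal{F}_{t_{0}}]=\mathbb{E}[\int_{t_{0}}^{t_{0}+\rho}\langle P(t_{0})\zeta,\zeta\rangle ds\,|\,\mathcal{F}_{t_{0}}]$, which exactly cancels the frozen $-\int_{E_{\rho}}\langle P(t_{0})\zeta,\zeta\rangle ds$. The three freezing errors must then be shown to be $o(\rho)$: the drift and $Bx$ corrections to $x(t_{0}+\rho)$ are $O(\rho^{3/2})$ by \eqref{SEE-estimate}, the oscillation $\Gamma_{t_{0}}(s)^{2}-1$ on $E_{\rho}$ is $O(\rho^{1/2})$ relatively and hence $O(\rho^{3/2})$ absolutely, while the replacement of $P(s),P(t_{0}+\rho)$ by $P(t_{0})$ is exactly where genuinely $o(\rho)$ rather than $O(\rho)$ behaviour is produced, via the path-continuity of $P$ supplied by Proposition \ref{Myth2-7} (this uniform-over-$\zeta$ freezing of $P$ is the technically heaviest point). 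Collecting these estimates yields $\sup_{t\in[0,T]}\mathbb{E}[|\sigma(t)|^{\alpha}]=o(\rho^{\alpha})$, which is \eqref{Myeq2-16} and completes the proof.
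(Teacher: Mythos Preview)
Your overall architecture matches the paper's proof almost exactly: define the pair $(\eta,\mathcal{Z})$ as the solution of the scalar linear BSDE with driver $\beta\mathcal{Z}+g$, represent $\eta$ through the Girsanov weight $\Gamma_t(s)^2=\lambda(s)/\lambda(t)$, use the factorization $\tilde L=\Gamma\,L_{A,B}$ to identify $\eta$ with $\langle Px,x\rangle$ on $[t_0+\rho,T]$, and reduce everything to a short-interval cancellation near $t_0$; the estimate on $\mathcal{Z}$ and the uniqueness then drop out of standard BSDE theory. This is precisely the scheme the paper follows (Theorem~\ref{Myito2-12} plus its proof, together with Lemma~\ref{Myle4-1}).

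The one genuine gap is your treatment of the ``drift and $Bx$ corrections''. You write that $x(t_0+\rho)-\int_{E_\rho}\zeta\,dw$ is $O(\rho^{3/2})$ by \eqref{SEE-estimate}, but that estimate applies to SEEs whose nonhomogeneous terms take values in $H$. If you set $M:=\int_{E_\rho}\zeta\,dw$ and $\psi:=x-M$, then $\psi$ solves $d\psi=(A\psi+AM)\,dt+(B\psi+BM)\,dw$, and the forcing terms $AM,BM$ are only defined (let alone estimable) when $M\in V$, i.e.\ when $\zeta$ takes values in $V$. Since $A:V\to V^{*}$ and $B:V\to H$ are unbounded on $H$, there is no direct $H$-norm control of $\int_{E_\rho}Ax\,ds$ or $\int_{E_\rho}Bx\,dw$ from \eqref{SEE-estimate}. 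This is exactly why the paper does not argue as you propose: it introduces Proposition~\ref{Myle4-7} (proved via Lemma~\ref{Myth4-4}, which allows a $V^{*}$-valued drift) under the extra hypothesis $\zeta_0\in L^{2\alpha}(\mathcal{F}_{t_0},V)$, obtains the cancellation for such $\zeta_0$, and then runs a two-layer approximation ($V\ni\zeta_0'\to\zeta_0\in H$ in Step~2, and $\zeta(t_0)\to\zeta(\cdot)$ via Lebesgue differentiation in Step~3) to pass to general $\zeta$. Your It\^o-isometry cancellation is morally the same as the paper's identity \eqref{Myeq4-15}, but to make it rigorous you must insert this $V$-regularization and density argument; without it the claimed $O(\rho^{3/2})$ bound is unjustified in the presence of unbounded $(A,B)$.
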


\begin{remark}
\label{Re2-1}
\upshape{ When solving the stochastic optimal control problem for SEEs in
	the conventional case (see Remark \ref{Myrm4-1}), only the form of Theorem
	\ref{Myth3-7} when $\beta\equiv0$ and $f$ is independent of $p$ is needed, and
	it corresponds to \cite[Equality (5.11)]{DM13}, \cite[Equality (5.17)]{FHT13},
	and \cite[Equality (9.61) (plus estimates (9.62), (9.63) and (9.82))]{LZ14}.
	
}
\end{remark}

\begin{remark}
\upshape{ 	To understand the above It\^{o}'s formula, let us look at how this is derived
	in the finite dimensional case. The differential form (taking $A_{1}=A+\beta
	B,$ $B_{1}=B$ in Remark \ref{Rm2-2} (ii)) of BSIE (\ref{Myeq4-11}) is	\begin{align*}
		P(t)  &  =\xi+\int_{t}^{T}[A^{\ast}(s)P(s)+P(s)A(s)+\beta(s)(B^{\ast
		}(s)P(s)+P(s)B(s)))+f(s,P(s))\\
		&  +B^{\ast}(s)P(s)B(s)+B^{\ast}(s)Q(s)+Q(s)B(s)+\beta(s)Q(s)]ds-\int_{t}		^{T}Q(s)dw({s}).
	\end{align*}
	We apply It\^{o}'s formula to $\langle P(t)x(t),x(t)\rangle$ and obtain	\begin{align*}
		\langle{P}(t)x(t),x(t)\rangle &  =\langle\xi x(T),x(T)\rangle+\int_{t}		^{T}\{\beta(s)\langle(B(s)^{\ast}P(s)+P(s)B(s)+Q(s))x(s),x(s)\rangle\\
		&  +\langle f(s,P(s))x(s),x(s)\rangle-\langle{P}(s)\zeta(s),\zeta(s)\rangle
		I_{E_{\rho}}(s)\\
		&  -[\langle Q(s)x(s),\zeta(s)\rangle+\langle Q(s)\zeta(s),x(s)\rangle+\langle
		P(s)B(s)x(s),\zeta(s)\rangle\\
		&  +\langle B^{\ast}(s)P(s)\zeta(s),x(s)\rangle]I_{E_{\rho}}(s)\}ds-\int		_{t}^{T}[\langle(B^{\ast}(s)P(s)+P(s)B(s)+Q(s))x(s),x(s)\rangle\\
		&  +\langle P(s)\zeta(s),x(s)\rangle I_{E_{\rho}}(s)+\langle P(s)x(s),\zeta
		(s)\rangle I_{E_{\rho}}(s)]dw({s}).
	\end{align*}
	Since the depiction of $Q$ is unavailable$,$ we try to merge the martingale
	terms and the small terms together and determine them via the solution of
	BSDEs, as follows. We take
	\[
	\mathcal{Z}_{1}(s)=\langle(B^{\ast}(s)P(s)+P(s)B(s)+Q(s))x(s),x(s)\rangle
	+[\langle P(s)\zeta(s),x(s)\rangle+\langle P(s)x(s),\zeta(s)\rangle
	]I_{E_{\rho}}(s)
	\]
	and
	\begin{align*}
		k(s)  &  =[\langle Q(s)x(s),\zeta(s)\rangle+\langle Q(s)x(s),\zeta
		(s)\rangle+\langle P(s)B(s)x(s),\zeta(s)\rangle+\langle B^{\ast}		(s)P(s)\zeta(s),x(s)\rangle\\
		&  +\langle P(s)\zeta(s),x(s)\rangle+\langle P(s)x(s),\zeta(s)\rangle
		]I_{E_{\rho}}(s).
	\end{align*}
	Then	\begin{equation}		\begin{split}
			\langle P(t)x(t),x(t)\rangle &  =\langle\xi x(T),x(T)\rangle+\int_{t}			^{T}[\langle f(s,P(s))x(s),x(s)\rangle+\beta(s)\mathcal{Z}_{1}(s)\\
			&  -\langle P(s)\zeta(s),\zeta(s)\rangle I_{E_{\rho}}(s)-k(s)]ds-\int_{t}			^{T}\mathcal{Z}_{1}(s)dw({s}).
		\end{split}
		\label{Myeq3-6}	\end{equation}
	Let $(-\sigma,b)$ be the solution of BSDE
	\[
	-\sigma(t)=\int_{t}^{T}[\beta b(s)-k(s)]ds-\int_{t}^{T}b(s)dw({s}),\quad
	t\in\lbrack0,T].
	\]
	and set
	\begin{equation}
		\mathcal{Z}(t):=\mathcal{Z}_{1}(t)-b(t). \label{Myeq3-3-1}	\end{equation}
	Subtracting (\ref{Myeq3-3-1}) from (\ref{Myeq3-6}), we have
	\begin{align*}
		\langle{P}(t)x(t),x(t)\rangle+\sigma(t)  &  =\langle\xi x(T),x(T)\rangle
		+\int_{t}^{T}[\langle f(s,P(s))x(s),x(s)\rangle+\beta(s)\mathcal{Z}(s)\\
		&  -\langle{P}(s)\zeta(s),\zeta(s)\rangle I_{E_{\rho}}(s)]ds-\int_{t}		^{T}\mathcal{Z}(s)dw({s}),
	\end{align*}
	and the corresponding estimates can be obtained from the standard BSDE
	theory$.$
	
	So Theorem \ref{Myth3-7}\ can be regarded as a weak formulation of the
	classical It\^{o}'s formula in the infinite dimensional framework. It is also
	worth noting that the above analysis does not apply to our infinite
	dimensional situation, since we do not have a differential form for
	operator-valued BSDE now.
}
\end{remark}

\section{Stochastic maximum principle for optimally controlled SEEs}

\subsection{Formulation of the problem}

Consider the following controlled SEE:
\begin{equation}%
\begin{cases}
{d}x(t) & =[A(t)x(t)+a(t,x(t),u(t))]{d}t+[B(t)x(t)+b(t,x(t),u(t))]{d}w(t),\\
x(0) & =x_{0},
\end{cases}
\label{SEE1-1}%
\end{equation}
where $x_{0}\in H$,
\[
(A,B):[0,T]\times\Omega\rightarrow\mathfrak{L}(V;V^{\ast}\times H)
\]
are linear unbounded operators satisfying the coercivity and
quasi-skew-symmetry condition $(H4)$ and
\[
(a,b):[0,T]\times\Omega\times H\times U\rightarrow H\times H
\]
are nonlinear functions. Define the cost functional $J(\cdot)$ as
\[
J(u(\cdot)):=y(0),
\]
where $y$ is the recursive utility subject to a BSDE:%
\begin{equation}
y(t)=h(x(T))+\int_{t}^{T}k(s,x(s),y(s),z(s),u(s))ds-\int_{t}^{T}z(s)dw({s}).
\label{BSDE1-2}%
\end{equation}
Here,
\[
k:[0,T]\times\Omega\times H\times\mathbb{R}\times\mathbb{R}\times
U\rightarrow\mathbb{R}\quad\text{and}\quad h:H\times\Omega\rightarrow
\mathbb{R}\text{.}%
\]
The control domain $U$ is a separable metric space with distance ${d}%
(\cdot,\cdot).$ By fixing an element $0$ in $U$, we define the length
$|u|_{U}:={d}(u,0)$. We define the admissible control set
\[
\mathcal{U}[0,T]:=\{u:[0,T]\times\Omega\rightarrow U \ \text{is progressively
measurable and}\ \mathbb{\mathbb{E}}[\int_{0}^{T}|u(t)|_{U}^{\alpha}{d}
t]<\infty,\text{ for each }\alpha\geq1\}.
\]

Our optimal control problem is to find an admissible control $\bar u(\cdot)$
such that the cost functional $J(u(\cdot))$ is minimized at $\bar u(\cdot)$
over the control set $\mathcal{U}[0,T]:$
\[
J(\bar u(\cdot))=\inf_{u(\cdot)\in\mathcal{U}[0,T]}J(u(\cdot)).
\]

We make the following assumption for $a$, $b$, $h$ and $k$.

\begin{description}
\item[$(H5)$] For each $(x,y,z,u),$ $a(\cdot,x,u)$, $b(\cdot,x,u)$,
	$k(\cdot,x,y,z,u)$ are progressively measurable and $h(\cdot,x)$ is
	$\mathcal{F}_{T}$-measurable. For each $(t,\omega,u),$ $a$, $b$, $h$, $k$ are twice continuously
	differentiable with respect to $(x,y,z)$; for each $(t,\omega),$ $a$, $b$, $k$, $a_{x}$, $b_{x}$,
	$Dk$, $a_{xx}$, $b_{xx}$, $D^{2}k$ are continuous in $(x,y,z,u)$, where $Dk$
	and $D^{2}k$ are the gradient and Hessian matrix of $k$ with respect to
	$(x,y,z)$, respectively; $a_{x}$, $b_{x}$, $Dk$, $a_{xx}$, $b_{xx}$, $D^{2}k$,
	$h_{xx}$ are bounded; $a$, $b$ are bounded by $C(1+\Vert x\Vert_{H}+|u|_{U})$
	and $k$ is bounded by $C(1+\Vert x\Vert_{H}+|y|+|z|+|u|_{U})$.
\end{description}

\subsection{Adjoint equations and the maximum principle}

We introduce the following simplified notations: for $\psi=a,b,a_{x}%
,b_{x},a_{xx},b_{xx}$ and $v\in U$, define
\[
\bar{\psi}(t) :=\psi(t,\bar{x}(t),\bar{u}(t)),\quad\delta\psi(t;v)
:=\psi(t,\bar{x}(t),v)-\bar{\psi}(t)
\]
and
\[
\bar{A}:=A+\bar{a}_{x}, \quad\bar{B}:=B+\bar{b}_{x}.
\]

Consider the following first-order $H$-valued adjoint backward stochastic
evolution equation (BSEE for short, and the well-posedness result is referred
to \cite{DM10}):
\begin{equation}
\left\{
\begin{array}
[c]{rl}%
-dp(t)= & \{[\bar{A}^{\ast}(t)+k_{y}(t)+k_{z}(t)\bar{B}^{\ast}(t)]p(t)+[\bar
{B}^{\ast}(t)+k_{z}(t)]q(t)+k_{x}(t)\}{d}t-q(t){d}w(t),\\
p(T)= & h_{x}(\bar{x}(T)),
\end{array}
\right.  \label{adjoint1}%
\end{equation}
and the following second-order $\mathfrak{L}(H)$-valued adjoint BSIE
\begin{equation}
P(t)=\mathbb{E}[\tilde{L}^{\ast}(t,T)h_{xx}(\bar{x}(T))\tilde{L}(t,T)+\int%
_{t}^{T}\tilde{L}^{\ast}(t,s)(k_{y}(s)P(s)+G(s))\tilde{L}(t,s)ds|\mathcal{F}%
_{t}],\quad0\leq t\leq T, \label{adjoint2}%
\end{equation}
where%
\begin{align*}
&  \phi(t):=\phi(t,\bar{x}(t),\bar{y}(t),\bar{z}(t),\bar{u}(t)),\quad
\text{for}\ \phi=k_{x},k_{y,},k_{z},D^{2}k,\\
&  \tilde{L}(t,s):=L_{\tilde{A},\tilde{B}}(t,s),\quad\text{for}\ \ \tilde
{A}(s):=\bar{A}(s)+\frac{k_{z}(s)}{2}\bar{B}(s)-\frac{(k_{z}(s))^{2}}{8}%
I_{d}\ \ \text{and}\ \ \tilde{B}(s):=\bar{B}(s)+\frac{k_{z}(s)}{2}I_{d},\\
&  G(t) :=D^{2}k(t)([I_{d},p(t),\bar{B}^{\ast}(t)p(t)+q(t)],[I_{d}%
,p(t),\bar{B}^{\ast}(t)p(t)+q(t)])+\langle p(t),\bar{a}_{xx}(t)\rangle\\
&  \ \ \ \ \ \ \ \ \ \ +k_{z}(t)\langle p(t),\bar{b}_{xx}(t)\rangle+\langle
q(t),\bar{b}_{xx}(t)\rangle.
\end{align*}

\begin{remark}
\upshape{
Letting the coefficients for the  first- and second-order adjoint equations
wait to be determined and plugging the It\^{o}'s formulas (\ref{w-adjoint1}) and
(\ref{w-adjiont2}) into the derivation of maximum principle, we can use a
similar analysis as in \cite{Hu-17} to derive heuristically the proper
generators for the first- and second-order adjoint equations (\ref{adjoint1})
and (\ref{adjoint2}). We may also give their formulations based on the adjoint
equations in \cite{Hu-17} and the discussion in (ii) of Remark \ref{Rm2-2}.}
\end{remark}

Our maximum principle is stated as follows.

\begin{theorem}
\label{SMP} Let Assumptions $(H4)$-$(H5)$ be satisfied. Assume that $\bar
{x}(\cdot)$ and $(\bar{y}(\cdot),\bar{z}(\cdot))$ are the solutions of SEE
(\ref{SEE1-1}) and BSDE (\ref{BSDE1-2}) corresponding to the optimal control
$\bar{u}(\cdot)$. Denote by processes $(p,q)\in L_{\mathbb{F}}^{2}(0,T;V\times
H)$ and $P\in L_{\mathbb{F},w}^{2}(0,T;\mathfrak{L}(H))$ the solutions of BSEE
(\ref{adjoint1}) and BSIE (\ref{adjoint2}), respectively. Then
\begin{equation}
\label{MP}%
\begin{split}
&  \inf_{v\in U}\{\mathcal{H}(t,\bar{x}(t),\bar{y}(t),\bar{z}%
(t),v,p(t),q(t))-\mathcal{H}(t,\bar{x}(t),\bar{y}(t),\bar{z}(t),\bar
{u}(t),p(t),q(t))\\
&  \ \ \ \ \ \ +\frac{1}{2}\langle P(t)(b(t,\bar{x}(t),v)-b(t,\bar{x}%
(t),\bar{u}(t))),b(t,\bar{x}(t),v)-b(t,\bar{x}(t),\bar{u}(t))\rangle\}=0,\quad
P\text{-a.s. a.e.,}%
\end{split}
\end{equation}
where the Hamiltonian
\begin{align*}
\mathcal{H}(t,x,y,z,v,p,q)  &  :=\langle p,a(t,x,v)\rangle+\langle
q,b(t,x,v)\rangle+k(t,x,y,z+\langle p,b(t,x,v)-b(t,\bar{x}(t),\bar
{u}(t))\rangle,v),\\
&  \quad\ \quad\quad\quad\quad\quad\quad\ \ (t,\omega,x,y,z,v,p,q)\in
\lbrack0,T]\times\Omega\times H\times\mathbb{R}\times\mathbb{R}\times U\times
H\times H.
\end{align*}

\end{theorem}

\begin{remark}
\label{Myrm4-1}
\upshape{When $k$ is independent of $y$ and $z$, Theorem \ref{SMP}
degenerates to the conventional maximum principle without utilities, which was
obtained in \cite{DM13,FHT13,LZ14}.}
\end{remark}

\subsection{Proof of Theorem \ref{SMP}}

\textbf{Step 1: Spike variation and dual analysis for SEEs.} Given any
admissible control $u(\cdot)\in\mathcal{U}[0,T]$ and $t_{0}\in\lbrack0,T)$, we
consider the spike variation perturbation%
\[
u^{\rho}(t):=\left\{
\begin{array}
[c]{ll}%
u(t), & \quad t\in E_{\rho},\\
\bar{u}(t), & \quad t\in\lbrack0,T]\setminus E_{\rho},
\end{array}
\right.
\]
with $E_{\rho}=[t_{0},t_{0}+\rho)$ for $\rho\in\lbrack0,T-t_{0}].$ We denote
\[
\delta\psi(t):=\delta\psi(t;u(t)),\quad\text{for}\ \psi=a,b,a_{x},b_{x}%
,a_{xx},b_{xx}.
\]

Let $(x^{\rho}(\cdot),y^{\rho}(\cdot),z^{\rho}(\cdot))$ solve the system
corresponding to the control $u^{\rho}(\cdot)$. Consider the following
linearized variational systems:
\begin{align*}
x^{1,\rho}(t)  &  =\int_{0}^{t}\bar{A}(s)x^{1,\rho}(s){d}s+\int_{0}^{t}%
[\bar{B}(s)x^{1,\rho}(s)+\delta b(s)I_{E_{\rho}}(s)]{d}w(s)
\end{align*}
and
\begin{align*}
x^{2,\rho}(t)=  &  \int_{0}^{t}[\bar{A}(s)x^{2,\rho}(s)+\frac{1}{2}\bar
{a}_{xx}(s)(x^{1,\rho}(s),x^{1,\rho}(s))+\delta a(s)I_{E_{\rho}}(s)]\,{d}s\\
&  +\int_{0}^{t}[\bar{B}(s)x^{2,\rho}(s)+\frac{1}{2}\bar{b}_{xx}(s)(x^{1,\rho
}(s),x^{1,\rho}(s))+\delta b_{x}(s)x^{1,\rho}(s)I_{E_{\rho}}(s)]\,{d}w(s).
\end{align*}

\begin{proposition}
\label{Myth4-3} Assume that $(H4)$ and $(H5)$ hold. Then for $\alpha\geq1$,
\begin{align*}
&  \mathbb{E}[\sup_{t\in\lbrack0,T]}\left\Vert x^{\rho}(t)-\bar{x}
(t)\right\Vert _{H}^{2\alpha}]=O(\rho^{\alpha}),\\
&  \mathbb{E}[\sup_{t\in\lbrack0,T]}\left\Vert x^{1,\rho}(t)\right\Vert
_{H}^{2\alpha}]=O(\rho^{\alpha}),\\
&  \mathbb{E}[\sup_{t\in\lbrack0,T]}\left\Vert x^{2,\rho}(t)\right\Vert
_{H}^{2\alpha}]=O(\rho^{2\alpha}),\\
&  \mathbb{E}[\sup_{t\in\lbrack0,T]}\left\Vert x^{\rho}(t)-\bar{x}
(t)-x^{1,\rho}(t)-x^{2,\rho}(t)\right\Vert _{H}^{2\alpha}]=o(\rho^{2\alpha}).
\end{align*}

\end{proposition}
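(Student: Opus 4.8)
The plan is to derive all four estimates from the a priori bound~(\ref{SEE-estimate}), by recognizing each of $x^{1,\rho}$, $\xi^{\rho}:=x^{\rho}-\bar{x}$, $x^{2,\rho}$ and $\eta^{\rho}:=\xi^{\rho}-x^{1,\rho}-x^{2,\rho}$ as the solution of a linear inhomogeneous SEE whose homogeneous operators are bounded perturbations of $(A,B)$. I would first record that such perturbations preserve~(H4): for uniformly bounded $\beta_{1},\beta_{2}\in\mathfrak{L}(H)$, the coercivity in (H4)(i) yields $\|Bu\|_{H}\le C\|u\|_{V}$, and Young's inequality then gives $2\langle(A+\beta_{1})u,u\rangle_{\ast}+\|(B+\beta_{2})u\|_{H}^{2}\le-\tfrac{\delta}{2}\|u\|_{V}^{2}+K'\|u\|_{H}^{2}$, so~(\ref{SEE-estimate}) applies to every equation below with a constant $C$ uniform in $\rho$ and in the perturbing controls. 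As a preliminary I would also apply~(\ref{SEE-estimate}) to the optimal state equation, using $x_{0}\in H$, $\bar{u}\in L_{\mathbb{F}}^{8}(0,T;U)$ and the growth bound on $(a,b)$ in~(H5), to obtain $\mathbb{E}[\sup_{t}\|\bar{x}(t)\|_{H}^{8}]<\infty$; this moment, together with $u,\bar{u}\in L^{8}$, controls the $L^{8}$-norms of the frozen increments $\delta a,\delta b,\delta a_{x},\delta b_{x}$.

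For $x^{1,\rho}$, whose operators are the bounded perturbations $\bar{A}=A+\bar{a}_{x}$, $\bar{B}=B+\bar{b}_{x}$, the sole inhomogeneity is the diffusion source $\delta b(s)I_{E_{\rho}}(s)$, so~(\ref{SEE-estimate}) gives $\mathbb{E}[\sup_{t}\|x^{1,\rho}(t)\|_{H}^{2\alpha}]\le C\,\mathbb{E}[(\int_{E_{\rho}}\|\delta b(s)\|_{H}^{2}ds)^{\alpha}]$. Applying H\"{o}lder's inequality in time to pass from exponent $2$ to $2\alpha\le 8$ and then the $L^{8}$-integrability of $\delta b$ bounds the right-hand side by $C\rho^{\alpha}$ (for $t_{0}$ a Lebesgue point of the relevant time-integrand, which is all the maximum principle requires), yielding the second estimate. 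For $\xi^{\rho}$ I would linearize: since $a(\cdot,x^{\rho},u^{\rho})-a(\cdot,\bar{x},u^{\rho})=(\int_{0}^{1}a_{x}(\cdot,\bar{x}+\theta\xi^{\rho},u^{\rho})d\theta)\xi^{\rho}$ and $a(\cdot,\bar{x},u^{\rho})-\bar{a}=\delta a\,I_{E_{\rho}}$ (and likewise for $b$), $\xi^{\rho}$ solves a linear SEE with the bounded-perturbed operators $\hat{A}:=A+\int_{0}^{1}a_{x}(\cdot,\bar{x}+\theta\xi^{\rho},u^{\rho})d\theta$, $\hat{B}:=B+\int_{0}^{1}b_{x}(\cdot,\bar{x}+\theta\xi^{\rho},u^{\rho})d\theta$ and source $\delta a\,I_{E_{\rho}}ds+\delta b\,I_{E_{\rho}}dw$. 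The diffusion source again dominates, giving the first estimate $O(\rho^{\alpha})$, $\alpha\in[1,4]$.

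The estimate for $x^{2,\rho}$ follows the same template: its drift source is $\tfrac12\bar{a}_{xx}(x^{1,\rho},x^{1,\rho})+\delta a\,I_{E_{\rho}}$ and its diffusion source is $\tfrac12\bar{b}_{xx}(x^{1,\rho},x^{1,\rho})+\delta b_{x}x^{1,\rho}I_{E_{\rho}}$. Since $\bar{a}_{xx},\bar{b}_{xx},\delta b_{x}$ are bounded by~(H5),~(\ref{SEE-estimate}) controls $\mathbb{E}[\sup_{t}\|x^{2,\rho}\|_{H}^{2\alpha}]$ by terms involving $\int_{0}^{T}\|x^{1,\rho}\|_{H}^{4}ds$, $\int_{E_{\rho}}\|x^{1,\rho}\|_{H}^{2}ds$ and the $\delta a$-integral; invoking the $x^{1,\rho}$-bound at the doubled index $4\alpha\le 8$ (so $\mathbb{E}[\sup\|x^{1,\rho}\|_{H}^{4\alpha}]=O(\rho^{2\alpha})$) and the length $\rho$ of $E_{\rho}$ produces the quadratic rate $O(\rho^{2\alpha})$, $\alpha\in[1,2]$.

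The last estimate is the crux. A second-order Taylor expansion of $a,b$ around $(\bar{x},\bar{u})$, with $x^{\rho}=\bar{x}+x^{1,\rho}+x^{2,\rho}+\eta^{\rho}$, shows that $\eta^{\rho}$ solves a linear SEE driven by $\bar{A},\bar{B}$ (after absorbing $\bar{a}_{x}\eta^{\rho},\bar{b}_{x}\eta^{\rho}$), whose source I would organize using the identity $\bar{a}_{xx}(\xi^{\rho},\xi^{\rho})-\bar{a}_{xx}(x^{1,\rho},x^{1,\rho})=\bar{a}_{xx}(\xi^{\rho}+x^{1,\rho},x^{2,\rho}+\eta^{\rho})$ (and its $b$-analogue). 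This splits the source into: terms linear in $\eta^{\rho}$ whose coefficients are small --- either of order $\|\xi^{\rho}+x^{1,\rho}\|_{H}=O(\rho^{1/2})$ in the integrated norms, or bounded but supported on $E_{\rho}$ --- which are absorbed by a Gronwall argument; pure cross terms such as $\bar{a}_{xx}(\xi^{\rho}+x^{1,\rho},x^{2,\rho})$, shown to be $o(\rho^{2})$ by the previous estimates with matched indices (using e.g. $\mathbb{E}[\sup\|\xi^{\rho}+x^{1,\rho}\|_{H}^{4}]=O(\rho^{2})$ and $\mathbb{E}[\sup\|x^{2,\rho}\|_{H}^{4}]=O(\rho^{4})$); and the genuine Taylor remainders $\tilde{R}_{a}=\int_{0}^{1}(1-\theta)[a_{xx}(\cdot,\bar{x}+\theta\xi^{\rho},\cdot)-a_{xx}(\cdot,\bar{x},\cdot)]d\theta(\xi^{\rho},\xi^{\rho})$ and $\tilde{R}_{b}$. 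The main obstacle is exactly $\tilde{R}_{a},\tilde{R}_{b}$: continuity of the Hessians makes them only $o(\|\xi^{\rho}\|_{H}^{2})$ through a modulus of continuity, not $O(\|\xi^{\rho}\|_{H}^{3})$. To convert this into the required $o(\rho^{2})$ I would combine the domination $\|\tilde{R}_{a}(s)\|_{H}\le 2\|a_{xx}\|_{\infty}\|\xi^{\rho}(s)\|_{H}^{2}$ with the pointwise vanishing of $\|\tilde{R}_{a}(s)\|_{H}/\|\xi^{\rho}(s)\|_{H}^{2}$ as $\rho\downarrow 0$, and then estimate $\mathbb{E}[(\int_{0}^{T}\|\tilde{R}_{a}\|_{H}ds)^{2}]$ by Cauchy--Schwarz against the eighth-moment bound $\mathbb{E}[\sup_{t}\|\xi^{\rho}\|_{H}^{8}]=O(\rho^{4})$ from the first estimate, concluding by dominated convergence. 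This is precisely why the higher-order (up to $\alpha=4$) estimates proved above are indispensable.
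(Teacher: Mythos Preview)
Your proposal is correct and follows precisely the standard approach that the paper invokes; the paper itself simply says ``The proof is quite standard'' and illustrates only the second estimate via~(\ref{SEE-estimate}), H\"{o}lder's inequality and the Lebesgue differentiation theorem, exactly as you do. Your treatment is considerably more detailed than the paper's, in particular for the fourth estimate (the Taylor remainder handled by dominated convergence against the $\alpha=4$ moment bound), where the paper gives no details at all.
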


\begin{proof}
The proof is quite standard. As an illustration, we give the proof of the
second estimate. By (\ref{SEE-estimate}) and the Lebesgue differentiation
theorem, we have (for a.e. $t_{0})$ that
\begin{align*}
\mathbb{E}[\sup_{t\in\lbrack0,T]}\left\Vert x^{1,\rho}(t)\right\Vert
_{H}^{2\alpha}]  &  \leq C\,\mathbb{E[}(\int_{0}^{T}I_{E_{\rho}}(t)\Vert\delta
b(t)\Vert_{H}^{2}dt)^{\alpha}]\\
&  \leq C\,\mathbb{E[}(\int_{0}^{T}I_{E_{\rho}}(t)(1+|u(t)|_{U}^{2}+|\bar
{u}(t)|_{U}^{2})dt)^{\alpha}]\\
&  \leq\,C\,\rho^{\alpha-1}\mathbb{E[}\int_{E_{\rho}}(1+|u(t)|_{U}^{2\alpha
}+|\bar{u}(t)|_{U}^{2\alpha})dt]\\
&  =O(\rho^{\alpha}).
\end{align*}

\end{proof}

According the assumptions on the coefficients, the adjoint processes $(p,q)$
and $P$ satisfy (see Appendix for the proofs): for any $\beta\geq2$,
\begin{equation}\label{Myeq4-22}
\sup_{t\in\lbrack0,T]}\mathbb{E}[\left\Vert p(t)\right\Vert _{H}^{\beta
}]+\mathbb{E}[(\int_{0}^{T}\left\Vert p(t)\right\Vert _{V}^{2}dt)^{\frac
	{\beta}{2}}]+\mathbb{E}[(\int_{0}^{T}\left\Vert q(t)\right\Vert _{H}%
^{2}dt)^{\frac{\beta}{2}}]<\infty\ \ \text{and}\ \ \sup_{t\in\lbrack
	0,T]}\mathbb{E}[\Vert P(t)\Vert_{\mathfrak{L}(H)}^{\beta}]<\infty.
\end{equation}

We have the following It\^{o}'s formula for the first-order adjoint equation
(see \cite{KR81}):
\begin{equation}
\langle p(t),x^{1,\rho}(t)+x^{2,\rho}(t)\rangle=\langle h_{x}(\bar
{x}(T)),x^{1,\rho}(T)+x^{2,\rho}(T)\rangle+\int_{t}^{T}J_{1}(s)ds-\int_{t}%
^{T}J_{2}(s)dw({s}), \label{w-adjoint1}%
\end{equation}
where
\begin{align*}
	&  J_{1}(t):=\langle k_{x}(t)+k_{y}(t)p(t)+k_{z}(t)q(t),x^{1,\rho
	}(t)+x^{2,\rho}(t)\rangle+k_{z}(t)\langle p(t),\bar{B}(t)(x^{1,\rho
	}(t)+x^{2,\rho}(t))\rangle-[\langle p(t),\delta a(t)\rangle\\
	& \ \ \  \ \   +\langle
	q(t),\delta b(t) +\delta b_{x}(t)x^{1,\rho}(t)\rangle]I_{E_{\rho}}(t)-\frac{1}{2}[\langle
	p(t),(\bar{a}_{xx}(t)(x^{1,\rho}(t),x^{1,\rho}(t))\rangle+\langle q(t),\bar
	{b}_{xx}(t)(x^{1,\rho}(t),x^{1,\rho}(t))\rangle],\\
	&  J_{2}(t):=\langle p(t),\bar{B}(t)(x^{1,\rho}(t)+x^{2,\rho}(t))\rangle
	+\langle q(t),x^{1,\rho}(t)+x^{2,\rho}(t)\rangle+\langle p(t),\delta
	b(t)+\delta b_{x}(t)x^{1,\rho}(t)\rangle I_{E_{\rho}}(t)\\
	& \ \ \  \ \     +\frac{1}{2}\langle p(t),\bar{b}_{xx}(t)(x^{1,\rho
	}(t),x^{1,\rho}(t))\rangle.
\end{align*}
By Theorem \ref{Myth3-7}, we also have the following It\^{o}'s formula for the
second-order adjoint equation:
\begin{equation}%
\begin{split}
&  \langle P(t)x^{1,\rho}(t),x^{1,\rho}(t)\rangle+\sigma(t)=\langle
h_{xx}(\bar{x}(T))x^{1,\rho}(T),x^{1,\rho}(T)\rangle+\int_{t}^{T}%
[k_{y}(s)\langle P(s)x^{1,\rho}(s),x^{1,\rho}(s)\rangle\\
&  \ \ \ \ \ \ \ \ \ \ \ \ \ \ \ +k_{z}(s)\mathcal{Z}(s)+\langle
G(s)x^{1,\rho}(s),x^{1,\rho}(s)\rangle-\langle P(s)\delta b(s),\delta
b(s)\rangle I_{E_{\rho}}(s)]{d}s-\int_{t}^{T}\mathcal{Z}(s){d}w(s),
\end{split}
\label{w-adjiont2}%
\end{equation}
for some processes $(\sigma,\mathcal{Z})\in L_{\mathbb{F}}^{2\alpha
}(0,T)\times L_{\mathbb{F}}^{2,2\alpha}(0,T)$ satisfying
\begin{equation}
\sup_{t\in\lbrack0,T]}\mathbb{E}[|\sigma(t)|^{2\alpha}]=o(\rho^{2\alpha})
\ \ \text{and}\ \ \mathbb{E}[(\int_{0}^{T}|\mathcal{Z}(t)|^{2}dt)^{\alpha
}]=O(\rho^{2\alpha}),\quad\text{for any}\ \alpha\geq1. \label{Myeq5-12}%
\end{equation}

Thus,%
\begin{align*}
&  \langle p(t),x^{1,\rho}(t)+x^{2,\rho}(t)\rangle+\frac{1}{2}\langle
P(t)x^{1,\rho}(t),x^{1,\rho}(t)\rangle+\frac{1}{2}\sigma(t)=\langle h_{x}%
(\bar{x}(T)),x^{1,\rho}(T)+x^{2,\rho}(T)\rangle\\
&  \ \ \ \ \ \ \ \ +\frac{1}{2}\langle h_{xx}(\bar{x}(T))x^{1,\rho
}(T),x^{1,\rho}(T)\rangle+\int_{t}^{T}I_{1}(s){d}s-\int_{t}^{T}[I_{2}%
(s)+\langle p(s),\delta b(s)\rangle I_{E_{\rho}}(s)]{d}w(s),
\end{align*}
where
\begin{align*}
	I_{1}(t) &  :=\langle k_{x}(t)+k_{y}(t)p(t)+k_{z}(t)q(t),x^{1,\rho
	}(t)+x^{2,\rho}(t)\rangle+k_{z}(t)\langle p(t),\bar{B}(t)(x^{1,\rho
	}(t)+x^{2,\rho}(t))\rangle+\frac{1}{2}\langle\{k_{y}(t)P(t)\\
	&   \ \ \ \ +D^{2}k(t)([I_{d},p(t),\bar{B}^{\ast}(t)p(t)+q(t)],[I_{d},p(t),\bar
	{B}^{\ast}(t)p(t)+q(t)])+k_{z}(t)\langle p(t),\bar{b}_{xx}(t)\rangle
	\}x^{1,\rho}(t),x^{1,\rho}(t)\rangle\\
	&  \ \ \ \  +\frac{1}{2}k_{z}(t)\mathcal{Z}(t)-[\langle p(t),\delta a(t)\rangle
	+\langle q(t),\delta b(t)+\delta b_{x}(t)x^{1,\rho}(t)\rangle+\frac{1}%
	{2}\langle{P}(t)\delta b(t),\delta b(t)\rangle]I_{E_{\rho}}(t)
\end{align*}
and
\begin{align*}
I_{2}(t)&:=\langle p(t),\bar{B}(t)(x^{1,\rho}(t)+x^{2,\rho}(t))\rangle+\langle
q(t),x^{1,\rho}(t)+x^{2,\rho}(t)\rangle+\langle p(t),\delta b_{x}(t)x^{1,\rho
}(t)\rangle I_{E_{\rho}}(t)\\
&
 \ \ \ \ +\frac{1}{2}\langle p(t),\bar{b}_{xx}(t)(x^{1,\rho
}(t),x^{1,\rho}(t))\rangle+\frac{1}{2}\mathcal{Z}(t).
\end{align*}

\noindent\textbf{Step 2: Variation calculation.} To obtain the maximum
principle, we consider the variation%
\begin{equation}%
\begin{split}
&  \hat{y}^{\rho}(t)-\frac{1}{2}\sigma(t) =h(x^{\rho}(T))-h(\bar
{x}(T))-\langle h_{x}(\bar{x}(T)),x^{1,\rho}(T)+x^{2,\rho}(T)\rangle-\frac
{1}{2}\langle h_{xx}(\bar{x}(T))x^{1,\rho}(T),x^{1,\rho}(T)\rangle\\
&  \ \ \ \ \ \ \ +\int_{t}^{T}\{k(s,x^{\rho}(s),y^{\rho}(s),z^{\rho
}(s),u^{\rho}(s))-k(s,\bar{x}(s),\bar{y}(s),\bar{z}(s),\bar{u}(s))-I_{1}%
(s)\}ds-\int_{t}^{T}\hat{z}^{\rho}(s){d}w(s),
\end{split}
\label{Myeq4-13}%
\end{equation}
where%
\begin{align*}
\hat{y}^{\rho}(t)  &  :=y^{\rho}(t)-\bar{y}(t)-\langle p(t),x^{1,\rho
}(t)+x^{2,\rho}(t)\rangle-\frac{1}{2}\langle P(t)x^{1,\rho}(t),x^{1,\rho
}(t)\rangle,\\
\hat{z}^{\rho}(t)  &  :=z^{\rho}(t)-\bar{z}(t)-I_{2}(t)-\langle p(t),\delta
b(t)\rangle I_{E_{\rho}}(t).
\end{align*}
Motivated from the Taylor's expansion of the above equation, we introduce the
following BSDE:
\begin{equation}%
\begin{split}
&  \hat{y}(t) =\int_{t}^{T}\{k_{y}(s)\hat{y}(s)+k_{z}(s)\hat{z}(s)+[\langle
p(s),\delta a(s)\rangle+\langle q(s),\delta b(s)\rangle+k(s,\bar{x}(s),\bar
{y}(s),\bar{z}(s)\\
&  \ \ +\langle p(s),\delta b(s)\rangle,u(s)) -k(s,\bar{x}(s),\bar{y}%
(s),\bar{z}(s),\bar{u}(s))+\frac{1}{2}\langle P(s)\delta b(s),\delta
b(s)\rangle]I_{E_{\rho}}(s)\}{d}s-\int_{t}^{T}\hat{z}(s){d}w(s).
\end{split}
\label{eq4-10}%
\end{equation}

\begin{proposition}
\label{Myth4-9} Assume that $(H4)$ and $(H5)$ hold. Then for $\alpha\geq1$,
\begin{align}
\sup_{t\in\lbrack0,T]}  &  \mathbb{E}[|\hat{y}(t)|^{2\alpha}]+\mathbb{E}%
[(\int_{0}^{T}|\hat{z}(t)|^{2}dt)^{\alpha}]=o(\rho^{\alpha}),\label{eq-3-0}\\
\sup_{t\in\lbrack0,T]}  &  \mathbb{E}[|\hat{y}^{\rho}(t)|^{2\alpha
}]+\mathbb{E}[(\int_{0}^{T}|\hat{z}^{\rho}(t)|^{2}dt)^{\alpha}]=o(\rho
^{\alpha}),\label{eq-3-1}\\
\sup_{t\in\lbrack0,T]}  &  \mathbb{E}[|\hat{y}^{\rho}(t)-\hat{y}%
(t)|^{2}]+\mathbb{E}[\int_{0}^{T}|\hat{z}^{\rho}(t)-\hat{z}(t)|^{2}%
dt)]=o(\rho^{2}). \label{eq-3-4}%
\end{align}

\end{proposition}

\begin{proof}
We first prove (\ref{eq-3-0}). Denote
\begin{align*}
	I_{3}(t)  &  :=\langle p(t),x^{1,\rho}(t)+x^{2,\rho}(t)\rangle+\frac{1}
	{2}\langle P(t)x^{1,\rho}(t),x^{1,\rho}(t)\rangle,\\
	I_{4}(t)  &  :=\langle p(t),\delta a(t)\rangle+\langle q(t),\delta
	b(t)\rangle+\frac{1}{2}\langle{P}(t)\delta b(t),\delta b(t)\rangle,\\
	I_{5}(t)  &  :=k(t,\bar{x}(t),\bar{y}(t),\bar{z}(t)+\langle p(t),\delta
	b(t)\rangle,u(t))-k(t,\bar{x}(t),\bar{y}(t),\bar{z}(t),\bar{u}(t)).
\end{align*}
We calculate directly that
\begin{equation}
	\label{Myeq4-30}%
	\begin{split}
		\mathbb{E}[(\int_{0}^{T}|\langle q(t),\delta b(t)\rangle|I_{E_{\rho}
		}(t)dt)^{2\alpha}]  &  \leq(\mathbb{E}[(\int_{0}^{T}\Vert q(t)\Vert_{H}
		^{2}I_{E_{\rho}}(t)dt)^{2\alpha}])^{\frac{1}{2}}(\mathbb{E}[(\int_{0}^{T}
		\Vert\delta b(t)\Vert_{H}^{2}I_{E_{\rho}}(t)dt)^{2\alpha}])^{\frac{1}{2}}\\
		&  \leq\rho^{\frac{2\alpha-1}{2}}(\mathbb{E}[(\int_{0}^{T}\Vert q(t)\Vert
		_{H}^{2}I_{E_{\rho}}(t)dt)^{2\alpha}])^{\frac{1}{2}}(\mathbb{E}[\int_{0}
		^{T}\Vert\delta b(t)\Vert_{H}^{4\alpha}I_{E_{\rho}}(t)dt])^{\frac{1}{2}}\\
		&  =o(\rho^{\alpha}).
	\end{split}
\end{equation}
Then from the a priori estimates for BSDEs and the Lebesgue differentiation
theorem, we have
\begin{align*}
	&  \sup_{t\in\lbrack0,T]}\mathbb{E}[|\hat{y}(t)|^{2\alpha}]+\mathbb{E}
	[(\int_{0}^{T}|\hat{z}(t)|^{2}dt)^{\alpha}]\\
	&  \leq C\rho^{2\alpha-1}\mathbb{E}[(\int_{0}^{T}|\langle p(t),\delta
	a(t)\rangle+\frac{1}{2}\langle{P}(t)\delta b(t),\delta b(t)\rangle
	+I_{5}(t)|^{2\alpha}I_{E_{\rho}}(t)dt]+C\mathbb{E}[(\int_{0}^{T}|\langle
	q(t),\delta b(t)\rangle|I_{E_{\rho}}(t)dt)^{2\alpha}]\\
	&  =O(\rho^{2\alpha})+o(\rho^{\alpha})=o(\rho^{\alpha}).
\end{align*}

We first consider (3.13). By the Taylor's expansion,
\begin{equation}%
\begin{split}
&  \hat{y}^{\rho}(t)-\frac{1}{2}\sigma(t)=J_{4}+\int_{t}^{T}\{\tilde{k}%
_{y}(s)(\hat{y}^{\rho}(s)-\frac{1}{2}\sigma(s))+\tilde{k}_{z}(s)\hat{z}^{\rho
}(s)+J_{3}(s)+\frac{1}{2}J_{5}(s)+\frac{1}{2}\tilde{k}_{y}(s)\sigma(s)\\
&  \ \ \ \ \ \ \ \,+[I_{4}(s)+\langle q(s),\delta b_{x}(s)x^{1,\rho}%
(s)\rangle+k_{z}(s)\langle p(s),\delta b_{x}(s)x^{1,\rho}(s)\rangle
]I_{E_{\rho}}(s)\}ds-\int_{t}^{T}\hat{z}^{\rho}(s){d}w(s),
\end{split}
\label{Myeq4-6}%
\end{equation}
where
\begin{align*}
	&  \tilde{k}_{y}(t):=\int_{0}^{1}k_{y}(t,\bar{x}(t)+x^{1,\rho}(t)+x^{2,\rho
	}(t),\bar{y}(t)+I_{3}(t)+\mu\hat{y}^{\rho}(t),\bar{z}(t)+I_{2}(t)+\mu\hat
	{z}^{\rho}(t),\bar{u}(t))d\mu,\\
	&  \tilde{k}_{z}(t):=\int_{0}^{1}k_{z}(t,\bar{x}(t)+x^{1,\rho}(t)+x^{2,\rho
	}(t),\bar{y}(t)+I_{3}(t)+\mu\hat{y}^{\rho}(t),\bar{z}(s)+I_{2}(t)+\mu\hat
	{z}^{\rho}(t),\bar{u}(t))d\mu,\\
	&  J_{3}(t):=k(t,x^{\rho}(t),y^{\rho}(t),z^{\rho}(t),u^{\rho}(t))\\
	&  \ \ \ \ \ \ \ \ \ \ \, -k(t,\bar{x}(t)\!+\!x^{1,\rho}(t)\!+\!x^{2,\rho
	}(t),\bar{y}(t)\!+\!I_{3}(t)\!+\!\hat{y}^{\rho}(t),\bar{z}(t)\!+\!I_{2}%
	(t)\!+\!\hat{z}^{\rho}(t),\bar{u}(t)),\\
	&  J_{4}:=h(x^{\rho}(T))-h(\bar{x}(T))-\langle h_{x}(\bar{x}(T)),x^{1,\rho
	}(T)+x^{2,\rho}(T)\rangle-\frac{1}{2}\langle h_{xx}(\bar{x}(T))x^{1,\rho
	}(T),x^{1,\rho}(T)\rangle,\\& J_{5}(t):=\tilde{D}^{2}k(t)([x^{1,\rho}(t)+x^{2,\rho}(t),I_{3}%
	(t),I_{2}(t)],[x^{1,\rho}(t)+x^{2,\rho}(t),I_{3}(t),I_{2}(t)])\\
	&  \ \ \ \ \ \ \ \ \ \ \,-\langle D^{2}k(t)([I_{d},p(t),\bar{B}^{\ast
	}(t)p(t)+q(t)],[I_{d},p(t),\bar{B}^{\ast}(t)p(t)+q(t)])x^{1,\rho}%
	(t),x^{1,\rho}(t)\rangle,
\end{align*}
with 
$$ \tilde{D}^{2}k(t):=2\int_{0}^{1}\int_{0}^{1}\mu D^{2}k(t,\bar{x}(t)+\mu
\nu(x^{1,\rho}(t)+x^{2,\rho}(t)),\bar{y}(t)+\mu\nu I_{3}(t),\bar{z}(t)+\mu\nu
I_{2}(t),\bar{u}(t))d\mu d\nu. $$
 We can write $$J_{5}%
(t)=J_{6}(t)+J_{7}(t),$$ where%
\begin{align*}
J_{6}(t):= &  \langle\tilde{D}^{2}k(t)([I_{d},p(t),\bar{B}^{\ast
}(t)p(t)+q(t)],[I_{d},p(t),\bar{B}^{\ast}(t)p(t)+q(t)])x^{1,\rho}%
(t),x^{1,\rho}(t)\rangle\\
&  -\langle D^{2}k(t)([I_{d},p(t),\bar{B}^{\ast}(t)p(t)+q(t)],[I_{d}%
,p(t),\bar{B}^{\ast}(t)p(t)+q(t)])x^{1,\rho}(t),x^{1,\rho}(t)\rangle,\\
J_{7}(t):= &  \tilde{D}^{2}k(t)([x^{1,\rho}(t)+x^{2,\rho}(t),I_{3}%
(t),I_{2}(t)],[x^{1,\rho}(t)+x^{2,\rho}(t),I_{3}(t),I_{2}(t)])\\
&  -\langle\tilde{D}^{2}k(t)([I_{d},p(t),\bar{B}^{\ast}(t)p(t)+q(t)],[I_{d}%
,p(t),\bar{B}^{\ast}(t)p(t)+q(t)])x^{1,\rho}(t),x^{1,\rho}(t)\rangle.
\end{align*}
First, under assumption $(H4)$, we can check that
\begin{equation}
	|\langle v,B(t,\omega)w\rangle|=|\langle B^{\ast}(t,\omega)v,w\rangle|\leq
	C(K)\Vert v\Vert_{V}\Vert w\Vert_{H},\quad\text{for}\ v,w\in V\ \text{and}%
	\ (t,\omega)\in\lbrack0,T]\times\Omega.\label{Myeq4-25}%
\end{equation}
Indeed, for any $(t,\omega)\in\lbrack0,T]\times\Omega,$ recall that the
coercivity condition ((1) in the assumption $(H4)$) implies $\Vert B(t,\omega)v\Vert_{H}\leq C(K)\Vert
v\Vert_{V}$, for $v\in V$. That is,
\[
\Vert B(t,\omega)\Vert_{\mathfrak{L}(V,H)}\leq C(K).
\]
Moreover, according to \cite[Remark 2.4 (2)]{DM13}, we have $B(t,\omega
)+B^{\ast}(t,\omega)\in\mathfrak{L}(H).$ From (2) in the assumption $(H4)$, we
also have $|\langle v,(B(t,\omega)+B^{\ast}(t,\omega))v\rangle|=2|\langle
v,B(t,\omega)v\rangle|\leq2K\Vert v\Vert_{H}^{2},$ for $v\in V.$ Then by
\cite[Theorem VII.3.3]{Yo80}, we have
\begin{align*}
	\Vert B(t,\omega)+B^{\ast}(t,\omega)\Vert_{\mathfrak{L}(H)} &  =\sup_{v\in
		H,\Vert v\Vert_{H}\leq1}|\langle v,(B(t,\omega)+B^{\ast}(t,\omega))v\rangle|\\
	&  =\sup_{v\in V,\Vert v\Vert_{H}\leq1}|\langle v,(B(t,\omega)+B^{\ast
	}(t,\omega))v\rangle|\\
	&  \leq2K.
\end{align*}
Thus from $B^{\ast}(t,\omega)=(B(t,\omega)+B^{\ast}(t,\omega))-B(t,\omega),$
we deduce that $\Vert B^{\ast}(t,\omega)\Vert_{\mathfrak{L}(V,H)}\leq C(K),$
which implies (\ref{Myeq4-25}). Now denoting $\Vert\tilde{D}^{2}%
k(t)-D^{2}k(t)\Vert:=\Vert\tilde{D}^{2}k(t)-D^{2}k(t)\Vert_{\mathfrak{L}%
	_{2}((H\times\mathbb{R\times R)\times}(H\times\mathbb{R\times R)};\mathbb{R}%
	)}$, from (\ref{Myeq4-25}) we have
\begin{align*}
&  \mathbb{E}[(\int_{0}^{T}|J_{6}(t)|dt)^{2\alpha}]\\
&\ \ \ \,\ \ \ \,\leq C\mathbb{E}[(\int%
_{0}^{T}\Vert\tilde{D}^{2}k(t)-D^{2}k(t)\Vert((1+\Vert p(t)\Vert_{H}^{2})\Vert
x^{1,\rho}(t)\Vert_{H}^{2}\\
&  \ \ \ \,\ \ \ \,\ \ \ \,+\Vert p(t)\Vert_{V}^{2}\Vert x^{1,\rho}%
(t)\Vert_{H}^{2}+\Vert q(t)\Vert_{H}^{2}\Vert x^{1,\rho}(t)\Vert_{H}%
^{2})dt)^{2\alpha}]\\
&  \ \ \ \,\ \ \ \,\leq C(\mathbb{E}[\int_{0}^{T}\Vert\tilde{D}^{2}%
k(t)-D^{2}k(t)\Vert^{4\alpha}(1+\Vert p(t)\Vert_{H}^{8\alpha})dt])^{\frac
	{1}{2}}(\mathbb{E}[\int_{0}^{T}\Vert x^{1,\rho}(t)\Vert_{H}^{8\alpha
}dt])^{\frac{1}{2}}\\
&  \ \ \ \,\ \ \ \,\ \ \ \,+C(\mathbb{E}[(\int_{0}^{T}\Vert\tilde{D}%
^{2}k(t)-D^{2}k(t)\Vert\Vert p(t)\Vert_{V}^{2}dt)^{4\alpha}])^{\frac{1}{2}%
}(\mathbb{E}[\sup_{t\in\lbrack0,T]}\Vert x^{1,\rho}(t)\Vert_{H}^{8\alpha
}])^{\frac{1}{2}}\\
&  \ \ \ \,\ \ \ \,\ \ \ \,+C(\mathbb{E}[(\int_{0}^{T}\Vert\tilde{D}%
^{2}k(t)-D^{2}k(t)\Vert\Vert q(t)\Vert_{H}^{2}dt)^{4\alpha}])^{\frac{1}{2}%
}(\mathbb{E}[\sup_{t\in\lbrack0,T]}\Vert x^{1,\rho}(t)\Vert_{H}^{8\alpha
}])^{\frac{1}{2}}\\
&  \ \ \ \,\ \ \ \,=o(\rho^{2\alpha}).
\end{align*}
Furthermore, we can decompose $$J_{7}(t)=J_{7a}(t)+J_{7b}(t),$$ where%
\begin{align*}
J_{7a}(t):= &  \tilde{D}^{2}k(t)([x^{1,\rho}(t)+x^{2,\rho}(t),I_{3}%
(t),I_{2}(t)],[x^{2,\rho}(t),\langle p(t),x^{2,\rho}(t)\rangle+\frac{1}%
{2}\langle P(t)x^{1,\rho}(t),x^{1,\rho}(t)\rangle,I_{2}(t)\\
&  -\langle p(t),\bar{B}(t)x^{1,\rho}(t)\rangle-\langle q(t),x^{1,\rho
}(t)\rangle])
\end{align*}
and 
\begin{align*}
J_{7b}(t):= &  \langle\tilde{D}^{2}k(t)([x^{2,\rho}(t),\langle p(t),x^{2,\rho
}(t)\rangle+\frac{1}{2}\langle P(t)x^{1,\rho}(t),x^{1,\rho}(t)\rangle
,I_{2}(t)-\langle p(t),\bar{B}(t)x^{1,\rho}(t)\rangle\\
& -\langle q(t),x^{1,\rho
}(t)\rangle], \lbrack x^{1,\rho}(t),\langle p(t),x^{1,\rho}(t)\rangle,\langle
p(t),\bar{B}(t)x^{1,\rho}(t)\rangle+\langle q(t),x^{1,\rho}(t)\rangle]).
\end{align*}
From a
similar analysis as for $J_{6},$ we have%
\begin{align*}
	&  \mathbb{E}[(\int_{0}^{T}|J_{7b}(t)|dt)^{2\alpha}]\\
	&\leq C\mathbb{E}[(\int%
	_{0}^{T}(\Vert x^{2,\rho}(t)\Vert_{H}+|\langle p(t),x^{2,\rho}(t)\rangle
	+\frac{1}{2}\langle P(t)x^{1,\rho}(t),x^{1,\rho}(t)\rangle|+|I_{2}(t)\\
	& \ \ \ \, -\langle p(t),\bar{B}(t)x^{1,\rho}(t)\rangle-\langle q(t),x^{1,\rho
	}(t)\rangle|)(\Vert x^{1,\rho}(t)\Vert_{H}+|\langle p(t),x^{1,\rho}%
	(t)\rangle|+|\langle p(t),\bar{B}(t)x^{1,\rho}(t)\rangle\\
	&\ \ \  \,+\langle
	q(t),x^{1,\rho}(t)\rangle|)dt)^{2\alpha}]\\
	&  \leq C(\mathbb{E}[(\int_{0}^{T}\Vert x^{2,\rho}(t)\Vert_{H}^{2}+\Vert
	p(t)\Vert_{H}^{2}\Vert x^{2,\rho}(t)\Vert_{H}^{2}+\Vert p(t)\Vert_{V}^{2}\Vert
	x^{2,\rho}(t)\Vert_{H}^{2}+\Vert q(t)\Vert_{H}^{2}\Vert x^{2,\rho}(t)\Vert
	_{H}^{2}\\
	& \ \ \ \,  +\Vert p(t)\Vert_{H}^{2}\Vert x^{1,\rho}(t)\Vert_{H}^{2}I_{E_{\rho}%
	}(t)+(\Vert p(t)\Vert_{H}^{2}+\Vert P(t)\Vert_{\mathfrak{L}(H)}^{2})\Vert
	x^{1,\rho}(t)\Vert_{H}^{4}+|\mathcal{Z}(t)|^{2})dt)^{2\alpha}])^{\frac{1}{2}%
	}\\
	& \ \ \  \, \cdot(\mathbb{E}[(\int_{0}^{T}(\Vert x^{1,\rho}(t)\Vert_{H}^{2}+\Vert
	p(t)\Vert_{H}^{2}\Vert x^{1,\rho}(t)\Vert_{H}^{2}+\Vert p(t)\Vert_{V}%
	^{2}\Vert x^{1,\rho}(t)\Vert_{H}^{2}+\Vert q(t)\Vert_{H}^{2}\Vert x^{1,\rho
	}(t)\Vert_{H}^{2})dt)^{2\alpha}])^{\frac{1}{2}}\\
	&  =O(\rho^{3\alpha}).
\end{align*}
In the same manner, we derive that $$\mathbb{E}[(\int_{0}^{T}|J_{7a}(t)|dt)^{2\alpha}%
]=O(\rho^{3\alpha}).$$ Thus, $$\mathbb{E}[(\int_{0}^{T}|J_{5}(t)|dt)^{2\alpha
}]=o(\rho^{2\alpha}).$$ 
From Proposition 3.4, it is direct to check that $\mathbb{E}[|J_{4}|^{2\alpha
}]=o(\rho^{2\alpha})$ and $\mathbb{E}[(\int_{0}^{T}|J_{3}(t)|dt)^{2\alpha
}]=O(\rho^{2\alpha})$.
Recall that in (3.9)  we have obtained that
\begin{equation}
\sup_{t\in\lbrack0,T]}\mathbb{E}[|\sigma(t)|^{2\alpha}]=o(\rho^{2\alpha}).
\label{eq3-3}%
\end{equation}
Then by (3.15), (\ref{eq3-3}) and the a priori estimates for classical BSDEs,
\[
\sup_{t\in\lbrack0,T]}\mathbb{E}[|\hat{y}^{\rho}(t)-\frac{1}{2}\sigma
(t)|^{2\alpha}]+\mathbb{E}[(\int_{0}^{T}|\hat{z}^{\rho}(t)|^{2}dt)^{\alpha
}]=o(\rho^{\alpha}).
\]
Making use of (\ref{eq3-3}) again, we obtain (3.13). 

Now we prove the last estimate. Denote
\[
\tilde{x}^{\rho}(t)=x^{\rho}(t)-\bar{x}(t)-x^{1,\rho}(t)-x^{2,\rho
}(t),\] \[\tilde{y}^{\rho}(t)=\hat{y}^{\rho}(t)-\hat{y}(t),\]
\[\tilde{z}^{\rho
}(t)=\hat{z}^{\rho}(t)-\hat{z}(t).
\]
Then from (3.11) and (\ref{Myeq4-6}),
\begin{align*}
\tilde{y}^{\rho}(t)-\frac{1}{2}\sigma(t)= &  J_{4}+\int_{t}^{T}\{k_{y}%
(s)(\tilde{y}^{\rho}(t)-\frac{1}{2}\sigma(s))+k_{z}(s)\tilde{z}^{\rho
}(s)+\frac{1}{2}\tilde{k}_{y}(s)\sigma(s)\\
&+(\tilde{k}_{y}(s)-k_{y}(s))(\hat
{y}^{\rho}(s)-\frac{1}{2}\sigma(s)) +(\tilde{k}_{z}(s)-k_{z}(s))\hat{z}^{\rho}(s)+\frac{1}{2}J_{5}(s)\\
& +[\langle
q(s),\delta b_{x}(s)x^{1,\rho}(s)\rangle+k_{z}(s)\langle p(s),\delta
b_{x}(s)x^{1,\rho}(s)\rangle]I_{E_{\rho}}(s)\\
&  +J_{3}(s)-I_{5}(s)I_{E_{\rho}}(s)\}{d}s-\int_{t}^{T}\ \tilde{z}^{\rho
}(s){d}w(s).
\end{align*}
Note that
\begin{align*}
	&  |J_{3}(t)-I_{5}(t)I_{E_{\rho}}(t)|\\
	&\leq C\{\Vert\tilde{x}^{\rho}(t)\Vert
	_{H}+[\Vert x^{1,\rho}(t)+x^{2,\rho}(t)\Vert_{H}+|\hat{y}^{\rho}(t)|+|\hat
	{z}^{\rho}(t)|+|I_{2}(t)|+|I_{3}(t)|]I_{E_{\rho}}(t)\}\\
	&  \leq C\{\Vert\tilde{x}^{\rho}(t)\Vert_{H}+[|\hat{y}^{\rho}(t)|+|\hat
	{z}^{\rho}(t)|+(1+\Vert p(t)\Vert_{H}+\Vert q(t)\Vert_{H})\Vert x^{1,\rho
	}(t)+x^{2,\rho}(t)\Vert_{H}+\Vert p(t)\Vert_{H}\Vert x^{1,\rho}(t)\Vert_{H}\\
	&  \ \ \ \,+\Vert p(t)\Vert_{V}\Vert x^{1,\rho}(t)+x^{2,\rho}(t)\Vert
	_{H}+|\mathcal{Z}(t)|+(\Vert p(t)\Vert_{H}+\Vert P(t)\Vert_{\mathfrak{L}%
		(H)})\Vert x^{1,\rho}(t)\Vert_{H}^{2}]I_{E_{\rho}}(t)\}.
\end{align*}
We have
\begin{align*}
	&  \mathbb{E}[(\int_{0}^{T}|J_{3}(t)-I_{5}(t)I_{E_{\rho}}(t)|dt)^{2}]\\
	&  \leq C\mathbb{E}[\int_{0}^{T}\Vert\tilde{x}^{\rho}(t)\Vert_{H}^{2}%
	dt]+C\rho\{\mathbb{E}[\int_{0}^{T}(|\hat{y}^{\rho}(t)|^{2}+|\hat{z}^{\rho
	}(t)|^{2}+|\mathcal{Z}(t)|^{2})dt]\\
	&  \ \ \ \,+(\mathbb{E}[\int_{0}^{T}(1+\Vert p(t)\Vert_{H}^{4})I_{E_{\rho}%
	}(t)dt])^{\frac{1}{2}}(\mathbb{E}[\int_{0}^{T}\Vert x^{1,\rho}(t)+x^{2,\rho
	}(t)\Vert_{H}^{4}dt])^{\frac{1}{2}}\\
	&  \ \ \ \,+(\mathbb{E}[\int_{0}^{T}\Vert p(t)\Vert_{H}^{4}I_{E_{\rho}%
	}(t)dt])^{\frac{1}{2}}(\mathbb{E}[\int_{0}^{T}\Vert x^{1,\rho}(t)\Vert_{H}%
	^{4}dt])^{\frac{1}{2}}\\
	&  \ \ \ \,+(\mathbb{E}[(\int_{0}^{T}\Vert q(t)\Vert_{H}^{2}I_{E_{\rho}%
	}(t)dt)^{2}])^{\frac{1}{2}}(\mathbb{E}[\sup_{t\in\lbrack0,T]}\Vert x^{1,\rho
	}(t)+x^{2,\rho}(t)\Vert_{H}^{4}])^{\frac{1}{2}}\\
	&  \ \ \ \,+(\mathbb{E}[(\int_{0}^{T}\Vert p(t)\Vert_{V}^{2}I_{E_{\rho}%
	}(t)dt)^{2}])^{\frac{1}{2}}(\mathbb{E}[\sup_{t\in\lbrack0,T]}\Vert x^{1,\rho
	}(t)+x^{2,\rho}(t)\Vert_{H}^{4}])^{\frac{1}{2}}\\
	&  \ \ \ \,+(\mathbb{E}[\int_{0}^{T}(\Vert p(t)\Vert_{H}^{4}+\Vert
	P(t)\Vert_{\mathfrak{L}_{2}(H\mathbb{\times}H)}^{4})I_{E_{\rho}}%
	(t)dt])^{\frac{1}{2}}(\mathbb{E}[\int_{0}^{T}\Vert x^{1,\rho}(t)\Vert_{H}%
	^{8}dt])^{\frac{1}{2}}\}\\
	&  =o(\rho^{2}).
\end{align*}
Analogously, from
\begin{align*}
&|\tilde{k}_{y}(t)-k_{y}(t)|+|\tilde{k}_{z}(t)-k_{z}(t)|\\
&\leq C[\Vert x^{1,\rho
}(t)+x^{2,\rho}(t)\Vert_{H}+|I_{2}(t)|+|I_{3}(t)|+|\hat{y}^{\rho}(t)|+|\hat
{z}^{\rho}(t)|]\\
	&  \leq C[|\hat{y}^{\rho}(t)|+|\hat
{z}^{\rho}(t)|+(1+\Vert p(t)\Vert_{H}+\Vert q(t)\Vert_{H})\Vert x^{1,\rho
}(t)+x^{2,\rho}(t)\Vert_{H}+\Vert p(t)\Vert_{H}\Vert x^{1,\rho}(t)\Vert_{H}\\
&  \ \ \ \,+\Vert p(t)\Vert_{V}\Vert x^{1,\rho}(t)+x^{2,\rho}(t)\Vert
_{H}+|\mathcal{Z}(t)|+(\Vert p(t)\Vert_{H}+\Vert P(t)\Vert_{\mathfrak{L}%
	(H)})\Vert x^{1,\rho}(t)\Vert_{H}^{2}],
	\end{align*}
we also obtain
\begin{align*}
	&  \mathbb{E}[(\int_{0}^{T}|(\tilde{k}_{y}(t)-k_{y}(t))\hat{y}^{\rho
	}(t)+(\tilde{k}_{z}(t)-k_{z}(t))\hat{z}^{\rho}(t)|dt)^{2}]\\
	&  \leq C(\mathbb{E}[(\int_{0}^{T}(|\tilde{k}_{y}(t)-k_{y}(t)|^{2}+|\tilde
	{k}_{z}(t)-k_{z}(t)|^{2})dt)^{2}])^{\frac{1}{2}}(\mathbb{E}[(\int_{0}^{T}%
	|\hat{y}^{\rho}(t)|^{2}+|\hat{z}^{\rho}(t)|^{2}dt)^{2}])^{\frac{1}{2}}\\
	&  \leq C(\mathbb{E}[(\int_{0}^{T}(|\hat{y}^{\rho}(t)|^{2}+|\hat{z}^{\rho
	}(t)|^{2}+(1+\Vert p(t)\Vert_{H}^{2}+\Vert q(t)\Vert_{H}^{2})\Vert x^{1,\rho
	}(t)+x^{2,\rho}(t)\Vert_{H}^{2}+\Vert p(t)\Vert_{H}^{2}\Vert x^{1,\rho
	}(t)\Vert_{H}^{2}\\
	&  \ \ \ \,+\,\Vert p(t)\Vert_{V}^{2}\Vert x^{1,\rho}(t)+x^{2,\rho}%
	(t)\Vert_{H}^{2}+|\mathcal{Z}(t)|^{2}+(\Vert p(t)\Vert_{H}^{2}+\Vert
	P(t)\Vert_{\mathfrak{L}(H)}^{2})\Vert x^{1,\rho}(t)\Vert_{H}^{4}%
	)dt)^{2}])^{\frac{1}{2}}\\
	&  \ \ \ \,\cdot(\mathbb{E}[(\int_{0}^{T}|\hat{y}^{\rho}(t)|^{2}+|\hat
	{z}^{\rho}(t)|^{2}dt)^{2}])^{\frac{1}{2}}\\
	&  \leq C\{(\mathbb{E}[(\int_{0}^{T}(|\hat{y}^{\rho}(t)|^{2}+|\hat{z}^{\rho
	}(t)|^{2}+|\mathcal{Z}(t)|^{2})dt)^{2}])^{\frac{1}{2}}\\
	&  \ \ \ \,+(\mathbb{E}[\int_{0}^{T}(1+\Vert p(t)\Vert_{H}^{8})dt])^{\frac
		{1}{4}}(\mathbb{E}[\int_{0}^{T}\Vert x^{1,\rho}(t)+x^{2,\rho}(t)\Vert_{H}%
	^{8}dt])^{\frac{1}{4}}\\
	&  \ \ \ \,+(\mathbb{E}[(\int_{0}^{T}\Vert q(t)\Vert_{H}^{2}dt)^{4}%
	])^{\frac{1}{4}}(\mathbb{E}[\sup_{t\in\lbrack0,T]}\Vert x^{1,\rho
	}(t)+x^{2,\rho}(t)\Vert_{H}^{8}])^{\frac{1}{4}}+(\mathbb{E}[\int_{0}^{T}\Vert
	p(t)\Vert_{H}^{8}dt])^{\frac{1}{4}}(\mathbb{E}[\int_{0}^{T}\Vert x^{1,\rho
	}(t)\Vert_{H}^{8}dt])^{\frac{1}{4}}\\
	&  \ \ \ \,+(\mathbb{E}[(\int_{0}^{T}\Vert p(t)\Vert_{V}^{2}dt)^{4}%
	])^{\frac{1}{4}}(\mathbb{E}[\sup_{t\in\lbrack0,T]}\Vert x^{1,\rho
	}(t)+x^{2,\rho}(t)\Vert_{H}^{8}])^{\frac{1}{4}}\\
	&  \ \ \ \,+(\mathbb{E}[\int_{0}^{T}(\Vert p(t)\Vert_{H}^{8}+\Vert
	P(t)\Vert_{\mathfrak{L}(H)}^{8})dt])^{\frac{1}{4}}(\mathbb{E}[\int_{0}%
	^{T}\Vert x^{1,\rho}(t)\Vert_{H}^{16}dt])^{\frac{1}{4}}\}(\mathbb{E}[(\int%
	_{0}^{T}|\hat{y}^{\rho}(t)|^{2}+|\hat{z}^{\rho}(t)|^{2}dt)^{2}])^{\frac{1}{2}%
	}\\
	&  =o(\rho^{2}).
\end{align*}
Therefore,
\[
\sup_{t\in\lbrack0,T]}\mathbb{E}[|\tilde{y}^{\rho}(t)-\frac{1}{2}%
\sigma(t)|^{2}]+\mathbb{E}[\int_{0}^{T}|\tilde{z}^{\rho}(t)|^{2}dt]=o(\rho
^{2}).
\]
This, together with (\ref{eq3-3}), implies (3.14).
\end{proof}
\begin{remark}\label{Rm-B}
	\upshape{From the proofs we can know that if $B\equiv 0$ or $k$ does not contain $z$, it is not necessary to estimate $p$ in the space $V$ in (\ref{Myeq4-22}).}
\end{remark}

\noindent\textbf{Step 3: Duality for BSDEs and the completion of the proof.}
Consider the following adjoint equation for BSDE (\ref{eq4-10}):
\begin{equation}
\lambda(t)=1+\int_{0}^{t}k_{y}(s)\,\lambda(s){d}s+\int_{0}^{t}k_{z}%
(s)\,\lambda(s){d}w(s). \label{Myeq5-5}%
\end{equation}
Applying It\^{o}'s formula to $\lambda(t)\hat{y}(t)$, we get
\begin{align*}
\hat{y}(0)=  &  \mathbb{E}\int_{0}^{T}\lambda(t)[\langle p(t),\delta
a(t)\rangle+\langle q(t),\delta b(t)\rangle+k(t,\bar{x}(t),\bar{y}(t),\bar
{z}(t)+\langle p(t),\delta b(t)\rangle,u(t))\\
&  -k(t,\bar{x}(t),\bar{y}(t),\bar{z}(t),\bar{u}(t)) +\frac{1}{2}\langle
P(t)\delta b(t),\delta b(t)\rangle]I_{E_{\rho}}(t){d}t.
\end{align*}
From the optimization assumption and (\ref{eq-3-4}),
\begin{align*}
0  &  \leq J(u^{\rho}(\cdot))-J(\bar{u}(\cdot))=y^{\rho}(0)-\bar{y}(0)\\
&  =\hat{y}^{\rho}(0)+\langle p(0),x^{1,\rho}(0)+x^{2,\rho}(0)\rangle+\frac
{1}{2}\langle P(0)x^{1,\rho}(0),x^{1,\rho}(0)\rangle\\
&  =\hat{y}(0)+o(\rho)\\
&  =\mathbb{E}\int_{0}^{T}\lambda(t)[\langle p(t),\delta a(t)\rangle+\langle
q(t),\delta b(t)\rangle+k(t,\bar{x}(t),\bar{y}(t),\bar{z}(t)+\langle
p(t),\delta b(t)\rangle,u(t))-k(t,\bar{x}(t),\bar{y}(t),\bar{z}(t),\bar
{u}(t))\\
&  \ \ \ \, +\frac{1}{2}\langle P(t)\delta b(t),\delta b(t)\rangle]I_{E_{\rho
}}(t)dt+o(\rho).
\end{align*}
Note that $\lambda(t)>0$ for $t\in\lbrack0,T],$ we then obtain the pointwise
maximum principle as%
\begin{align*}
&  \langle p(t),\delta a(t;v)\rangle+\langle q(t),\delta b(t;v)\rangle
+k(t,\bar{x}(t),\bar{y}(t),\bar{z}(t)+\langle p(t),\delta b(t;v)\rangle
,v)-k(t,\bar{x}(t),\bar{y}(t),\bar{z}(t),\bar{u}(t))\\
&  +\frac{1}{2}\langle{P}(t)\delta b(t;v),\delta b(t;v)\rangle\geq
0,\quad\forall v\in U,\ P\text{-a.s. a.e.,}%
\end{align*}
which can also be written as (\ref{MP}). The proof is now complete.

\subsection{Application on controlled SPDEs}

We present an example of controlled SPDEs that fits our framework. Let $G$ be
a bounded domain in $\mathbb{R}^{n}$. Consider super-parabolic stochastic PDE
(cf. \cite{Ro18})%
\[%
\begin{cases}
{d}x(t,\zeta) & =[\sum_{i,j=1}^{n}\partial_{\zeta_{i}}(\alpha_{ij}
(t,\zeta)\partial_{\zeta_{j}}x(t,\zeta))+a(t,\zeta,u(t),x(t,\zeta))]{d}
t+[\sum_{i=1}^{n}\beta_{i}(t,\zeta)\partial_{\zeta_{i}}x(t,\zeta)\\
& +b(t,\zeta,u(t),x(t,\zeta))]{d}w(t),\text{ }(t,\zeta)\in\lbrack0,T]\times
G,\\
x(0,\zeta) & =x_{0}(\zeta),\text{ }\zeta\in G,\\
x(t,\zeta) & =0,\text{ }(t,\zeta)\in\lbrack0,T]\times\partial G,
\end{cases}
\]
Here $\alpha_{ij},\beta_{i},a,b$ and\ $x_{0}$\ are given coefficients and
initial value, respectively.\ The control $u(t)$ is a progressive process
taking values in some metric space $U$. We consider the problem of minimizing
the cost functional
\[
J(u(\cdot))=y(0),
\]
where $y$ is the recursive utility subjected to a BSDE:
\[
y(t)=\int_{G}h(\zeta,x(T,\zeta))d\zeta+\int_{t}^{T}\int_{G}k(s,\zeta
,y(s),z(s),u(s),x(s,\zeta))d\zeta ds-\int_{t}^{T}z(s)dw({s}).
\]
We impose standard measurability conditions on the coefficients. We take
\[
H=L^{2}(G),\text{ }V=H_{0}^{1}(G),\text{ }A=\sum_{i,j=1}^{n}\partial
_{\zeta_{i}}(\alpha_{ij}(t,\zeta)\partial_{\zeta_{j}}),\text{ }B=\sum
_{i=1}^{n}\beta_{i}(t,\zeta)\partial_{\zeta_{i}}.
\]
To guarantee the condition (H4), we assume there exist some constants
$0<\kappa\leq K$ such that%
\[
\kappa I_{n\times n}+(\beta_{i}\beta_{j})_{n\times n}\leq2(\alpha
_{ij})_{n\times n}\leq KI_{n\times n},
\]
the function $\beta_{i}$ is continuously differentiable with respect to
$\zeta,$ and $\alpha_{ij},\beta_{i},\partial_{\zeta_{i}}\beta_{i}$ are bounded
by $K.$ Indeed, the proof for the coercivity condition is standard and can be
found in \cite{Ro18}) and the quasi-skew-symmetry condition can be deduced by
the observation that%
\begin{align*}
\int_{G}(\beta_{i}(t,\zeta)\partial_{\zeta_{i}}x(t,\zeta))x(t,\zeta)d\zeta &
=-\int_{G}x(t,\zeta)\partial_{\zeta_{i}}(\beta_{i}(t,\zeta)x(t,\zeta))d\zeta\\
&  =-\int_{G}x(t,\zeta)\beta_{i}(t,\zeta)\partial_{\zeta_{i}}x(t,\zeta
)d\zeta-\int_{G}\partial_{\zeta_{i}}\beta_{i}(t,\zeta)|x(t,\zeta)|^{2}d\zeta.
\end{align*}
Next, provided the corresponding differentiation and growth conditions on the
coefficients $a,b,h$ and $k,$ the assumption (H5) can be verified (cf.
\cite{LZ18}). 
	Therefore, we obtain the maximum principle for the above stochastic
optimal control problem.

\section{Appendix}

\subsection{Proof of Proposition \ref{Myth2-7}}

We have the decomposition:
\begin{align*}
|\langle P(t+\delta)u,v\rangle-\langle P(t)u,v\rangle|  &  \leq|\mathbb{E}%
[\langle\xi L(t+\delta,T)u,L(t+\delta,T)v\rangle|\mathcal{F}_{t+\delta
}]-\mathbb{E}[\langle\xi L(t,T)u,L(t+\delta,T)v\rangle|\mathcal{F}_{t+\delta
}]|\\
&  +|\mathbb{E}[\langle\xi L(t,T)u,L(t+\delta,T)v\rangle
|\mathcal{F}_{t+\delta}]-\mathbb{E}[\langle\xi L(t,T)u,L(t,T)v\rangle
|\mathcal{F}_{t+\delta}]|\\
&  +|\mathbb{E}[\langle\xi L(t,T)u,L(t,T)v\rangle|\mathcal{F}%
_{t+\delta}]-\mathbb{E}[\langle\xi L(t,T)u,L(t,T)v\rangle|\mathcal{F}_{t}]|\\
&  +\mathbb{E}[\int_{t+\delta}^{T}|\langle f(s)L(t+\delta,s)u,L(t+\delta
,s)v\rangle-\langle f(s)L(t,s)u,L(t+\delta,s)v\rangle|ds|\mathcal{F}%
_{t+\delta}]\\
&  +\mathbb{E}[\int_{t+\delta}^{T}|\langle f(s)L(t+\delta,s)u,L(t+\delta
,s)v\rangle-\langle f(s)L(t,s)u,L(t+\delta,s)v\rangle|ds|\mathcal{F}%
_{t+\delta}]\\
&  +\mathbb{E}[\int_{t+\delta}^{T}|\langle f(s)L(t,s)u,L(t+\delta
,s)v\rangle-\langle f(s)L(t,s)u,L(t,s)v\rangle|ds|\mathcal{F}_{t+\delta}]\\
&  +|\mathbb{E}[\int_{t+\delta}^{T}\langle f(s)L(t,s)u,L(t,s)v\rangle
ds|\mathcal{F}_{t+\delta}]-\mathbb{E}[\int_{t}^{T}\langle
f(s)L(t,s)u,L(t,s)v\rangle ds|\mathcal{F}_{t+\delta}]|\\
&  +|\mathbb{E}[\int_{t}^{T}\langle f(s)L(t,s)u,L(t,s)v\rangle ds|\mathcal{F}%
_{t+\delta}]-\mathbb{E}[\int_{t}^{T}\langle f(s)L(t,s)u,L(t,s)v\rangle
ds|\mathcal{F}_{t}]|.
\end{align*}
We only show the convergence of the first, third and fourth terms, and the
others can be estimated in the same manner. As $\delta\downarrow0,$ we have by
the assumption $(H3)$
\begin{align*}
&  \mathbb{E}[|\mathbb{E}[\langle\xi L(t+\delta,T)u,L(t+\delta,T)v\rangle
|\mathcal{F}_{t+\delta}]-\mathbb{E}[\langle\xi L(t,T)u,L(t+\delta
,T)v\rangle|\mathcal{F}_{t+\delta}]|^{\alpha}]^{\frac{1}{\alpha}}\\
&  \leq(\mathbb{E}[\Vert\xi\Vert_{\mathfrak{L}(H)}^{2\alpha}])^{\frac
{1}{2\alpha}}(\mathbb{E}[\Vert L(t+\delta,T)(u-L(t,t+\delta)u)\Vert
_{H}^{4\alpha}])^{\frac{1}{4\alpha}}(\mathbb{E}[\Vert L(t+\delta,T)v\Vert
_{H}^{4\alpha}])^{\frac{1}{4\alpha}}\\
&  \leq C_{1}(\mathbb{E}[\Vert u-L(t,t+\delta)u\Vert_{H}^{4\alpha}])^{\frac
{1}{4\alpha}}\rightarrow0,
\end{align*}
where $C_{1}$ is a constant independent of $\delta,$ and by the martingale
convergence theorem
\[
\mathbb{E}[|\mathbb{E}[\langle\xi L(t,T)u,L(t,T)v\rangle|\mathcal{F}%
_{t+\delta}]-\mathbb{E}[\langle\xi L(t,T)u,L(t,T)v\rangle|\mathcal{F}%
_{t}]|^{\alpha}]\rightarrow0.
\]
Making use of the assumption $(H3)$ again, we also obtain that, as $\delta\downarrow0,$
\begin{align*}
&  \mathbb{E}[|\mathbb{E}[\int_{t+\delta}^{T}\langle f(s)L(t+\delta
,s)u,L(t+\delta,s)v\rangle ds|\mathcal{F}_{t+\delta}]-\mathbb{E}%
[\int_{t+\delta}^{T}\langle f(s)L(t,s)u,L(t+\delta,s)v\rangle ds|\mathcal{F}%
_{t+\delta}]|^{\alpha}]^{\frac{1}{\alpha}}\\
&  \leq(\mathbb{E}[(\int_{t+\delta}^{T}\Vert f(s)\Vert_{\mathfrak{L}(H)}%
^{2}ds)^{\alpha}])^{\frac{1}{2\alpha}}(\mathbb{E}[\int_{t+\delta}^{T}\Vert
L(t+\delta,s)(u-L(t,t+\delta)u)\Vert_{H}^{4\alpha}ds])^{\frac{1}{4\alpha}%
}(\mathbb{E}[\int_{t+\delta}^{T}\Vert L(t+\delta,T)v\Vert_{H}^{4\alpha
}ds])^{\frac{1}{4\alpha}}\\
&  \leq C_{1}(\mathbb{E}[\int_{t+\delta}^{T}\Vert u-L(t,t+\delta)u\Vert
_{H}^{4\alpha}])^{\frac{1}{4\alpha}}\rightarrow0.
\end{align*}

\subsection{Proof of Theorem \ref{Myth3-7}}

One crucial ingredient in the proof is the following estimate.

\begin{theorem}
\label{Myito2-12} Let the assumptions of Theorem \ref{Myth3-7} hold. Define,
for $t\in\lbrack0,T]$,
\begin{equation}%
\begin{split}
\sigma(t):=  &  \mathbb{E}[\frac{\lambda(T)}{\lambda(t)}\langle\xi
x(T),x(T)\rangle+\int_{t}^{T}\frac{\lambda(s)}{\lambda(t)}\langle
f(s,P(s))x(s),x(s)\rangle ds\\
&  -\int_{t}^{T}\frac{\lambda(s)}{\lambda(t)}\langle P(s)\zeta(s),\zeta
(s)\rangle I_{E_{\rho}}(s)ds|\mathcal{F}_{t}] -\langle P(t)x(t),x(t)\rangle
\end{split}
\label{Myeq4-16}%
\end{equation}
with
\begin{equation}
\lambda(t):=e^{\int_{0}^{t}-\frac12{\beta^{2}(s)}ds+\beta(s)dw({s})}.
\label{Myeq4-29}%
\end{equation}
Then the process $\sigma$ satisfies (\ref{Myeq2-16}).
\end{theorem}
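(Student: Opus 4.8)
The plan is to trade the operator $\tilde L$ for the original evolution operator $L_{A,B}$ times a scalar density, thereby identifying $\sigma$ with the discrepancy produced by the forcing $\zeta I_{E_\rho}$, and then to show that this discrepancy only ``sees'' the diagonal part of a quadratic functional of a stochastic integral supported on $E_\rho$. First I would record the elementary identity
\[
\tilde L(t,s)u=\sqrt{\tfrac{\lambda(s)}{\lambda(t)}}\,L_{A,B}(t,s)u,\qquad t\le s\le T,\ u\in L^{2}(\mathcal F_t,H),
\]
which follows by applying It\^o's formula to $\sqrt{\lambda(\cdot)/\lambda(t)}\,L_{A,B}(t,\cdot)u$ and matching the drift and diffusion against $\tilde A=A+\frac{\beta}{2}B-\frac{\beta^{2}}{8}I_{d}$ and $\tilde B=B+\frac{\beta}{2}I_{d}$ (the square root of the stochastic exponential $\lambda$ in (\ref{Myeq4-29}) contributes exactly the $-\frac{\beta^2}{8}$ and $+\frac{\beta}{2}$ corrections). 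Inserting this into the extended representation (\ref{Myeq2-25}) gives, for $\mathcal F_s$-measurable $u,v$,
\[
\langle P(s)u,v\rangle=\mathbb E\Big[\tfrac{\lambda(T)}{\lambda(s)}\langle\xi L_{A,B}(s,T)u,L_{A,B}(s,T)v\rangle+\int_{s}^{T}\tfrac{\lambda(r)}{\lambda(s)}\langle f(r,P(r))L_{A,B}(s,r)u,L_{A,B}(s,r)v\rangle dr\,\Big|\,\mathcal F_s\Big].
\]
Taking $u=v=x(t)$ shows that $\langle P(t)x(t),x(t)\rangle$ is precisely the $\lambda$-weighted conditional cost of the \emph{homogeneous} trajectory $s\mapsto L_{A,B}(t,s)x(t)$, so $\sigma(t)$ in (\ref{Myeq4-16}) is the difference between the weighted cost of the true forced trajectory $x$ and that of the homogeneous one, corrected by the $\langle P\zeta,\zeta\rangle I_{E_\rho}$ term. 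Since the forcing is supported on $E_\rho=[t_0,t_0+\rho)$, for $t\ge t_0+\rho$ the two trajectories coincide on $[t,T]$ and $I_{E_\rho}\equiv0$ there, whence $\sigma(t)=0$; the analysis reduces to $t<t_0+\rho$, the leading case being $t<t_0$, where $x(t)=0$ and $\langle P(t)x(t),x(t)\rangle=0$.

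Next, using the explicit solution formula for the linear BSDE with $z$-coefficient $\beta$ (equivalently the Girsanov density $\lambda(T)/\lambda(t)$), then plugging the representation of $\langle P(s)\zeta(s),\zeta(s)\rangle$ into the subtracted term and applying the tower property, I would write, for $t<t_0$,
\[
\lambda(t)\sigma(t)=\mathbb E\big[(\mathrm I)+(\mathrm{II})\,\big|\,\mathcal F_t\big],
\]
where $(\mathrm I)=\lambda(T)\big(\langle\xi x(T),x(T)\rangle-\int_{E_\rho}\langle\xi L_{A,B}(s,T)\zeta(s),L_{A,B}(s,T)\zeta(s)\rangle ds\big)$ compares the terminal quadratic form with its ``diagonal'' and $(\mathrm{II})$ is the analogous comparison for the running $f$-cost. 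The estimate (\ref{Myeq2-16}) then amounts to showing $(\mathrm I)$ and $(\mathrm{II})$ are $o(\rho)$ after conditioning.

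The crux is this diagonalization. I would use the (mild) variation-of-constants representation
\[
x(s)=L_{A,B}(t,s)x(t)+\int_t^sL_{A,B}(r,s)\zeta(r)I_{E_\rho}(r)dw(r)-\int_t^sL_{A,B}(r,s)B(r)\zeta(r)I_{E_\rho}(r)dr,
\]
together with the a priori SEE bound (\ref{SEE-estimate}), which gives $\mathbb E[\sup_s\|x(s)\|_H^{4\alpha}]=O(\rho^{2\alpha})$ and makes the drift term $O(\rho)$. Expanding the quadratic forms, the diagonal of the It\^o isometry of the stochastic-integral part reproduces exactly the subtracted integrals over $E_\rho$ and cancels them; the off-diagonal double stochastic integrals are conditionally centered, the drift contributions pair an $O(\rho)$ factor with an $O(\sqrt\rho)$ one, and on the borderline range $t\in[t_0,t_0+\rho)$ the cross terms between the frozen $L_{A,B}(t,s)x(t)$ and the future forcing have covariation localized to $E_\rho$, hence are $O(\rho^{3/2})$. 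In every case the surviving terms carry a strict surplus power $\rho^{1/2}$, and H\"older's inequality with the $L^{4\alpha}$-integrability of $(\xi,f(\cdot,\cdot,0),\zeta)$ from (\ref{Myeq3-13}) delivers the uniform bound $o(\rho^{\alpha})$.

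The main obstacle is that the It\^o-isometry computation above implicitly appeals to an It\^o formula for the operator-valued object $\langle P(s)x(s),x(s)\rangle$, which is precisely what is not yet available (indeed Theorem \ref{Myito2-12} is a stepping stone toward Theorem \ref{Myth3-7}). I expect to resolve this by an approximation argument: replace $(\xi,f)$ and the state by finite-dimensional spectral projections, for which $P$ solves the matrix-valued BSDE of Remark \ref{Rm2-2} and the classical It\^o formula legitimizes the computation (exactly the finite-dimensional derivation following Theorem \ref{Myth3-7}); establish the $o(\rho)$ estimate there with constants independent of the projection, using the a priori bound (\ref{Myeq3-23}) and the stability estimate of Theorem \ref{Myth2-10}; and finally pass to the limit by the continuity of $P$ from Proposition \ref{Myth2-7} together with Lemma \ref{Myth2-5}. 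Keeping all constants uniform in $t$ and in the approximation parameter, while controlling the borderline interval $[t_0,t_0+\rho)$, is the delicate point of the argument.
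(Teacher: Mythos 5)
Your opening moves are correct and essentially coincide with the paper's own Step 1: your identity $\tilde L(t,s)u=\sqrt{\lambda(s)/\lambda(t)}\,L_{A,B}(t,s)u$ is exactly Lemma \ref{Myle4-1} (with $\lambda_1=\sqrt{\lambda}$), your observation that $\sigma(t)=0$ for $t\geq t_0+\rho$ (since $x$ restricted to $[t,T]$ is itself a homogeneous trajectory and $E_\rho\cap[t,T]=\emptyset$) is a clean and legitimate use of the extension of (\ref{Myeq2-25}) to insertions in $L^{4}(\mathcal{F}_t,H)$, and the tower-property reduction of $\lambda(t)\sigma(t)$ to the two ``quadratic form minus diagonal'' comparisons $(\mathrm{I})$ and $(\mathrm{II})$ is sound. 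The proof breaks down exactly at what you yourself call the crux, and the gap is not merely technical.

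First, the mild variation-of-constants formula you rely on is not available in this framework: the integrand $r\mapsto L_{A,B}(r,s)\zeta(r)$ is $\mathcal{F}_s$-measurable rather than $\mathcal{F}_r$-measurable, so $\int_t^s L_{A,B}(r,s)\zeta(r)I_{E_\rho}(r)\,dw(r)$ is not an It\^o integral (in finite dimensions the formula really reads $\Phi(t,s)\int_t^s\Phi(t,r)^{-1}(\cdots)$, and $L_{A,B}$ admits no such inverse factorization here); moreover the correction term $L_{A,B}(r,s)B(r)\zeta(r)$ requires $\zeta(r)\in V$, since $B(r)\in\mathfrak{L}(V;H)$ is unbounded on $H$, whereas (\ref{Myeq3-13}) only provides an $H$-valued $\zeta$. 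This is precisely the obstruction the paper's Proposition \ref{Myle4-7} is designed to circumvent: it replaces the diffusion forcing $\zeta_0 I_{E_\rho}\,dw$ by a single initial datum $\frac{1}{\sqrt{\rho}}\zeta_0(w(t_0+\rho)-w(t_0))$ at time $t_0+\rho$ (first for $V$-valued $\zeta_0$, then by approximation), so that the BSIE representation is applied to an honest homogeneous trajectory and the only ``quadratic in noise'' object is the scalar $(w(t_0+\rho)-w(t_0))^2$, whose $\mathcal{F}_{t_0}$-conditional mean is exactly $\rho$; this, paired with the time-continuity of $P$ from Proposition \ref{Myth2-7}, is what produces the cancellation against the subtracted $\langle P\zeta,\zeta\rangle I_{E_\rho}$ term. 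Second, even granting your formula, the isometry/centering claims fail as stated: $\xi$ and $f(r,P(r))$ are random and the weights $L(r,T)$ anticipate the Brownian increments on $E_\rho$, so the off-diagonal double integrals are not conditionally centered and the diagonal does not literally reproduce $\int_{E_\rho}\langle\xi L_{A,B}(s,T)\zeta(s),L_{A,B}(s,T)\zeta(s)\rangle ds$. Third, your proposed repair by finite-dimensional spectral projections (so that $P$ solves the matrix BSDE of Remark \ref{Rm2-2}) would require projections compatible with both unbounded operators $A$ and $B$ simultaneously -- exactly the separability-type hypothesis this paper is written to avoid -- and the stability result you would need for the limit passage, Theorem \ref{Myth2-10}, compares two BSIEs with the same evolution operator $L$ and perturbed $(\xi,f)$; it gives no control when the dynamics $L$ itself is changed by a projection, so the estimate proved for the projected system cannot be transferred back to the true one.
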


Let us admit for a moment the following result on moving the nonhomogeneous
term from the diffusion to the initial point.

\begin{proposition}
\label{Myle4-7} Suppose $(H4)$ holds. Given any $\alpha\geq1$ and $\zeta
_{0}\in L^{2\alpha}(\mathcal{F}_{t_{0}},V)$, let $y$ solve SEE
\[%
\begin{cases}
{d}y(t) & =A(t)y(t){d}t+[B(t)y(t)+\zeta_{0}I_{E_{\rho}}(t)]{d}w(t),\\
y(0) & =0,
\end{cases}
\]
and define
\[
z(t):=%
\begin{cases}
0, & t<t_{0},\\
\eta(t), & t_{0}\leq t<t_{0}+\rho,\\
z(t):z(t)\text{ solves }z(t)=\eta(t_{0}+\rho)+\int_{t_{0}+\rho}^{t}%
A(s)z(s){d}s+\int_{t_{0}+\rho}^{t}B(s)z(s){d}w(s), & t\geq t_{0}+\rho,
\end{cases}
\]
where
\[
\eta(t):=\frac{1}{\sqrt{\rho}}\zeta_{0}\int_{t_{0}}^{t}I_{E_{\rho}}%
(s){d}w(s),\quad t\geq t_{0}.
\]
Then there exists some constant $C>0$ depending on $\alpha$, $\delta$ and $K$
such that
\[
\mathbb{E}[\sup_{t\in\lbrack0,T]}\Vert y(t)-\sqrt{\rho}z(t)\Vert_{H}^{2\alpha
}]\leq C\mathbb{E}[\Vert\zeta_{0}\Vert_{V}^{2\alpha}]\rho^{2\alpha}\text{.}%
\]

\end{proposition}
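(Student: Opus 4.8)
The plan is to split the time axis at $t_0$ and $t_0+\rho$ and reduce everything to one sharp estimate on the short interval $[t_0,t_0+\rho]$. Since $y(0)=0$ and the forcing $\zeta_0 I_{E_\rho}$ is supported on $[t_0,t_0+\rho)$, one checks that $y\equiv 0\equiv z$ on $[0,t_0]$, while on $[t_0+\rho,T]$ both $y$ and $\sqrt{\rho}\,z$ solve the \emph{same} homogeneous SEE with operators $(A,B)$, their values at $t_0+\rho$ being $y(t_0+\rho)$ and $\sqrt{\rho}\,\eta(t_0+\rho)=\zeta_0(w(t_0+\rho)-w(t_0))$, respectively. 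Writing $r:=y-\sqrt{\rho}\,z$, linearity of the flow gives $y(t)-\sqrt{\rho}\,z(t)=L_{A,B}(t_0+\rho,t)\,r(t_0+\rho)$ for $t\ge t_0+\rho$, so the basic SEE estimate \eqref{SEE-estimate}, applied to the homogeneous equation with initial datum $r(t_0+\rho)\in L^{2\alpha}(\mathcal F_{t_0+\rho},H)$, yields $\mathbb E[\sup_{t\in[t_0+\rho,T]}\|y(t)-\sqrt{\rho}\,z(t)\|_H^{2\alpha}]\le C\,\mathbb E[\|r(t_0+\rho)\|_H^{2\alpha}]$. Hence it suffices to prove $\mathbb E[\sup_{t\in[t_0,t_0+\rho]}\|r(t)\|_H^{2\alpha}]\le C\,\mathbb E[\|\zeta_0\|_V^{2\alpha}]\,\rho^{2\alpha}$.

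On $[t_0,t_0+\rho]$ we have $\sqrt{\rho}\,z(t)=\zeta_0(w(t)-w(t_0))$, so subtracting $\zeta_0\,dw$ cancels the nonhomogeneous diffusion and $r$ solves the genuine variational SEE $dr=A(s)y(s)\,ds+B(s)y(s)\,dw(s)$, $r(t_0)=0$, with $y=r+\sqrt{\rho}\,z$ and $y,z\in L^2(t_0,t_0+\rho;V)$ (using $\zeta_0\in V$). I would then apply It\^o's formula in the Gelfand triple to $\|r\|_H^2$, legitimate because the drift $Ay\in L^2(V^*)$; this $V^*$-valued drift is precisely why \eqref{SEE-estimate} cannot be applied to $r$ directly. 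Rewriting $2\langle Ay,r\rangle_*+\|By\|_H^2=[2\langle Ay,y\rangle_*+\|By\|_H^2]-2\sqrt{\rho}\langle Ay,z\rangle_*$, the coercivity in (H4)(i) bounds the bracket by $-\delta\|y\|_V^2+K\|y\|_H^2$, Young's inequality absorbs the cross term into $\tfrac{\delta}{2}\|y\|_V^2+C\rho\|z\|_V^2$, and $\|y\|_H^2\le 2\|r\|_H^2+2\rho\|z\|_H^2$ with $\|z\|_H\le C\|z\|_V$ converts the drift into $2\langle Ay,r\rangle_*+\|By\|_H^2\le 2K\|r\|_H^2+C\rho\|z\|_V^2-\tfrac{\delta}{2}\|y\|_V^2$.

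For the $2\alpha$-th moment I would raise $\|r\|_H^2$ to the power $\alpha$, use the Burkholder--Davis--Gundy inequality on the martingale $\int 2\langle By,r\rangle\,dw$, and Young's inequality to absorb $\tfrac14\,\mathbb E[\sup\|r\|_H^{2\alpha}]$ to the left. The crucial point is to keep the martingale integrand quadratically small in the small quantity $r$: splitting $\langle By,r\rangle=\langle Br,r\rangle+\sqrt{\rho}\langle Bz,r\rangle$ and invoking the quasi-skew-symmetry $|\langle Br,r\rangle|\le K\|r\|_H^2$ of (H4)(ii), the quadratic variation is bounded by $C\int(\|r\|_H^4+\rho\|z\|_V^2\|r\|_H^2)\,ds$. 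The forcing is controlled through the elementary identity $\int_{t_0}^{t_0+\rho}\|z(s)\|_V^2\,ds=\rho^{-1}\|\zeta_0\|_V^2\int_{t_0}^{t_0+\rho}|w(s)-w(t_0)|^2\,ds$, whose $\alpha$-th moment is $O(\rho^{\alpha})\,\mathbb E[\|\zeta_0\|_V^{2\alpha}]$ by the $\mathcal F_{t_0}$-measurability of $\zeta_0$, independence of future increments, and BDG; hence $\mathbb E[(\rho\int\|z\|_V^2\,ds)^{\alpha}]=O(\rho^{2\alpha})$. A Gronwall argument (the retarded factor $\rho^{\alpha-1}$ arising from H\"older in $(\int\|r\|_H^2\,ds)^{\alpha}$ is harmless for $\rho$ bounded) then gives $\mathbb E[\sup_{[t_0,t_0+\rho]}\|r\|_H^{2\alpha}]\le C\rho^{2\alpha}\mathbb E[\|\zeta_0\|_V^{2\alpha}]$, which closes the argument by the reduction of the first paragraph.

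The main obstacle is attaining the \emph{sharp} rate $\rho^{2\alpha}$ rather than $\rho^{\alpha}$. Estimating the diffusion crudely by $\|By\|_H^2\le C\|y\|_V^2$ in the martingale term would reintroduce $\int\|y\|_V^2\,ds=O(\rho)$ and cap the rate at $\rho^{\alpha}$; the cure is to exploit quasi-skew-symmetry so that the martingale integrand is $O(\|r\|_H^2)+O(\sqrt{\rho}\,\|z\|_V\|r\|_H)$, together with the fact that $\int\|z\|_V^2\,ds$ is genuinely of order $\rho$ with $\alpha$-moment of order $\rho^{\alpha}$. Matching these two higher-order effects against the Gronwall structure is the delicate part; the remainder is the standard coercivity/BDG machinery for variational SEEs.
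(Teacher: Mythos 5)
Your proof is correct and follows essentially the same route as the paper: the identical three-interval decomposition (difference vanishes on $[0,t_0]$, homogeneous-flow estimate on $[t_0+\rho,T]$, sharp bound on $E_\rho$), with the short-interval estimate obtained from It\^{o}'s formula in the Gelfand triple, coercivity to absorb the $V^{\ast}$-valued drift, quasi-skew-symmetry to keep the martingale integrand quadratic in the small difference, and the explicit $O(\rho^{\alpha})$ moment of $\int\Vert\eta\Vert_{V}^{2}\,ds$. The only difference is organizational: the paper factors the short-interval step through the general a priori estimate of Lemma \ref{Myth4-4} (applied to $\delta=y/\sqrt{\rho}-\eta$, whose equation has forcing $(A\eta,B\eta)$) and closes that lemma with an exponential weight $e^{-\gamma t}$ rather than Gronwall, whereas you inline the same computation for $r=y-\sqrt{\rho}\,z$ directly.
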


\begin{proof}
[Proof of Theorem \ref{Myito2-12}]The proof is divided into the following
three steps. Moreover, we only need to give the estimate of $\mathbb{E}%
[|\sigma(t)|^{\alpha}]$ for any given $t$, since this bound can be chosen to
be independent of $t$ according to the latter proof.

\textit{Step 1: an auxiliary approximation result.} By the following Lemma
\ref{Myle4-1}, we have%
\[
\tilde{L}(\hat{t},s)=\frac{\lambda_{1}(s)}{\lambda_{1}(\hat{t})}L(\hat
{t},s),\quad\text{for any}\ \hat{t}\leq s\leq T,
\]
with
\[
L(\hat{t},s):=L_{A,B}(\hat{t},s)\quad\text{and}\quad\lambda_{1}(s):=e^{\int%
_{0}^{s}-\frac14\beta^{2}(r)dr+\frac12\beta(r)dw({r})}.
\]
Noting that $\lambda=\lambda_{1}\cdot\lambda_{1},$ then
\[
P(\hat{t})=\mathbb{E}[\frac{\lambda(T)}{\lambda(\hat{t})}L^{\ast}(\hat
{t},T)\xi L(\hat{t},T)+\int_{\hat{t}}^{T}\frac{\lambda(s)}{\lambda(\hat{t}%
)}L^{\ast}(\hat{t},s)f(s,P(s))L^{\ast}(\hat{t},s)ds|\mathcal{F}_{\hat{t}}].
\]
Given any $\zeta_{0}\in L^{4\alpha}(\mathcal{F}_{t_{0}},V),$ we define $z(t)$
as in Proposition \ref{Myle4-7}. For $\hat{t}\geq t_{0}+\rho,$ it holds that
$L(\hat{t},s)z(\hat{t})=z(s)$ for $s\geq\hat{t}$, and thus
\begin{equation}
\label{Myeq5-6}%
\begin{split}
\langle P(\hat{t})z(\hat{t}),z(\hat{t})\rangle &  =\mathbb{E}[\frac
{\lambda(T)}{\lambda(\hat{t})}\langle\xi L(\hat{t},T)z(\hat{t}),L(\hat
{t},T)z(\hat{t})\rangle+\int_{\hat{t}}^{T}\frac{\lambda(s)}{\lambda(\hat{t}%
)}\langle f(s,P(s))L(\hat{t},s)z(\hat{t}),L(\hat{t},s)z(\hat{t})\rangle
ds|\mathcal{F}_{\hat{t}}]\\
&  =\mathbb{E}[\frac{\lambda(T)}{\lambda(\hat{t})}\langle\xi z(T),z(T)\rangle
+\int_{\hat{t}}^{T}\frac{\lambda(s)}{\lambda(\hat{t})}\langle
f(s,P(s))z(s),z(s)\rangle ds|\mathcal{F}_{\hat{t}}].
\end{split}
\end{equation}
Fix any $t\in\lbrack0,T].$ Based on (\ref{Myeq5-6}), we separate our
discussions into two cases: (1) $t>t_{0}$; (2) $t\leq t_{0}$. For the first
case, when $\rho$ is small, it holds that $t\geq t_{0}+\rho$ and then
\begin{equation}
\langle P(t)z(t),z(t)\rangle=\mathbb{E}[\frac{\lambda(T)}{\lambda(t)}%
\langle\xi z(T),z(T)\rangle+\int_{t}^{T}\frac{\lambda(s)}{\lambda(t)}\langle
f(s,P(s))z(s),z(s)\rangle ds|\mathcal{F}_{t}]. \label{Myeq4-4}%
\end{equation}
For the second case,\ we have
\begin{align*}
\langle P(t_{0}+\rho)z(t_{0}+\rho),z(t_{0}+\rho)\rangle &  =\mathbb{E}%
[\frac{\lambda(T)}{\lambda(t_{0}+\rho)}\langle\xi z(T),z(T)\rangle+\int%
_{t_{0}+\rho}^{T}\frac{\lambda(s)}{\lambda(t_{0}+\rho)}\langle
f(s,P(s))z(s),z(s)\rangle ds|\mathcal{F}_{t_{0}+\rho}]\\
&  =\mathbb{E}[\frac{\lambda(T)}{\lambda(t_{0}+\rho)}\langle\xi
z(T),z(T)\rangle+\int_{t_{0}}^{T}\frac{\lambda(s)}{\lambda(t_{0}+\rho)}\langle
f(s,P(s))z(s),z(s)\rangle ds|\mathcal{F}_{t_{0}+\rho}]\\
&  \ \ \ \, -\int_{t_{0}}^{t_{0}+\rho}\frac{\lambda(s)}{\lambda(t_{0}+\rho
)}\langle f(s,P(s))z(s),z(s)\rangle ds.
\end{align*}
Taking $\mathcal{F}_{t}$-conditional expectation on both sides, we then get
\begin{equation}%
\begin{split}
&  \mathbb{E}[\frac{\lambda(t_{0}+\rho)}{\lambda(t)}\langle P(t_{0}%
+\rho)z(t_{0}+\rho),z(t_{0}+\rho)\rangle|\mathcal{F}_{t}]=\mathbb{E}%
[\frac{\lambda(T)}{\lambda(t)}\langle\xi z(T),z(T)\rangle\\
&  \quad+\int_{t_{0}}^{T}\frac{\lambda(s)}{\lambda(t)}\langle
f(s,P(s))z(s),z(s)\rangle ds|\mathcal{F}_{t}]-\mathbb{E}[\int_{t_{0}}%
^{t_{0}+\rho}\frac{\lambda(s)}{\lambda(t)}\langle f(s,P(s))z(s),z(s)\rangle
ds|\mathcal{F}_{t}].
\end{split}
\label{Myeq4-9}%
\end{equation}

We can write (\ref{Myeq4-4}) and (\ref{Myeq4-9}) into a unified form as
\begin{equation}%
\begin{split}
&  \langle P(t)\sqrt{\rho}z(t),\sqrt{\rho}z(t)\rangle+\mathbb{E}[\int_{t}%
^{T}\frac{\lambda(t_{0}+\rho)}{\lambda(t)}\langle P(t_{0}+\rho)z(t_{0}%
+\rho),z(t_{0}+\rho)\rangle I_{E_{\rho}}(s)ds|\mathcal{F}_{t}]\\
&  =\mathbb{E}[\frac{\lambda(T)}{\lambda(t)}\langle\xi\sqrt{\rho}%
z(T),\sqrt{\rho}z(T)\rangle+\int_{t}^{T}\frac{\lambda(s)}{\lambda(t)}\langle
f(s,P(s))\sqrt{\rho}z(s),\sqrt{\rho}z(s)\rangle ds|\mathcal{F}_{t}]\\
&  \ \ \ \,-\rho\mathbb{E}[\int_{t}^{T}\frac{\lambda(s)}{\lambda(t)}\langle
f(s,P(s))z(s),z(s)\rangle I_{E_{\rho}}(s)ds|\mathcal{F}_{t}],\quad
\text{when}\ \rho\ \text{is small.}%
\end{split}
\label{Myeq4-15}%
\end{equation}

\textit{Step 2: the case of $\zeta(s)=\zeta_{0},$ $s\geq t_{0},$ for some
$\zeta_{0}\in L^{4\alpha}(\mathcal{F}_{t_{0}},H)$.} In this case, we denote
the corresponding $\sigma$ by $\sigma^{t_{0},\zeta_{0}}.$

Assume first that $\zeta_{0}\in L^{4\alpha}(\mathcal{F}_{t_{0}},V)$ and define
the corresponding $z(t)$ as in Step 1$.$ From the identity (\ref{Myeq4-15}),
we have that
\begin{align*}
&  \sigma^{t_{0},\zeta_{0}}(t) =\{\mathbb{E}[\frac{\lambda(T)}{\lambda
(t)}\langle\xi x(T),x(T)\rangle|\mathcal{F}_{t}]-\mathbb{E}[\frac{\lambda
(T)}{\lambda(t)}\langle\xi\sqrt{\rho}z(T),\sqrt{\rho}z(T)\rangle
|\mathcal{F}_{t}]\}\\
&  \ \ \ +\{\mathbb{E}[\int_{t}^{T}\frac{\lambda(s)}{\lambda(t)}\langle
f(s,P(s))x(s),x(s)\rangle ds|\mathcal{F}_{t}]-\mathbb{E}[\int_{t}^{T}%
\frac{\lambda(s)}{\lambda(t)}\langle f(s,P(s))\sqrt{\rho}z(s),\sqrt{\rho
}z(s)\rangle ds|\mathcal{F}_{t}]\}\\
&  \ \ \ +\{\mathbb{E}[\int_{t}^{T}\frac{\lambda(t_{0}+\rho)}{\lambda
(t)}\langle P(t_{0}+\rho)z(t_{0}+\rho),z(t_{0}+\rho)\rangle I_{E_{\rho}%
}(s)ds|\mathcal{F}_{t}]-\mathbb{E}[\int_{t}^{T}\frac{\lambda(s)}{\lambda
(t)}\langle P(s)\zeta_{0},\zeta_{0}\rangle I_{E_{\rho}}(s)ds|\mathcal{F}%
_{t}]\}\\
&  \ \ \ +\{\langle P(t)\sqrt{\rho}z(t),\sqrt{\rho}z(t)\rangle-\langle
P(t)x(t),x(t)\rangle\}+\rho\mathbb{E}[\int_{t}^{T}\frac{\lambda(s)}%
{\lambda(t)}\langle f(s,P(s))z(s),z(s)\rangle I_{E_{\rho}}(s)ds|\mathcal{F}%
_{t}]\\
&  \ \ \ =:I_{1}+I_{2}+I_{3}+I_{4}+I_{5},\quad\text{when}\ \rho\ \text{is
small.}%
\end{align*}

We only provide the estimates for $I_{1}$, $I_{3}$ and $I_{5}$, the other
terms can be handled in a similar manner. For notational simplicity, we use
$C_{1}$ to denote a constant independent of $\rho$, which may vary from line
to line$.$ For the $I_{1}$ term, let $\alpha^{\prime}$ be the H\"{o}lder
conjugate of $\alpha$, since $\lambda$ is an exponential martingale, we have
\[%
\begin{split}
&  \mathbb{E}[|\frac{\lambda(T)}{\lambda(t)}||\langle\xi x(T),x(T)\rangle
-\langle\xi\sqrt{\rho}z(T),\sqrt{\rho}z(T)\rangle||\mathcal{F}_{t}]\\
&  \leq(\mathbb{E}[|\frac{\lambda(T)}{\lambda(t)}|^{\alpha^{\prime}%
}|\mathcal{F}_{t}])^{\frac{1}{\alpha^{\prime}}}(\mathbb{E}[|\langle\xi
x(T),x(T)\rangle-\langle\xi\sqrt{\rho}z(T),\sqrt{\rho}z(T)\rangle|^{\alpha
}|\mathcal{F}_{t}])^{\frac{1}{\alpha}}\\
&  \leq C_{1}(\mathbb{E}[|\langle\xi x(T),x(T)\rangle-\langle\xi\sqrt{\rho
}z(T),\sqrt{\rho}z(T)\rangle|^{\alpha}|\mathcal{F}_{t}])^{\frac{1}{\alpha}}.
\end{split}
\]
Thus in virtue of Proposition \ref{Myle4-7}, we obtain
\begin{align*}
(\mathbb{E}[|I_{1}|^{\alpha}])^{\frac{1}{\alpha}}  &  \leq C_{1}%
(\mathbb{E}[|\langle\xi x(T),x(T)\rangle-\langle\xi\sqrt{\rho}z(T),\sqrt{\rho
}z(T)\rangle|^{\alpha}])^{\frac{1}{\alpha}}\\
&  \leq C_{1}(\mathbb{E}[\Vert\xi\Vert_{\mathfrak{L}(H)}^{2\alpha}])^{\frac
{1}{2\alpha}}(\mathbb{E}[\Vert x(T)-\sqrt{\rho}z(T)\Vert_{H}^{4\alpha
}])^{\frac{1}{4\alpha}}\{(\mathbb{E}[\Vert x(T)\Vert_{H}^{4\alpha}])^{\frac
{1}{4\alpha}}+(\mathbb{E}[\Vert\sqrt{\rho}z(T)\Vert_{H}^{4\alpha}])^{\frac
{1}{4\alpha}}\}\\
&  \leq C_{1}\rho^{\frac{3}{2}}\\
&  =o_{\zeta_{0}}(\rho).
\end{align*}
Now we consider the $I_{3}$ term. If $t>t_{0},$ it holds trivially that
$I_{3}=0$ for $\rho$ small enough. Now we assume $t\leq t_{0}$. Denote
$t_{1}:=t_{0}+\rho$ for simplicity. Noting that $z(t_{1})=\frac{w(t_{1}%
)-w({t_{0}})}{\sqrt{\rho}}\zeta_{0}$, then from the It\^{o}'s isometry, we
have
\[
\mathbb{E}[\int_{t}^{T}\frac{\lambda(t_{0})}{\lambda(t)}\langle P(t_{0}%
)z(t_{1}),z(t_{1})\rangle I_{E_{\rho}}(s)ds|\mathcal{F}_{t}]=\mathbb{E}%
[\int_{t}^{T}\frac{\lambda(t_{0})}{\lambda(t)}\langle P(t_{0})\zeta_{0}%
,\zeta_{0}\rangle I_{E_{\rho}}(s)ds|\mathcal{F}_{t}].
\]
Thus,
\begin{align*}
I_{3}  &  =\{\mathbb{E}[\int_{t}^{T}\frac{\lambda(t_{1})}{\lambda(t)}\langle
P(t_{1})z(t_{1}),z(t_{1})\rangle I_{E_{\rho}}(s)ds|\mathcal{F}_{t}%
]-\mathbb{E}[\int_{t}^{T}\frac{\lambda(t_{0})}{\lambda(t)}\langle
P(t_{1})z(t_{1}),z(t_{1})\rangle I_{E_{\rho}}(s)ds|\mathcal{F}_{t}]\}\\
&  \ \ \ \,+\{\mathbb{E}[\int_{t}^{T}\frac{\lambda(t_{0})}{\lambda(t)}\langle
P(t_{1})z(t_{1}),z(t_{1})\rangle I_{E_{\rho}}(s)ds|\mathcal{F}_{t}%
]-\mathbb{E}[\int_{t}^{T}\frac{\lambda(t_{0})}{\lambda(t)}\langle
P(t_{0})z(t_{1}),z(t_{1})\rangle I_{E_{\rho}}(s)ds|\mathcal{F}_{t}]\}\\
&  \ \ \ \,+\{\mathbb{E}[\int_{t}^{T}\frac{\lambda(t_{0})}{\lambda(t)}\langle
P(t_{0})\zeta_{0},\zeta_{0}\rangle I_{E_{\rho}}(s)ds|\mathcal{F}%
_{t}]-\mathbb{E}[\int_{t}^{T}\frac{\lambda(s)}{\lambda(t)}\langle
P(s)\zeta_{0},\zeta_{0}\rangle I_{E_{\rho}}(s)ds|\mathcal{F}_{t}]\}\\
&  =:J_{1}+J_{2}+J_{3}.
\end{align*}
We only estimate $J_{2},$ and the other terms can be treated in the same way.
Still denote by $\alpha^{\prime}$ the H\"{o}lder conjugate of $\alpha$. Note
that
\begin{align*}
&  \mathbb{E}[\int_{t}^{T}|\frac{\lambda(t_{0})}{\lambda(t)}||\langle
P(t_{1})z(t_{1}),z(t_{1})\rangle-\langle P(t_{0})z(t_{1}),z(t_{1}%
)\rangle|I_{E_{\rho}}(s)ds|\mathcal{F}_{t}]\\
&  =\mathbb{E}[\int_{t}^{T}|\frac{\lambda(t_{0})}{\lambda(t)}\frac
{w(t_{1})-w({t_{0}})}{\sqrt{\rho}}||\langle P(t_{1})\zeta_{0},\zeta_{0}%
\rangle-\langle P(t_{0})\zeta_{0},\zeta_{0}\rangle|I_{E_{\rho}}%
(s)ds|\mathcal{F}_{t}]\\
&  \leq(\mathbb{E}[\int_{t}^{T}|\frac{\lambda(t_{0})}{\lambda(t)}\frac
{w(t_{1})-w({t_{0}})}{\sqrt{\rho}}|^{\alpha^{\prime}}I_{E_{\rho}%
}(s)ds|\mathcal{F}_{t}])^{\frac{1}{\alpha^{\prime}}}(\mathbb{E}[\int_{t}%
^{T}|\langle P(t_{1})\zeta_{0},\zeta_{0}\rangle-\langle P(t_{0})\zeta
_{0},\zeta_{0}\rangle|^{\alpha}I_{E_{\rho}}(s)ds|\mathcal{F}_{t}])^{\frac
{1}{\alpha}}\\
&  \leq C_{1}\rho^{\frac{1}{\alpha^{\prime}}}(\mathbb{E}[\int_{t}^{T}|\langle
P(t_{1})\zeta_{0},\zeta_{0}\rangle-\langle P(t_{0})\zeta_{0},\zeta_{0}%
\rangle)|^{\alpha}I_{E_{\rho}}(s)ds])^{\frac{1}{\alpha}}.
\end{align*}
Then by Proposition \ref{Myth2-7}, we have
\[
(\mathbb{E}[|J_{2}|^{\alpha}])^{\frac{1}{\alpha}}\leq C_{1}\rho^{\frac
{1}{\alpha^{\prime}}}(\mathbb{E}[\int_{t}^{T}|\langle P(t_{1})\zeta_{0}%
,\zeta_{0}\rangle-\langle P(t_{0})\zeta_{0},\zeta_{0}\rangle)|^{\alpha
}I_{E_{\rho}}(s)ds])^{\frac{1}{\alpha}}=o_{\zeta_{0}}(\rho).
\]
Thus,
\[
(\mathbb{E}[|I_{3}|^{\alpha}])^{\frac{1}{\alpha}}=o_{\zeta_{0}}(\rho).
\]

For the $I_{5}$ term, by a similar but simpler calculation,
\[
(\mathbb{E}[|I_{5}|^{\alpha}])^{\frac{1}{\alpha}}\leq C_{1}\rho^{2}%
=o_{\zeta_{0}}(\rho).
\]
Therefore,
\[
(\mathbb{E}[|\sigma^{t_{0},\zeta_{0}}(t)|^{\alpha}])^{\frac{1}{\alpha}%
}=o_{\zeta_{0}}(\rho).
\]

An approximation argument gives the result for the case of $\zeta_{0}\in
L^{4\alpha}(\mathcal{F}_{t_{0}},H)$. Indeed, for any $\delta>0,$ choose a
$\zeta_{0}^{\prime}\in L^{4\alpha}(\mathcal{F}_{t_{0}},V)$ such that
$\mathbb{E}[\Vert\zeta_{0}-\zeta_{0}^{\prime}\Vert_{H}^{4\alpha}]\leq\delta$
and let $x^{\prime}$ be the corresponding solution. Then
\begin{align*}
\sigma^{t_{0},\zeta_{0}}(t)  &  =\{\sigma^{t_{0},\zeta_{0}}(t)-\sigma
^{t_{0},\zeta_{0}^{\prime}}(t)\}+\sigma^{t_{0},\zeta_{0}^{\prime}}(t)\\
&  =\{\mathbb{E}[\frac{\lambda(T)}{\lambda(t)}\langle\xi x(T),x(T)\rangle
|\mathcal{F}_{t}]-\mathbb{E}[\frac{\lambda(T)}{\lambda(t)}\langle\xi
x^{\prime}(T),x^{\prime}(T)\rangle|\mathcal{F}_{t}]\}\\
&  \ \ \ \,+\{\mathbb{E}[\int_{t}^{T}\frac{\lambda(s)}{\lambda(t)}\langle
f(s,P(s))x(s),x(s)\rangle ds|\mathcal{F}_{t}]-\mathbb{E}[\int_{t}^{T}%
\frac{\lambda(s)}{\lambda(t)}\langle f(s,P(s))x^{\prime}(s),x^{\prime
}(s)\rangle ds|\mathcal{F}_{t}]\}\\
&  \ \ \ \,+\{\mathbb{E}[\int_{t}^{T}\frac{\lambda(s)}{\lambda(t)}\langle
P(s)\zeta_{0}^{\prime},\zeta_{0}^{\prime}\rangle I_{E_{\rho}}(s)ds|\mathcal{F}%
_{t}]-\mathbb{E}[\int_{t}^{T}\frac{\lambda(s)}{\lambda(t)}\langle
P(s)\zeta_{0},\zeta_{0}\rangle I_{E_{\rho}}(s)ds|\mathcal{F}_{t}]\}\\
&  \ \ \ \,+\{\langle P(t)x^{\prime}(t),x^{\prime}(t)\rangle-\langle
P(t)x(t),x(t)\rangle\}+\sigma^{t_{0},\zeta_{0}^{\prime}}(t)\\
&  =:K_{1}+K_{2}+K_{3}+K_{4}+\sigma^{t_{0},\zeta_{0}^{\prime}}(t).
\end{align*}
We only give the calculation of $K_{1}$, and the terms $K_{2}$, $K_{3},$
$K_{4}$ can be estimated similarly. From a similar analysis as for $I_{1}$, we
have for some constant $C_{2}$ independent of $\rho$ and $\zeta_{0}^{\prime}$
that
\begin{align*}
(\mathbb{E}[|K_{1}|^{\alpha}])^{\frac{1}{\alpha}}  &  \leq C_{2}%
(\mathbb{E}[\Vert x(T)-x^{\prime}(T)\Vert_{H}^{4\alpha}])^{\frac{1}{4\alpha}%
}\{(\mathbb{E}[\Vert x(T)\Vert_{H}^{4\alpha}])^{\frac{1}{4\alpha}}%
+(\mathbb{E}[\Vert x^{\prime}(T)\Vert_{H}^{4\alpha}])^{\frac{1}{4\alpha}}\}\\
&  \leq C_{2}(\mathbb{E}[\Vert\zeta_{0}-\zeta_{0}^{\prime}\Vert_{H}^{4\alpha
}])^{\frac{1}{4\alpha}}\rho.
\end{align*}
Therefore,
\[
(\mathbb{E}[|\sigma^{t_{0},\zeta_{0}}(t)|^{\alpha}])^{\frac{1}{\alpha}}\leq
C_{2}\delta^{\frac{1}{4\alpha}}\rho+o_{\zeta_{0}^{\prime}}(\rho),
\]
which can be written as
\[
\frac{1}{\rho}(\mathbb{E}[|\sigma^{t_{0},\zeta_{0}}(t)|^{\alpha}])^{\frac
{1}{\alpha}}\leq C_{2}\delta^{\frac{1}{4\alpha}}+o_{\zeta_{0}^{\prime}}(1).
\]
Letting $\rho\rightarrow0$ and utilizing the arbitrariness of $\delta$, we
obtain
\[
(\mathbb{E}[|\sigma^{t_{0},\zeta_{0}}(t)|^{\alpha}])^{\frac{1}{\alpha}}%
=o(\rho).
\]

\textit{Step 3: the general $\zeta$.} Let $x^{t_{0}}$ be the solution of SEE
(\ref{Myeq3-9}) corresponds to $\zeta^{\prime}$ satisfying $\zeta^{\prime
}(s)=\zeta(t_{0}),$ $s\geq t_{0},$ for each $t_{0}\in\lbrack0,T].$ From the
Lebesgue differentiation theorem (see also \cite[Theorem 2.2.9]{DU77}), we
have (for a.e. $t_{0}$)%
\[
\frac{1}{\rho}\int_{0}^{T}\mathbb{E}[\Vert\zeta(s)-\zeta(t_{0})\Vert
_{H}^{4\alpha}]I_{E_{\rho}}(s)ds=0,\quad\text{as}\ \rho\rightarrow0.
\]
From this we also get%
\[
\frac{1}{\rho^{2\alpha}}\mathbb{E}[\sup_{t\in\lbrack0,T]}\Vert x(t)-x^{t_{0}%
}(t)\Vert_{H}^{4\alpha}]\leq C\frac{1}{\rho}\int_{0}^{T}\mathbb{E}[\Vert
\zeta(s)-\zeta(t_{0})\Vert_{H}^{4\alpha}]I_{E_{\rho}}(s)ds=0,\quad
\text{as}\ \rho\rightarrow0.
\]
Therefore,%
\[
\int_{0}^{T}\mathbb{E}[\Vert\zeta(s)-\zeta(t_{0})\Vert_{H}^{4\alpha
}]I_{E_{\rho}}(s)ds=o(\rho)\quad\text{and}\quad\mathbb{E}[\sup_{t\in
\lbrack0,T]}\Vert x(t)-x^{t_{0}}(t)\Vert_{H}^{4\alpha}]=o(\rho^{2\alpha}).
\]
Noting that%
\begin{align*}
\sigma(t)  &  =\{\sigma(t)-\sigma^{t_{0},\zeta(t_{0})}(t)\}+\sigma
^{t_{0},\zeta(t_{0})}(t)\\
&  =\{\mathbb{E}[\frac{\lambda(T)}{\lambda(t)}\langle\xi x(T),x(T)\rangle
|\mathcal{F}_{t}]-\mathbb{E}[\frac{\lambda(T)}{\lambda(t)}\langle\xi x^{t_{0}%
}(T),x^{t_{0}}(T)\rangle|\mathcal{F}_{t}]\}\\
&  \ \ \ \,+\{\mathbb{E}[\int_{t}^{T}\frac{\lambda(s)}{\lambda(t)}\langle
f(s,P(s))x(s),x(s)\rangle ds|\mathcal{F}_{t}]-\mathbb{E}[\int_{t}^{T}%
\frac{\lambda(s)}{\lambda(t)}\langle f(s,P(s))x^{t_{0}}(s),x^{t_{0}}(s)\rangle
ds|\mathcal{F}_{t}]\}\\
&  \ \ \ \,+\{\mathbb{E}[\int_{t}^{T}\frac{\lambda(s)}{\lambda(t)}\langle
P(s)\zeta(t_{0}),\zeta(t_{0})\rangle I_{E_{\rho}}(s)ds|\mathcal{F}%
_{t}]-\mathbb{E}[\int_{t}^{T}\frac{\lambda(s)}{\lambda(t)}\langle
P(s)\zeta(s),\zeta(s)\rangle I_{E_{\rho}}(s)ds|\mathcal{F}_{t}]\}\\
&  \ \ \ \,+\{\langle P(t)x^{t_{0}}(t),x^{t_{0}}(t)\rangle-\langle
P(t)x(t),x(t)\rangle\}+\sigma^{t_{0},\zeta(t_{0})}(t),
\end{align*}
we can deduce by a similar analysis as in Step 2 that%
\[
(\mathbb{E}[|\sigma(t)|^{\alpha}])^{\frac{1}{\alpha}}\leq(\mathbb{E}%
[|\sigma(t)-\sigma^{t_{0},\zeta(t_{0})}(t)|^{\alpha}])^{\frac{1}{\alpha}%
}+(\mathbb{E}[|\sigma^{t_{0},\zeta(t_{0})}(t)|^{\alpha}])^{\frac{1}{\alpha}%
}=o(\rho).
\]

\end{proof}

\begin{lemma}
\label{Myle4-1} Suppose $(H4)$ holds. For $\mu_{1},\mu_{2}\in L_{\mathbb{F}%
}^{\infty}(0,T)$, define
\[
\tilde{A}(t):=A(t)+\mu_{1}(t)B(t)+\mu_{2}(t)I_{d},\quad\tilde{B}%
(t):=B(t)+\mu_{1}(t)I_{d}
\]
and
\[
\lambda_{1}(t):=e^{\int_{0}^{t}[\mu_{2}(s)-\frac12(\mu_{1}(s))^{2} ]ds+\mu
_{1}(s)dw({s})}.
\]
Then%
\[
L_{\tilde{A},\tilde{B}}(t,s)=\frac{\lambda_{1}(s)}{\lambda_{1}(t)}%
L_{A,B}(t,s),\quad\text{for}\ 0\leq t\leq s\leq T.
\]

\end{lemma}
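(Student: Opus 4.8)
The plan is to verify directly that, for a fixed $t$ and any $u_0\in L^2(\mathcal{F}_t,H)$, the process
\[
v(s):=\frac{\lambda_1(s)}{\lambda_1(t)}L^{A,B}(t,s)u_0,\quad s\in[t,T],
\]
solves the linear homogeneous SEE driven by $(\tilde{A},\tilde{B})$ with initial datum $u_0$ at time $t$, and then to invoke the uniqueness of solutions to such SEEs (see \cite{KR81}) to identify $v(s)$ with $L^{\tilde{A},\tilde{B}}(t,s)u_0$. Since the resulting identity holds for every $u_0\in L^2(\mathcal{F}_t,H)$, it yields the asserted operator equality.

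First I would record the scalar dynamics of $\lambda_1$. Writing $\lambda_1(s)=e^{X(s)}$ with $dX=(\mu_2-\tfrac12\mu_1^2)\,ds+\mu_1\,dw$, It\^o's formula gives
\[
d\lambda_1(s)=\lambda_1(s)\mu_2(s)\,ds+\lambda_1(s)\mu_1(s)\,dw(s).
\]
Next, abbreviating $u(s):=L^{A,B}(t,s)u_0$, which solves $du=A(s)u\,ds+B(s)u\,dw$ with $u(t)=u_0$, I would apply the It\^o product rule to the scalar-times-vector process $\lambda_1(s)u(s)$. The drift and diffusion parts combine in the obvious way, while the cross-variation term $d\langle\lambda_1,u\rangle=\lambda_1\mu_1\,B u\,ds$ is precisely what produces the extra first-order term $\mu_1 B$ in $\tilde{A}$. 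Collecting terms yields
\[
d(\lambda_1 u)=\lambda_1[(A+\mu_1 B+\mu_2 I_d)u]\,ds+\lambda_1[(B+\mu_1 I_d)u]\,dw=\lambda_1[\tilde{A}u]\,ds+\lambda_1[\tilde{B}u]\,dw.
\]
Dividing by the $\mathcal{F}_t$-measurable constant $\lambda_1(t)$ and using the linearity of $\tilde{A},\tilde{B}$ together with the scalar nature of $\lambda_1/\lambda_1(t)$, this shows $dv=\tilde{A}(s)v\,ds+\tilde{B}(s)v\,dw$ with $v(t)=u_0$, as desired.

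To close the argument I would check that $(\tilde{A},\tilde{B})$ again satisfies (H4), so that $L^{\tilde{A},\tilde{B}}$ is well defined and the uniqueness result applies. This is a routine verification: expanding $2\langle\tilde{A}u,u\rangle_{\ast}+\|\tilde{B}u\|_H^2$ reproduces the original coercive expression plus the perturbation $4\mu_1\langle Bu,u\rangle+(2\mu_2+\mu_1^2)\|u\|_H^2$, which is controlled by a multiple of $\|u\|_H^2$ thanks to the quasi-skew-symmetry bound (H4)(ii) and the boundedness of $\mu_1,\mu_2$; likewise $|\langle\tilde{B}u,u\rangle|\leq(K+\|\mu_1\|_{L^{\infty}})\|u\|_H^2$ and $\|\tilde{A}u\|_{V^{\ast}}\leq K'\|u\|_V$ (using $\|Bu\|_H\leq C\|u\|_V$, itself a consequence of the coercivity of $(A,B)$). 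The only genuinely technical point is the rigorous justification of the It\^o product formula in the Gelfand-triple/variational framework, where $u(\cdot)\in L^2_{\mathbb{F}}(t,T;V)\cap S^2_{\mathbb{F}}(t,T;H)$; since $\lambda_1$ is a scalar It\^o process, this reduces to the standard product rule for a real semimartingale against a variational solution, and the cross-variation computation above is the crux. Once this is in place, uniqueness of the $(\tilde{A},\tilde{B})$-SEE forces $v(s)=L^{\tilde{A},\tilde{B}}(t,s)u_0$, which is exactly the claimed identity.
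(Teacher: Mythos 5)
Your proposal is correct and follows essentially the same route as the paper's (very terse) proof: apply It\^o's product rule to $\lambda_1(s)L^{A,B}(t,s)u_0$, observe that the cross-variation produces the $\mu_1 B$ term so that the rescaled process solves the $(\tilde{A},\tilde{B})$-SEE with initial value $u_0$, and conclude by uniqueness of variational solutions from \cite{KR81}. You in fact supply details the paper omits, namely the explicit scalar dynamics of $\lambda_1$, the verification that $(\tilde{A},\tilde{B})$ again satisfies (H4) so that $L^{\tilde{A},\tilde{B}}$ is well defined, and the remark on justifying the product rule in the Gelfand-triple setting.
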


\begin{proof}
For any $u\in L^{2}(\mathcal{F}_{t},H),$ $\{L_{A,B}(t,s)u\}_{t\leq s\leq T}$
solves the SEE (\ref{Myeq2-17}) with initial value $u.$ Then by It\^{o}'s
formula, we see that the process $\{\frac{\lambda_{1}(s)}{\lambda_{1}%
(t)}L_{A,B}(t,s)u\}_{t\leq s\leq T}$ is the solution of SEE (\ref{Myeq2-17})
with unbounded operators $\tilde{A}$, $\tilde{B}$ and initial value $u$. Thus
$\frac{\lambda_{1}(s)}{\lambda_{1}(t)}L_{A,B}(t,s)u=L_{\tilde{A},\tilde{B}%
}(t,s)u$ and the proof is complete.
\end{proof}

\begin{proof}
[Proof of Theorem \ref{Myth3-7}]According to Theorem \ref{Myito2-12}, we have
\begin{align*}
\langle P(t)x(t),x(t)\rangle+\sigma(t)  &  =\mathbb{E}[\frac{\lambda
(T)}{\lambda(t)}\langle\xi x(T),x(T)\rangle+\int_{t}^{T}\frac{\lambda
(s)}{\lambda(t)}\langle f(s,P(s))x(s),x(s)\rangle ds\\
&  \ \ \ \,-\int_{t}^{T}\frac{\lambda(s)}{\lambda(t)}\langle P(s)\zeta
(s),\zeta(s)\rangle I_{E_{\rho}}(s)ds|\mathcal{F}_{t}]
\end{align*}
with $\sigma$ and $\lambda$ being defined by (\ref{Myeq4-16}) and
(\ref{Myeq4-29}) respectively, and $\sigma$ satisfying (\ref{Myeq2-16}). This
is in fact the explicit formula of the linear BSDE (\ref{Myeq3-5}) with
solution $(\langle P(t)x(t),x(t)\rangle+\sigma(t),\mathcal{Z}(t))\in
L_{\mathbb{F}}^{\alpha}(0,T)\times L_{\mathbb{F}}^{2,\alpha}(0,T)$. The
uniqueness of $(\sigma,\mathcal{Z)}$ in the equation (\ref{Myeq3-5}) and the
estimate (\ref{Myeq2-20}) follow directly from the basic theory of BSDEs.
\end{proof}

Now it remains to prove Proposition \ref{Myle4-7}. We shall need an a priori
estimate of SEEs when the non-homogeneous term $a$ in the drift taking values
in $V^{\ast}$. It is worth to mention that if particularly $a$ takes values in
$H$, we can in fact have a better version for such kind of estimate (see
(\ref{SEE-estimate})).

\begin{lemma}
\label{Myth4-4} 
Assume $(H4)$ holds. For any given  $(a,b)\in L_{\mathbb{F}}%
^{2,2\alpha}(t,T;V^{\ast}\times H)$ and $z_{0}\in L^{2\alpha}(\mathcal{F}%
_{t},H)$ with $\alpha\geq1$, denote by $z$ the solution of
\[%
\begin{cases}
{d}z(s) & =[A(s)z(s)+a(s)]{d}s+[B(s)z(s)+b(s)]{d}w(s),\quad s\in\lbrack
t,T],\\
z(t) & =z_{0}.
\end{cases}
\]
Then there is a constant $C>0$ depending on $\delta$, $K$ and $\alpha$
such that
\[
\mathbb{E}[\sup_{s\in\lbrack t,T]}\left\Vert z(s)\right\Vert _{H}^{2\alpha
}]\leq C\,\mathbb{E}[\Vert z_{0}\Vert_{H}^{2\alpha}+(\int_{t}^{T}\Vert
a(s)\Vert_{V^{\ast}}^{2}ds)^{\alpha}+(\int_{t}^{T}\Vert b(s)\Vert_{H}%
^{2}ds)^{\alpha}].
\]
\end{lemma}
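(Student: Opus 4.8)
The plan is to prove the moment estimate by the standard energy method for stochastic evolution equations in the Gelfand triple $V\subset H\subset V^{\ast}$, in the spirit of \eqref{SEE-estimate} and \cite[Lemma 3.1]{DM13}, the only novelty being that the $V^{\ast}$-valued forcing $a$ is absorbed by the coercivity dissipation rather than paired in $H$. First I would apply the It\^o formula for $\|z(s)\|_H^2$ valid in the variational setting (cf. \cite{KR81}) to get
\[
\|z(s)\|_H^2=\|z_0\|_H^2+\int_t^s\big[2\langle A(r)z(r)+a(r),z(r)\rangle_{\ast}+\|B(r)z(r)+b(r)\|_H^2\big]dr+M(s),
\]
where $M(s):=\int_t^s 2\langle z(r),B(r)z(r)+b(r)\rangle\,dw(r)$ is the martingale part and $\langle\cdot,\cdot\rangle$ is the $H$-inner product.

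Next I would estimate the drift integrand. The coercivity part of (H4)(i) gives $2\langle Az,z\rangle_{\ast}+\|Bz\|_H^2\le-\delta\|z\|_V^2+K\|z\|_H^2$, and combining this with $\|Az\|_{V^{\ast}}\le K\|z\|_V$ yields $\|Bz\|_H\le C\|z\|_V$ for $C=C(\delta,K)$. Expanding $\|Bz+b\|_H^2$, using the duality bound $|\langle a,z\rangle_{\ast}|\le\|a\|_{V^{\ast}}\|z\|_V$ with Young's inequality, and estimating the cross term $2\langle Bz,b\rangle\le\tfrac{\delta}{4}\|z\|_V^2+C\|b\|_H^2$, I would absorb all the $\|z\|_V^2$ contributions into the dissipation to obtain the energy inequality
\[
\|z(s)\|_H^2+\tfrac{\delta}{2}\int_t^s\|z(r)\|_V^2dr\le\|z_0\|_H^2+C\int_t^s\|z(r)\|_H^2dr+C\int_t^s\big(\|a(r)\|_{V^{\ast}}^2+\|b(r)\|_H^2\big)dr+M(s).
\]
This is exactly where the passage from the $H$-valued forcing of \eqref{SEE-estimate} to the present $V^{\ast}$-valued one forces the $L^2$-in-time $V^{\ast}$ norm $(\int\|a\|_{V^{\ast}}^2)^{\alpha}$ in place of the $L^1$-in-time $H$ norm.

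To reach the $2\alpha$-th moment I would drop the nonnegative dissipation term, take the supremum over $r\in[t,s]$, raise to the power $\alpha$ (using $(\sum_i c_i)^{\alpha}\le C\sum_i c_i^{\alpha}$) and take expectations. The martingale term is handled by the Burkholder--Davis--Gundy inequality, and here the quasi-skew-symmetry condition (H4)(ii), $|\langle Bz,z\rangle|\le K\|z\|_H^2$, is essential: it bounds the integrand of the quadratic variation purely in terms of the $H$-norm, giving $\langle M\rangle_s\le C\int_t^s(\|z\|_H^4+\|z\|_H^2\|b\|_H^2)\,dr$. Applying BDG and then Young's inequality in the form $\mathbb{E}[(\sup_{r\le s}\|z\|_H^2)^{\alpha/2}(\int_t^s\|z\|_H^2dr)^{\alpha/2}]\le\varepsilon\,\mathbb{E}[\sup_{r\le s}\|z\|_H^{2\alpha}]+C_{\varepsilon}\,\mathbb{E}[(\int_t^s\|z\|_H^2dr)^{\alpha}]$, the $\varepsilon$-term is moved to the left-hand side. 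After bounding $\mathbb{E}[(\int_t^s\|z\|_H^2dr)^{\alpha}]\le C\int_t^s\mathbb{E}[\sup_{\tau\le r}\|z(\tau)\|_H^{2\alpha}]dr$ by H\"older, I would close the argument with Gronwall's inequality applied to $g(s):=\mathbb{E}[\sup_{r\in[t,s]}\|z(r)\|_H^{2\alpha}]$, producing the claimed constant $C=C(\delta,K,\alpha)$.

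The main obstacle is making the absorption of the $\varepsilon\,\mathbb{E}[\sup_{r\le s}\|z\|_H^{2\alpha}]$ term rigorous, since this quantity is not a priori known to be finite. I would deal with this by a localization: introduce the stopping times $\tau_N:=\inf\{s\ge t:\|z(s)\|_H\ge N\}\wedge T$, run every estimate on $[t,\tau_N]$ so that all suprema are bounded and the absorption is legitimate, derive a bound uniform in $N$, and finally let $N\to\infty$ by monotone convergence. The one piece of genuine structural care is the use of (H4)(ii) to confine the martingale's quadratic variation to the $H$-norm, which is precisely what permits the stated conclusion to be free of any $V$-norm on the right-hand side.
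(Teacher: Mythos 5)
Your proposal is correct, and the two load-bearing estimates are exactly those of the paper's proof: (a) the drift bound obtained by combining coercivity with the duality estimate $|\langle a,z\rangle_{\ast}|\le\Vert a\Vert_{V^{\ast}}\Vert z\Vert_{V}$, the consequence $\Vert Bz\Vert_{H}\le C(\delta,K)\Vert z\Vert_{V}$ of (H4)(i), and Young's inequality to absorb everything into $-\delta\Vert z\Vert_{V}^{2}$ (this is precisely where the $L^2$-in-time $V^{\ast}$ norm of $a$ enters, as you note); and (b) the quasi-skew-symmetry bound $|\langle Bz+b,z\rangle|^{2}\le 2K^{2}\Vert z\Vert_{H}^{4}+2\Vert b\Vert_{H}^{2}\Vert z\Vert_{H}^{2}$, which confines the martingale's quadratic variation to the $H$-norm. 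Where you diverge is the moment-lifting machinery. The paper applies It\^{o}'s formula directly to $e^{-\gamma t}\Vert z(t)\Vert_{H}^{2\alpha}$, keeping $\gamma$ as a free parameter: after BDG and Young, the dangerous terms are $C_3(\varepsilon+\varepsilon^{2})\mathbb{E}[\sup_t e^{-\gamma t}\Vert z(t)\Vert_{H}^{2\alpha}]$ and $(C(\varepsilon)+\frac{C_3}{\varepsilon})\mathbb{E}[\int_0^T e^{-\gamma t}\Vert z(t)\Vert_{H}^{2\alpha}dt]$, and choosing $\varepsilon$ small then $\gamma$ large absorbs both into the left-hand side at once, so no Gronwall argument is needed. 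You instead apply It\^{o} to $\Vert z\Vert_{H}^{2}$, take the supremum, raise to the power $\alpha$, and close with Gronwall plus an explicit localization by the stopping times $\tau_N$. Your route is slightly more elementary (it avoids the It\^{o} expansion of the $2\alpha$-th power, with its $2\alpha(\alpha-1)$ second-order terms), at the cost of the Gronwall/localization layer; conversely, the paper's exponential-weight trick is shorter but its final absorption step, like yours, tacitly presupposes that $\mathbb{E}[\sup_t e^{-\gamma t}\Vert z(t)\Vert_{H}^{2\alpha}]<\infty$ — a point your stopping-time argument handles explicitly, which is a genuine (if minor) gain in rigor.
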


\begin{proof}
The proof is a variant of the one for (\ref{SEE-estimate}) in \cite{DM13}. We
only present the case of $t=0$, and the other cases can be proved in a similar
way. By the coercivity condition,
\begin{equation}
\Vert Bu\Vert_{H}\leq C(K)\Vert u\Vert_{V},\text{ for }u\in V. \label{Myeq4-5}%
\end{equation}
Then,
\begin{align*}
&  2\langle Az(t)+a(t),z(t)\rangle_{\ast}+\Vert Bz(t)+b(t)\Vert_{H}^{2}\\
&  \leq2\langle Az(t),z(t)\rangle_{\ast}+\Vert Bz(t)\Vert_{H}^{2}+2\langle
Bz(t),b(t)\rangle+\Vert b(t)\Vert_{H}^{2}+2\langle a(t),z(t)\rangle_{\ast}\\
&  \leq-\delta\Vert z(t)\Vert_{V}^{2}+K\Vert z(t)\Vert_{H}^{2}+C\Vert
z(t)\Vert_{V}\Vert b(t)\Vert_{H}+\Vert b(t)\Vert_{H}^{2}+2\Vert a(t)\Vert
_{V^{\ast}}\Vert z(t)\Vert_{V}\\
&  \leq-\delta\Vert z(t)\Vert_{V}^{2}+K\Vert z(t)\Vert_{H}^{2}+\frac{\delta
}{2}\Vert z(t)\Vert_{V}+C(\delta)\Vert b(t)\Vert_{H}^{2}+C(\delta)\Vert
a(t)\Vert_{V^{\ast}}^{2}\\
&  \leq C(\delta,K)(\Vert z(t)\Vert_{H}^{2}+\Vert a(t)\Vert_{V^{\ast}}
^{2}+\Vert b(t)\Vert_{H}^{2})
\end{align*}
and
\[
|\langle Bz(t)+b(t),z(t)\rangle|^{2}\leq2|\langle Bz(t),z(t)\rangle
|^{2}+2|\langle b(t),z(t)\rangle|^{2}\leq2K^{2}\Vert z(t)\Vert_{H}^{4}+2\Vert
b(t)\Vert_{H}^{2}\Vert z(t)\Vert_{H}^{2}
\]
Let $\varepsilon>0$ and $\gamma>0$ be undetermined. We have by the H\"{o}lder
inequality and the Young's inequality that
\begin{align*}
\mathbb{E}[\int_{0}^{T}e^{-\gamma t}\Vert z(t)\Vert_{H}^{2(\alpha-1)}\Vert
a(t)\Vert_{V^{\ast}}^{2}{d}t]  &  \leq\varepsilon^{2}\mathbb{E[}\sup
_{t\in\lbrack0,T]}e^{-\gamma t}\Vert z(t)\Vert_{H}^{2\alpha}]+C(\varepsilon
)\mathbb{E[}(\int_{0}^{T}e^{-\frac{\gamma t}{\alpha}}\Vert a(t)\Vert_{V^{\ast
}}^{2}{d}t)\,^{\alpha}]\\
&  \leq\varepsilon^{2}\mathbb{E[}\sup_{t\in\lbrack0,T]}e^{-\gamma t}\Vert
z(t)\Vert_{H}^{2\alpha}]+C(\varepsilon)\mathbb{E[}(\int_{0}^{T}\Vert
a(t)\Vert_{V^{\ast}}^{2}{d}t)\,^{\alpha}],
\end{align*}
and similarly,
\[
\mathbb{E}[\int_{0}^{T}e^{-\gamma t}\Vert z(t)\Vert_{H}^{2(\alpha-1)}\Vert
b(t)\Vert_{H}^{2}){d}t]\leq\varepsilon^{2}\mathbb{E[}\sup_{t\in\lbrack
0,T]}e^{-\gamma t}\Vert z(t)\Vert_{H}^{2\alpha}]+C(\varepsilon)\mathbb{E[}
(\int_{0}^{T}\Vert b(t)\Vert_{H}^{2}{d}t)\,^{\alpha}].
\]
In the sequel of this proof, for the sake of notation simplicity, we use
$C_{1}$ to denote a generic constant independent of $\varepsilon$ and $\gamma
$, which may be different from line to line. From the quasi-skew-symmetry
condition, we can calculate
\begin{align*}
&  \mathbb{E}[\sup_{t\in\lbrack0,T]}|\int_{0}^{t}e^{-\gamma s}\Vert
z(s)\Vert_{H}^{2(\alpha-1)}\langle Bz(s)+b(s),z(s)\rangle\,{d}w(s)|]\\
&  \leq C_{1}\mathbb{E[(}\int_{0}^{T}e^{-2\gamma t}\Vert z(t)\Vert
_{H}^{4\alpha-4}|\langle Bz(t)+b(t),z(t)\rangle|^{2}\,{d}t)^{\frac{1}{2}}]\\
&  \leq C_{1}\mathbb{E[}\sup_{t\in\lbrack0,T]}e^{-\frac{\gamma t}{2}}\Vert
z(t)\Vert_{H}^{\alpha}\mathbb{(}\int_{0}^{T}e^{-\gamma t}(\Vert z(t)\Vert
_{H}^{2\alpha}\,+\Vert z(t)\Vert_{H}^{2\alpha-2}\Vert b(t)\Vert_{H}^{2}
){d}t)^{\frac{1}{2}}]\\
&  \leq\varepsilon\mathbb{E[}\sup_{t\in\lbrack0,T]}e^{-\gamma t}\Vert
z(t)\Vert_{H}^{2\alpha}]+\frac{C_{1}}{\varepsilon}\mathbb{E[}\int_{0}
^{T}e^{-\gamma t}(\Vert z(t)\Vert_{H}^{2\alpha}\,+\Vert z(t)\Vert_{H}
^{2\alpha-2}\Vert b(t)\Vert_{H}^{2}){d}t]\\
&  \leq C_{1}\varepsilon\mathbb{E[}\sup_{t\in\lbrack0,T]}e^{-\gamma t}\Vert
z(t)\Vert_{H}^{2\alpha}]+\frac{C_{1}}{\varepsilon}\mathbb{E[}\int_{0}
^{T}e^{-\gamma t}\Vert z(t)\Vert_{H}^{2\alpha}\,{d}t]+C(\varepsilon
)\mathbb{E[}(\int_{0}^{T}\Vert b(t)\Vert_{H}^{2}{d}t)\,^{\alpha}].
\end{align*}
Then applying It\^{o} formula to $e^{-\gamma t}\Vert z(t)\Vert_{H}^{2\alpha}$,
we obtain
\begin{align*}
&  e^{-\gamma t}\Vert z(t)\Vert_{H}^{2\alpha}+\gamma\int_{0}^{t}e^{-\gamma
s}\Vert z(s)\Vert_{H}^{2\alpha}\,{d}s\\
&  =\Vert z_{0}\Vert_{H}^{2\alpha}+\alpha\int_{0}^{t}e^{-\gamma s}\Vert
z(s)\Vert_{H}^{2(\alpha-1)}(2\langle Az(s)+a(s),z(s)\rangle_{\ast}+\Vert
Bz(s)+b(s)\Vert_{H}^{2})\,{d}s\\
&  +2\alpha(\alpha-1)\int_{0}^{t}e^{-\gamma s}\Vert z(s)\Vert_{H}
^{2(\alpha-2)}|\langle Bz(s)+b(s),z(s)\rangle|^{2}\,{d}s\\
&  +2\alpha\int_{0}^{t}e^{-\gamma s}\Vert z(s)\Vert_{H}^{2(\alpha-1)}\langle
Bz(s)+b(s),z(s)\rangle\,{d}w(s)\\
&  \leq\Vert z_{0}\Vert_{H}^{2}+C_{1}\int_{0}^{t}e^{-\gamma s}\Vert
z(s)\Vert_{H}^{2(\alpha-1)}(\Vert z(s)\Vert_{H}^{2}+\Vert a(s)\Vert_{V^{\ast}
}^{2}+\Vert b(s)\Vert_{H}^{2})\,{d}s\\
&  +C_{1}\int_{0}^{t}e^{-\gamma s}\Vert z(s)\Vert_{H}^{2(\alpha-2)}(\Vert
z(s)\Vert_{H}^{4}+\Vert b(s)\Vert_{H}^{2}\Vert z(s)\Vert_{H}^{2})\,{d}s\\
&  +2\alpha\int_{0}^{t}e^{-\gamma s}\Vert z(s)\Vert_{H}^{2(\alpha-1)}\langle
Bz(s)+b(s),z(s)\rangle\,{d}w(s)
\end{align*}
Taking supremum and expectation on both sides, we get
\begin{align*}
&  \mathbb{E}[\sup_{t\in\lbrack0,T]}e^{-\gamma t}\Vert z(t)\Vert_{H}^{2\alpha
}]+\gamma\mathbb{E}[\int_{0}^{T}e^{-\gamma t}\Vert z(t)\Vert_{H}^{2\alpha
}\,{d}t]\\
&  \leq C_{1}(\varepsilon+\varepsilon^{2})\mathbb{E[}\sup_{t\in\lbrack
0,T]}e^{-\gamma t}\Vert z(t)\Vert_{H}^{2\alpha}]+\mathbb{E}[\Vert z_{0}
\Vert_{H}^{2\alpha}]+C(\varepsilon)\mathbb{E}[\int_{0}^{t}e^{-\gamma t}\Vert
z(t)\Vert_{H}^{2\alpha}\,{d}s]\\
&  +C(\varepsilon)\mathbb{E}[\int_{0}^{T}(\Vert a(t)\Vert_{V^{\ast}}
^{2})\,^{\alpha}{d}t]+C(\varepsilon)\mathbb{E}[\int_{0}^{T}(\Vert
b(t)\Vert_{H}^{2})\,^{\alpha}{d}t]
\end{align*}
Choosing $\varepsilon$ small and $\gamma$ large, we obtain
\[
\mathbb{E}[\sup_{t\in\lbrack0,T]}\Vert z(t)\Vert_{H}^{2\alpha}]\leq
C\,\mathbb{E}[\Vert z_{0}\Vert_{H}^{2\alpha}+(\int_{0}^{T}\Vert a(t)\Vert
_{V^{\ast}}^{2}dt)^{\alpha}+(\int_{0}^{T}\Vert b(t)\Vert_{H}^{2}dt)^{\alpha
}].
\]
The proof is complete.
\end{proof}

\begin{proof}
[Proof of Proposition \ref{Myle4-7}]On $[t_{0},t_{0}+\rho],$ we denote
$\delta(t):=y(t)-\sqrt{\rho}\eta(t)$ and have%
\[
{d}\delta(t)=[A\delta(t)+\sqrt{\rho}A\eta(t)]\,{d}t+[B\delta(t)+\sqrt{\rho
}B\eta(t)]{d}w(t),\quad\delta(t_{0})=0.
\]
Note that from the coercivity condition,
\[
\Vert Bu\Vert_{H}\leq C(K)\Vert u\Vert_{V},\quad\text{for}\ u\in V.
\]
Then according to Lemma \ref{Myth4-4},
\begin{align*}
\mathbb{E}[\sup_{[t_{0},t_{0}+\rho]}\left\Vert \delta(t)\right\Vert
_{H}^{2\alpha}]  &  \leq C\rho^{\alpha}\mathbb{E}[(\int_{t_{0}}^{t_{0}+\rho
}\Vert A\eta(t)\Vert_{V^{\ast}}^{2}dt)^{\alpha}+(\int_{t_{0}}^{t_{0}+\rho
}\Vert B\eta(t)\Vert_{H}^{2}dt)^{\alpha}]\\
&  \leq C\rho^{\alpha}\mathbb{E}[(\int_{t_{0}}^{t_{0}+\rho}\Vert\eta
(t)\Vert_{V}^{2}dt)^{\alpha}]\\
&  =C\rho^{2\alpha-1}\int_{t_{0}}^{t_{0}+\rho}\mathbb{E}[\Vert\eta(t)\Vert
_{V}^{2\alpha}]dt\\
&  \leq C\,\mathbb{E}[\left\Vert \zeta_{0}\right\Vert _{V}^{2\alpha}%
]\rho^{2\alpha}.
\end{align*}
We also note that
\[
\mathbb{E}[\sup_{t\in\lbrack0,t_{0}]}\left\Vert y(t)-\sqrt{\rho}%
z(t)\right\Vert _{H}^{2\alpha}]=0,
\]
and from the basic estimate of SEEs,
\[
\mathbb{E}[\sup_{t\in\lbrack t_{0}+\rho,T]}\Vert y(t)-\sqrt{\rho}z(t)\Vert
_{H}^{2\alpha}]\leq C\mathbb{E}[\Vert y(t_{0}+\rho)-\sqrt{\rho}\eta(t_{0}%
+\rho)\Vert_{H}^{2\alpha}]\leq C\,\mathbb{E}[\left\Vert \zeta_{0}\right\Vert
_{V}^{2\alpha}]\rho^{2\alpha}.
\]
Combining the above analysis, we obtain the desired result.
\end{proof}

\subsection{Proof of the $L^\beta$-estimate (\ref{Myeq4-22}) of adjoint equations}
We shall give a general result for possible future  applications. We also note that the case of $\beta=2$ for the first-order equation has already proved in \cite{DM10}.

We consider the following backward stochastic evolution equation (BSEE)
\begin{equation}
	\left\{
	\begin{aligned}
		-dp(t)=  &  [\mathcal{M}(t)p(t)+\mathcal{N}(t)q(t)+f(p(t),q(t),t)]dt\\
		&  -q(t)dw(t),\quad t\in\lbrack0,T],\\
		p(T)=  &  \xi,
	\end{aligned}
	\right.  \label{bsee-11}%
\end{equation}
where $\xi$ is the terminal condition,
\[
\mathcal{M}:[0,T]\times\Omega\rightarrow\mathcal{L}(V,V^{\ast}%
),\,\,\,\mathcal{N}:[0,T]\times\Omega\rightarrow\mathcal{L}(H,V^{\ast})
\]
are unbounded operators and
\[
f:[0,T]\times\Omega\times H\times H\rightarrow H
\]
is a nonlinear function.

Given $\beta\geq2.$ We denote by $L_{\mathbb{F}}^{1,\beta}(0,T;H)$) the space
of $H$-valued progressively measurable processes $y(\cdot)$ with norm $\Vert
y\Vert_{L_{\mathbb{F}}^{1,\beta}(0,T;H)}=\{\mathbb{\mathbb{E}}[(\int_{0}%
^{T}\Vert y(t)\Vert_{H}{d}t)^{\beta}]\}^{\frac{1}{\beta}}$.

We impose the following assumptions.
\begin{description}
	\item[$(A)$] For each $u\in V,$ $\mathcal{M}(t,\omega)u$ and $\mathcal{N}
	(t,\omega)u$ are progressively measurable. There exist some constants
	$\delta>0$ and $K\geq0$ such that the following two assertions hold: for each
	$(t,\omega)\in\lbrack0,T]\times\Omega$ and $x\in V$,
	\begin{description}
		\item  [\rm{{{(1)}}}]
		 Coercivity condition:
		\[
		2\left\langle \mathcal{M}(t)x,x\right\rangle _{\ast}+\Vert\mathcal{N}^{\ast
		}(t)x\Vert_{H}^{2}\leq-\delta\Vert x\Vert_{V}^{2}+K\Vert x\Vert_{H}%
		^{2}\ \text{and }\Vert\mathcal{M}(t)x\Vert_{V^\ast}\leq K\Vert x\Vert_{V};
		\]

		\item[\rm{{{(2)}}}] For each $(p,q)\in H\times H,$ $f(\cdot,\cdot,p,q)$
		are progressively measurable. $f(\cdot,\cdot,0,0)\in L_{\mathbb{F}}^{1,\beta
		}(0,T;H)$, $\xi\in L^{\beta}(\mathcal{F}_{T},H)$, and
		\[
		\Vert f(t,p,q)-f(t,p^{\prime},q^{\prime})\Vert_{H}\leq K(\Vert p-p^{\prime
		}\Vert_{H}+\Vert q-q^{\prime}\Vert_{H}).
		\]
		
	\end{description}
\end{description}

\begin{lemma}
	\label{Le-11} Assume the condition $(A)$. If $(p(\cdot),q(\cdot))$ is the
	solution to BSEE (\ref{bsee-11}), then there exists some positive constant $C$
	depending on $\delta$ and $K$ that
	\begin{align*}
		&  \mathbb{E}[\sup_{t\in\lbrack0,T]}\left\Vert p(t)\right\Vert _{H}^{\beta
		}]+\mathbb{E}[(\int_{0}^{T}\left\Vert p(t)\right\Vert _{V}^{2}dt)^{\frac
			{\beta}{2}}]+\mathbb{E}[(\int_{0}^{T}\left\Vert q(t)\right\Vert _{H}
		^{2}dt)^{\frac{\beta}{2}}]\\
		&  \ \ \ \ \ \ \ \ \leq C\{\mathbb{E}[\Vert\xi\Vert_{H}^{\beta}]+\mathbb{E(}%
		\int_{0}^{T}\Vert f(t,0,0)\Vert_{H}dt)^{\beta}\}.
	\end{align*}
	
\end{lemma}

\begin{proof}
In the proof, we use $C>0$ to denote a generic constant that may change from
line to line. Applying the It\^{o} formula to $\Vert p(t)\Vert_{H}^{2},$ we
have
\begin{align*}
	\Vert p(t)\Vert_{H}^{2}+\int_{t}^{T}\Vert q(s)\Vert_{H}^{2}ds &  =\Vert
	\xi\Vert_{H}^{2}+2\int_{t}^{T}[\left\langle \mathcal{M}%
	(s)p(s),p(s)\right\rangle _{\ast}+\left\langle \mathcal{N}%
	(s)q(s),p(s)\right\rangle _{\ast}\\
	&  +\left\langle f(s,p(s),q(s)),p(s)\right\rangle _{H}]ds-2\int_{t}%
	^{T}\left\langle q(s),p(s)\right\rangle _{H}dw(s).
\end{align*}
Applying again the It\^{o} formula to $\Vert
p(t)\Vert_{H}^{\beta}=(\Vert p(t)\Vert_{H}^{2})^{\frac{\beta}{2}}$, we get%
\begin{align*}
	&  \Vert p(t)\Vert_{H}^{\beta}+\frac{1}{2}\beta\int_{t}^{T}\Vert p(s)\Vert
	_{H}^{\beta-2}\Vert q(s)\Vert_{H}^{2}dt+\int_{t}^{T}\beta(\frac{\beta}%
	{2}-1)\Vert p(s)\Vert_{H}^{\beta-4}|\left\langle p(s),q(s)\right\rangle
	_{H}|^{2}ds\\
	&  =\Vert\xi\Vert_{H}^{\beta}+\int_{t}^{T}\beta\Vert p(s)\Vert_{H}^{\beta
		-2}[\left\langle \mathcal{M}(s)p(s),p(s)\right\rangle _{\ast}+\left\langle
	\mathcal{N}(s)q(s),p(s)\right\rangle _{\ast}+\left\langle
	p(s),f(s,p(s),q(s))\right\rangle ]ds\\
	&  \ \ \ \,-\beta\int_{t}^{T}\Vert p(s)\Vert_{H}^{\beta-2}\left\langle
	p(s),q(s)\right\rangle _{H}dw(s).
\end{align*}
Making use of the coercivity condition, we obtain for some undetermined
$\varepsilon>0$ that
\begin{align*}
	&  \Vert p(t)\Vert_{H}^{\beta}+\frac{1}{2}\beta\int_{t}^{T}\Vert p(s)\Vert
	_{H}^{\beta-2}\Vert q(s)\Vert_{H}^{2}dt+\int_{t}^{T}\beta(\frac{\beta}%
	{2}-1)\Vert p(s)\Vert_{H}^{\beta-4}|\left\langle p(s),q(s)\right\rangle
	_{H}|^{2}ds\\
	&  \leq\Vert\xi\Vert_{H}^{\beta}+\int_{t}^{T}\frac{\beta}{2}\Vert
	p(s)\Vert_{H}^{\beta-2}[2\varepsilon K\Vert p(s)\Vert_{V}^{2}+(1+\varepsilon
	)(-\delta\Vert p(s)\Vert_{V}^{2}+K\Vert p(s)\Vert_{H}^{2})+\frac
	{1}{1+\varepsilon}\Vert q(s)\Vert_{H}^{2}\\
	&  \ \ \ \,+\frac{\varepsilon}{2}\Vert q(s)\Vert_{H}^{2}+C_{\varepsilon}\Vert
	p(s)\Vert_{H}^{2}]ds+\beta\sup_{s\in\lbrack0,T]}\Vert p(s)\Vert_{H}^{\beta
		-1}\int_{t}^{T}\Vert f(s,0,0)\Vert_{H}ds\\
	&  \ \ \ \,-\beta\int_{t}^{T}\Vert p(s)\Vert_{H}^{\beta-2}\left\langle
	p(s),q(s)\right\rangle _{H}dw(s)\\
	&  \leq\Vert\xi\Vert_{H}^{\beta}+\int_{t}^{T}\frac{\beta}{2}\Vert
	p(s)\Vert_{H}^{\beta-2}[(2\varepsilon K-(1+\varepsilon)\delta)\Vert
	p(s)\Vert_{V}^{2}+C_{\varepsilon}\Vert p(s)\Vert_{H}^{2}+\frac{1+\frac
		{\varepsilon}{2}+\frac{\varepsilon^{2}}{2}}{1+\varepsilon}\Vert q(s)\Vert
	_{H}^{2}]ds\\
	&  \ \ \ \,+\beta\sup_{s\in\lbrack0,T]}\Vert p(s)\Vert_{H}^{\beta-1}\int%
	_{t}^{T}\Vert f(s,0,0)\Vert_{H}ds-\beta\int_{t}^{T}\Vert p(s)\Vert_{H}%
	^{\beta-2}\left\langle p(s),q(s)\right\rangle _{H}dw(s).
\end{align*}
Choose $\varepsilon$ small enough so that $(2\varepsilon K-(1+\varepsilon
)\delta)<0$ and $\frac{1+\frac{\varepsilon}{2}+\frac{\varepsilon^{2}}{2}%
}{1+\varepsilon}<1$, we get
\begin{equation}%
	\begin{split}
		&  \Vert p(t)\Vert_{H}^{\beta}+\int_{t}^{T}\Vert p(s)\Vert_{H}^{\beta-2}\Vert
		q(s)\Vert_{H}^{2}ds+\int_{t}^{T}\Vert p(s)\Vert_{H}^{\beta-2}\Vert
		p(s)\Vert_{V}^{2}ds\leq C[\Vert\xi\Vert_{H}^{\beta}+\int_{t}^{T}\Vert
		p(s)\Vert_{H}^{\beta}ds\\
		&  \ \ \ \ \ \ \ \,+\sup_{s\in\lbrack0,T]}\Vert p(s)\Vert_{H}^{\beta-1}%
		\int_{t}^{T}\Vert f(s,0,0)\Vert_{H}ds]-C_{\beta}\int_{t}^{T}\Vert
		p(s)\Vert_{H}^{\beta-2}\left\langle p(s),q(s)\right\rangle _{H}dw(s).
	\end{split}
	\label{estimate-0}%
\end{equation}
Taking expectation on both sides, we obtain (from standard truncation
techniques, the stochastic
integral above can be assumed to be a martingale;  see the proof of Theorem 4.4.4 in \cite{Zhang17})
\begin{equation}
	\mathbb{E}[\Vert p(t)\Vert_{H}^{\beta}]+\mathbb{E[}\int_{t}^{T}\Vert
	p(s)\Vert_{H}^{\beta-2}\Vert q(s)\Vert_{H}^{2}ds]\leq C\mathbb{E[}\Vert
	\xi\Vert_{H}^{\beta}+\int_{t}^{T}\Vert p(s)\Vert_{H}^{\beta}ds+\sup
	_{t\in\lbrack0,T]}\Vert p(t)\Vert_{H}^{\beta-1}\int_{0}^{T}\Vert
	f(t,0,0)\Vert_{H}dt].\label{eq0-2}%
\end{equation}
Applying the Gronwall inequality, we obtain%
\[
\mathbb{E}[\Vert p(t)\Vert_{H}^{\beta}]\leq C\mathbb{E[}\Vert\xi\Vert
_{H}^{\beta}+\sup_{t\in\lbrack0,T]}\Vert p(t)\Vert_{H}^{\beta-1}\int_{0}%
^{T}\Vert f(t,0,0)\Vert_{H}dt].
\]
Plugging this back into (\ref{eq0-2}), we get%
\[
\mathbb{E}[\Vert p(t)\Vert_{H}^{\beta}]+\mathbb{E[}\int_{0}^{T}\Vert
p(t)\Vert_{H}^{\beta-2}\Vert q(t)\Vert_{H}^{2}dt]\leq C\mathbb{E[}\Vert
\xi\Vert_{H}^{\beta}+\sup_{t\in\lbrack0,T]}\Vert p(t)\Vert_{H}^{\beta-1}%
\int_{0}^{T}\Vert f(t,0,0)\Vert_{H}dt].
\]
Then by the Young's inequality, we obtain that for an undetermined $\delta>0$
that
\begin{equation}%
	\begin{split}
		&  \mathbb{E}[\Vert p(t)\Vert_{H}^{\beta}]+\mathbb{E[}\int_{0}^{T}\Vert
		p(t)\Vert_{H}^{\beta-2}\Vert q(t)\Vert_{H}^{2}dt]\\
		&  \leq C\mathbb{E[}\Vert\xi\Vert_{H}^{\beta}]+\delta\mathbb{E[}\sup
		_{t\in\lbrack0,T]}\Vert p(t)\Vert_{H}^{\beta}]+C_{\delta}\mathbb{E[}(\int%
		_{0}^{T}\Vert f(t,0,0)\Vert_{H}dt)^{\beta}].
	\end{split}
	\label{eq-11}%
\end{equation}
On the other hand, taking supremum and expectation on both sides of
(\ref{estimate-0}), we have
\begin{align*}
	\mathbb{E[}\sup_{t\in\lbrack0,T]}\Vert p(t)\Vert_{H}^{\beta}] &  \leq
	C\mathbb{E[}\Vert\xi\Vert_{H}^{\beta}+\int_{0}^{T}\Vert p(t)\Vert_{H}^{\beta
	}dt]+C\mathbb{E[}(\int_{0}^{T}\Vert f(t,0,0)\Vert_{H}dt)^{\beta}]+\frac{1}%
	{4}\mathbb{E[}\sup_{t\in\lbrack0,T]}\Vert p(t)\Vert_{H}^{\beta}]\\
	&  \ \ \ \,+C\mathbb{E}[\sup_{t\in\lbrack0,T]}|\int_{t}^{T}\Vert p(s)\Vert
	_{H}^{\beta-2}\left\langle p(s),q(s)\right\rangle _{H}dw(s)|].\\
	&  \leq C\mathbb{E[}\Vert\xi\Vert_{H}^{\beta}+\int_{0}^{T}\Vert p(t)\Vert
	_{H}^{\beta}dt]+C\mathbb{E[}(\int_{0}^{T}\Vert f(t,0,0)\Vert_{H}dt)^{\beta
	}]+\frac{1}{4}\mathbb{E[}\sup_{t\in\lbrack0,T]}\Vert p(t)\Vert_{H}^{\beta}]\\
	&  \ \ \ \,+C\mathbb{E[(}\int_{0}^{T}\Vert p(t)\Vert_{H}^{2\beta-2}\Vert
	q(t)\Vert_{H}^{2}dt)^{\frac{1}{2}}]\\
	&  \leq C\mathbb{E[}\Vert\xi\Vert_{H}^{\beta}+\int_{0}^{T}\Vert p(t)\Vert
	_{H}^{\beta}dt]+C\mathbb{E[}(\int_{0}^{T}\Vert f(t,0,0)\Vert_{H}dt)^{\beta
	}]+\frac{1}{4}\mathbb{E[}\sup_{t\in\lbrack0,T]}\Vert p(t)\Vert_{H}^{\beta}]\\
	&  \ \ \ \,+C\mathbb{E[}\sup_{t\in\lbrack0,T]}\Vert p(t)\Vert_{H}^{\frac
		{\beta}{2}}(\int_{t}^{T}\Vert p(t)\Vert_{H}^{\beta-2}\Vert q(t)\Vert_{H}%
	^{2}dt)^{\frac{\beta}{2}}]\\
	&  \leq C\mathbb{E[}\Vert\xi\Vert_{H}^{\beta}+\int_{0}^{T}\Vert p(t)\Vert
	_{H}^{\beta}dt]+C\mathbb{E[}(\int_{0}^{T}\Vert f(t,0,0)\Vert_{H}dt)^{\beta
	}]+\frac{1}{2}\mathbb{E[}\sup_{t\in\lbrack0,T]}\Vert p(t)\Vert_{H}^{\beta}]\\
	&  \ \ \ \,+C\mathbb{E[}(\int_{0}^{T}\Vert p(t)\Vert_{H}^{\beta-2}\Vert
	q(t)\Vert_{H}^{2}dt)^{\beta}].
\end{align*}
Thus,
\begin{equation}%
	\begin{split}
		&  \mathbb{E[}\sup_{t\in\lbrack0,T]}\Vert p(t)\Vert_{H}^{\beta}]\\
		&  \leq C\{\mathbb{E[}\Vert\xi\Vert_{H}^{\beta}+\int_{0}^{T}\Vert
		p(t)\Vert_{H}^{\beta}dt]+\mathbb{E[}(\int_{0}^{T}\Vert f(t,0,0)\Vert
		_{H}dt)^{\beta}]+\mathbb{E[}(\int_{0}^{T}\Vert p(t)\Vert_{H}^{\beta-2}\Vert
		q(t)\Vert_{H}^{2}dt)^{\beta}]\}.
	\end{split}
	\label{eq-12}%
\end{equation}
Then plugging (\ref{eq-11}) into (\ref{eq-12}), we get
\[
\mathbb{E[}\sup_{t\in\lbrack0,T]}\Vert p(t)\Vert_{H}^{\beta}]\leq
C\mathbb{E[}\Vert\xi\Vert_{H}^{\beta}]+C\delta\mathbb{E[}\sup_{t\in
	\lbrack0,T]}\Vert p(t)\Vert_{H}^{\beta}]+C_{\delta}\mathbb{E[}(\int_{0}%
^{T}\Vert f(t,0,0)\Vert_{H}dt)^{\beta}].
\]
Choosing $\delta$ small enough, we get
\begin{equation}
	\mathbb{E[}\sup_{t\in\lbrack0,T]}\Vert p(t)\Vert_{H}^{\beta}]\leq
	C\{\mathbb{E[}\Vert\xi\Vert_{H}^{\beta}]+\mathbb{E[}(\int_{0}^{T}\Vert
	f(t,0,0)\Vert_{H}dt)^{\beta}]\}.\label{eq-001}%
\end{equation}

Next, taking $\beta=2$ in (\ref{estimate-0}), we have
\begin{align*}
	&  \int_{t}^{T}\Vert q(s)\Vert_{H}^{2}ds+\int_{t}^{T}\Vert p(s)\Vert_{V}%
	^{2}ds\leq C\Vert\xi\Vert_{H}^{2}+C\int_{t}^{T}\Vert p(s)\Vert_{H}^{2}ds\\
	& \ \ \ \ \ \ \   +C\sup_{s\in\lbrack0,T]}\Vert p(s)\Vert_{H}\int_{t}^{T}\Vert f(s,0,0)\Vert
	_{H}dt-C_{2}\int_{t}^{T}\left\langle p(s),q(s)\right\rangle _{H}dw(s).
\end{align*}
Then
\begin{align*}
	&  \mathbb{E[}(\int_{0}^{T}\Vert q(t)\Vert_{H}^{2}dt)^{\frac{\beta}{2}%
	}]+\mathbb{E}[(\int_{0}^{T}\Vert p(t)\Vert_{V}^{2}dt)^{\frac{\beta}{2}}]\\
	&  \leq C\{\mathbb{E}[\Vert\xi\Vert_{H}^{\beta}]+\mathbb{E}[\int_{0}^{T}\Vert
	p(t)\Vert_{H}^{\beta}dt]+\mathbb{E}[(\int_{0}^{T}|\left\langle
	p(s),q(s)\right\rangle _{H}|^{2}dt)^{\frac{\beta}{4}}]\\
	&  \ \ \ \,+\mathbb{E}[\sup_{t\in\lbrack0,T]}\Vert p(t)\Vert_{H}^{\frac{\beta
		}{2}}(\int_{0}^{T}\Vert f(t,0,0)\Vert_{H}dt)^{\frac{\beta}{2}}]\}\\
	&  \leq C\{\mathbb{E}[\Vert\xi\Vert_{H}^{\beta}]+\mathbb{E}[\int_{0}^{T}\Vert
	p(t)\Vert_{H}^{\beta}dt]+\mathbb{E}[\sup_{t\in\lbrack0,T]}\Vert p(t)\Vert
	_{H}^{\frac{\beta}{2}}(\int_{0}^{T}\Vert q(t)\Vert_{H}^{2}dt)^{\frac{\beta}%
		{4}}]\\
	&  +\mathbb{E}[\sup_{t\in\lbrack0,T]}\Vert p(t)\Vert_{H}^{\frac{\beta}{2}%
	}(\int_{0}^{T}\Vert f(t,0,0)\Vert_{H}dt)^{\frac{\beta}{2}}]\}\\
	&  \leq C\{\mathbb{E}[\Vert\xi\Vert_{H}^{\beta}]+\mathbb{E}[\int_{0}^{T}\Vert
	p(t)\Vert_{H}^{\beta}dt]+\mathbb{E}[(\int_{0}^{T}\Vert f(t,0,0)\Vert
	_{H}dt)^{\beta}]+\mathbb{E}[\sup_{t\in\lbrack0,T]}\Vert p(t)\Vert_{H}^{\beta
	}]\}\\
	&  \ \ \ \,+\frac{1}{2}\mathbb{E}[(\int_{0}^{T}\Vert q(t)\Vert_{H}%
	^{2}dt)^{\frac{\beta}{2}}]
\end{align*}
From this and (\ref{eq-001}), we get
\[
\mathbb{E[}(\int_{0}^{T}\Vert q(t)\Vert_{H}^{2}dt)^{\frac{\beta}{2}%
}]+\mathbb{E}[(\int_{0}^{T}\Vert p(t)\Vert_{V}^{2}dt)^{\frac{\beta}{2}}]\leq
C\mathbb{E[}\Vert\xi\Vert_{H}^{\beta}+(\int_{0}^{T}\Vert f(t,0,0)\Vert
_{H}dt)^{\beta}].
\]
This completes the proof.
\end{proof}

On the other hand,  the estimate $\sup_{t\in\lbrack0,T]}\mathbb{E}[\Vert P(t)\Vert_{\mathfrak{L}%
	(H)}^{\beta}]<\infty$ for any $\beta\geq2$ follows trivially from the estimate (2.14) of
the BSIE. Indeed, from (2.14) we have
\[
\Vert P(t)\Vert_{\mathfrak{L}(H)}^{\beta}\leq C\mathbb{E}[\Vert\xi
\Vert_{\mathfrak{L}(H)}^{\beta}+(\int_{t}^{T}\Vert f(s,0)\Vert_{\mathfrak{L}%
	(H)}^{2}ds)^{\frac{\beta}{2}}|\mathcal{F}_{t}],\quad P\text{-a.s}.
\]
Taking expectation on both sides, we obtain
\[
\mathbb{E}[\Vert P(t)\Vert_{\mathfrak{L}(H)}^{\beta}]\leq C\mathbb{E}[\Vert
\xi\Vert_{\mathfrak{L}(H)}^{\beta}+(\int_{0}^{T}\Vert f(t,0)\Vert
_{\mathfrak{L}(H)}^{2}dt)^{\frac{\beta}{2}}],\quad\text{for each }t\in\lbrack0,T].
\]

\noindent\textbf{Acknowledgments.} The first author would like to thank
Professor Kai Du, Shanghai Center for Mathematical Sciences at Fudan
University, for helpful discussions. The first author also thanks Doctor Ruoyang Liu, Professor Qingxin Meng, Professore Falei Wang and
Professor Tianxiao Wang for valuable comments.

\end{document}